\title{Linkage axioms for generic tropical oriented matroids}
\author{Yuan Yao}
\begin{document}

\begin{abstract}
    We present a comprehensive overview of Ardila and Develin's (generic) tropical oriented matroids, as well as many related objects and their axiomatics. Moreover, we use a unifying framework that elucidates the connections between these objects, including several new cryptomorphisms and properties.
\end{abstract}

\maketitle

\section{Introduction}\label{sec:intro}

\emph{Tropical oriented matroids} are a combinatorial abstraction of \emph{tropical hyperplane arrangements}, first defined by Ardila and Develin (\cite{TropicalOM}). Over the past few decades, these objects have been shown to be in bijection with several other geometric objects:

\begin{itemize}
    \item Subdivisions of product of two simplices (\cite{TropicalOM, TriangulationTOM, TropoRep}),
    \item Mixed subdivisions of a dilated simplex (via the \emph{Cayley trick}, see \cite{CayleyZono, CayleyTri}), and
    \item Tropical pseudo-hyperplane arrangements (\cite{TropoRep}).
\end{itemize}

In the \emph{generic} case, where the corresponding tropical (pseudo-)hyperplanes intersect generically, these matroids biject to \emph{triangulations} of products of two simplices and \emph{fine} mixed subdivisions of dilated simplices, the former of which have been of interest in algebraic geometry and topology (see \cite{Discriminants}, Section 7.3.D for further background).

Moreover, generic tropical oriented matroids are closely related to \emph{matching fields} from the study of maximal minors of a matrix (\cite{MaxMinors, Fields}), which in turn motivated the construction of several combinatorial objects with different axiomatics, including \emph{matching ensembles} (\cite{Ensembles}), \emph{tope arrangements} (\cite{FieldLattice}), and \emph{trianguloids} (\cite{Trianguloids}). Notably, these objects all involve some variation of the \emph{linkage axiom} from matching fields, which has similarities to the exchange axiom for ordinary matroids.

The goal of this paper is two-fold. First, we collect and synthesize the existing literature on the combinatorial properties of the aforementioned objects, and present them in a more consistent fashion for ease of understanding and comparison (\cref{sec:prelim}). Second, we extend our current understanding of the axiomatics of these objects by filling in several missing connections, most notably: \begin{itemize}
    \item Extended tope arrangements and pre-trianguloids are cryptomorphic objects (\cref{sec:operations}),
    \item A corrected proof showing that matching ensembles and generic tropical oriented matroids are cryptomorphic objects (\cref{sec:equivaxiom}), and
    \item Some new properties of fine mixed subdivisions of dilated simplices (\cref{sec:treelinkage}).
\end{itemize}
As a consequence, we obtain many more equivalent axiom systems that define a generic tropical oriented matroid, many of which can be phrased as some version of the linkage axiom (summarized in \cref{sec:bigpicture}).

\section{Preliminaries}\label{sec:prelim}


Let $n$ and $d$ be positive integers. Throughout this paper, we will use a bar for indices $\bar{j} \in [\bar{d}]$ and index sets $\bar{J}\subseteq [\bar{d}]$ that correspond to axes of $\mathbb{R}^d$, to avoid confusion with indices $i\in [n]$ and index sets $I\subseteq [n]$.\footnote{This notation is reversed from the one used in \cite{Trianguloids}.}

Let $\Delta^{d-1}$ denote the $(d-1)$-dimensional standard unit simplex, whose vertices are the standard unit vectors $e_{\bar{1}}, e_{\bar{2}}, \dots, e_{\bar{d}} \in \mathbb{R}^d$. 

\subsection{Fine mixed subdivisions of a dilated simplex}

Mixed subdivisions are a generalization of zonotope tilings that apply to any Minkowski sums of polytopes.

\begin{definition}[\cite{CayleyTri}, Section 1.2]
    Given a Minkowski sum $P = \sum_{i=1}^n P_i$, a \emph{Minkowski cell} is a full-dimensional polytope $C$ of the form $\sum_{i=1}^n C_i$, where the vertices of $C_i$ is a nonempty subset of that of $P_i$. It is \emph{fine} if it cannot be subdivided into smaller Minkowski cells.

    A polyhedral subdivision of $P$ into (fine) Minkowski cells is called a \emph{(fine) mixed subdivision} of $P$ if for any two cells $C = \sum_{i=1}^n C_i$ and $C' = \sum_{i=1}^n C'_i$, we have $C \cap C' = \sum_{i=1}^n (C_i \cap C'_i)$, and $C_i \cap C'_i$ is a common face of $C_i$ and $C'_i$. (In other words, $C$ and $C'$ \emph{intersect properly as Minkowski cells}.)
\end{definition}

In this paper we are solely interested in the case where $P_i = \Delta^{d-1}$ for $i = 1, \dots, n$, so $P$ is a dilated simplex $n\Delta^{d-1}$. In this case, every Minkowski cell $C$ can be encoded via a subgraph $G(C)$ of the complete bipartite graph $K_{n, d}$ (whose vertex set is $[n] \sqcup [\bar{d}])$, where $(i, \bar{j})$ is an edge of $G(C)$ if and only if $e_{\bar{j}}$ is a vertex of $C_i$. We will refer to $[n]$ as the \emph{left side} and $[\bar{d}]$ as the \emph{right side}. 

\begin{remark}
    It is easy to label each face $F$ of a Minkowski cell $C$ with a unique Minkowski sum decomposition $\sum_{i=1}^n F_i$, where each $F_i$ is a (nonempty) face of $C_i$. In a mixed subdivision, the condition in the definition ensures that a face shared by two or more cells receives the same label no matter which cell we choose.

    In the case where $P = n\Delta^{d-1}$, we can also represent each face $F$ with a subgraph $G(F)$. It is not difficult to see that $F$ is a face of $C$ if and only if $G(F)\subseteq G(C)$ (see also \cite{TriangulationTOM}, Remark 2.3).
\end{remark}

The definition of fine mixed subdivisions of $n\Delta^{d-1}$ can now be translated into purely combinatorial conditions on a collection of subgraphs:

\begin{definition}
    Two acyclic subgraphs $G$ and $G'$ of $K_{n, d}$ are \emph{compatible} with each other if whenever they both contain a perfect matching between two sets of vertices $I\subseteq [n]$ and $\bar{J}\subseteq [\bar{d}]$, the two matchings are the same.
\end{definition}

There is an equivalent formulation for compatibility which will be used for some proofs. 

\begin{definition}
    For two forests $G$ and $G'$ of $K_{n,d}$, let the \emph{compatibility graph} $U(G, G')$ be the directed graph on $[n]\sqcup [\bar{d}]$ consisting of all edges in $G$ directed from $[n]$ to $[\bar{d}]$ and all edges in $G'$ directed from $[\bar{d}]$ to $[n]$.\footnote{A similar graph called \emph{comparability graph} $CG_{G, G'}$ is defined in \cite{TropicalOM}, Definition 3.2, which is defined on the vertex set $[\bar{d}]$ and for not necessarily acyclic graphs. This graph give rise to the same compatibility condition when specialized to forests.} 
\end{definition}

\begin{lemma}[\cite{Permutohedra}, Lemma 12.6 and \cite{Trianguloids}, Lemma 6.1]    
    Two forests are compatible if and only if their compatibility graph does not contain a simple directed cycle of length at least 4. We call such a cycle a \emph{nontrivial cycle}.
\end{lemma}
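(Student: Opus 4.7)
I would prove both directions of the equivalence by converting between simple directed cycles in $U(G, G')$ and disagreements between perfect matchings in $G$ and $G'$, using the classical symmetric-difference argument for matchings.

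\textbf{Cycle implies incompatibility.} Suppose $U(G, G')$ contains a simple directed cycle $C$ of length $2k \ge 4$. Since the arcs of $U(G, G')$ all run between $[n]$ and $[\bar d]$, the vertices of $C$ alternate sides; write them as $I \subseteq [n]$ and $\bar J \subseteq [\bar d]$, each of size $k$. The arcs of $C$ directed from $[n]$ to $[\bar d]$ are edges of $G$ and form a perfect matching $M$ between $I$ and $\bar J$; the arcs directed from $[\bar d]$ to $[n]$ are edges of $G'$ and form another perfect matching $M'$ between the same sets. Together $M$ and $M'$ contribute the $2k \ge 4$ distinct edges of $C$, so $M \ne M'$, contradicting compatibility on $(I, \bar J)$.

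\textbf{Incompatibility implies cycle.} Suppose $G$ and $G'$ are incompatible, and fix witnesses $I \subseteq [n]$, $\bar J \subseteq [\bar d]$ with distinct perfect matchings $M \subseteq G$ and $M' \subseteq G'$ between them. Because $M$ and $M'$ are both perfect matchings on $I \cup \bar J$, every vertex of $I \cup \bar J$ has degree $0$ or $2$ in the symmetric difference $M \triangle M'$, with degree-$2$ vertices incident to exactly one $M$-edge and one $M'$-edge. Hence $M \triangle M'$ decomposes as a disjoint union of $M/M'$-alternating cycles; these are bipartite and simple, so each has even length $\ge 4$, and at least one cycle exists since $M \ne M'$. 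Pick such a cycle, orient its $M$-edges from $[n]$ to $[\bar d]$ and its $M'$-edges from $[\bar d]$ to $[n]$; the alternation of $M$/$M'$-edges matches the $[n]/[\bar d]$ alternation of sides, so these orientations assemble into a simple directed cycle of length $\ge 4$ in $U(G, G')$.

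\textbf{Main obstacle.} No individual step is deep; the proof is essentially the symmetric-difference trick for perfect matchings, together with the observation that length-$2$ cycles in $U(G,G')$ correspond to edges shared by $G$ and $G'$ (which pose no obstruction to compatibility), explaining the "at least $4$" threshold. The one point that needs care is bookkeeping: verifying that the orientation convention on $U(G, G')$ is compatible with the alternation structure of $M \triangle M'$, so that a symmetric-difference cycle lifts to a \emph{directed} cycle rather than a walk that reverses direction. Since every edge of $G$ and of $G'$ already runs between the two sides of $K_{n,d}$, this alignment is automatic in both directions, and the argument goes through.
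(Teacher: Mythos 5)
Your proof is correct and uses the standard symmetric-difference argument; the paper only cites this lemma to \cite{Permutohedra} and \cite{Trianguloids} without reproducing a proof, so there is no internal argument to compare against. One step in the forward direction is worth making explicit: you conclude $M \neq M'$ from the cycle ``contributing $2k$ distinct edges,'' but this tacitly assumes that no two arcs of $C$ are the two orientations of the same undirected edge (which happens exactly when $G$ and $G'$ share that edge, producing a bidirectional pair in $U(G, G')$). The missing observation is that a simple directed cycle of length at least $3$ cannot traverse both $i \to \bar{j}$ and $\bar{j} \to i$, since that would force a repeated vertex; in the bipartite setting this is exactly why the threshold is ``length $\geq 4$'' rather than ``$\geq 2$.'' You describe this informally in your concluding paragraph, but it does real work in the forward implication and should be invoked there. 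The reverse direction, via the alternating-cycle decomposition of $M \triangle M'$, is complete as written; note also that it does not actually use acyclicity of $G$ or $G'$, so the lemma holds (with the same definition of compatibility) for arbitrary subgraphs, though only the forest case is needed here.
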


\begin{theorem}[\cite{FlagArr}, Proposition 7.2]\label{thm:FMSaxioms}
    A collection $\mathcal{G}$ of subgraphs of $K_{n, d}$ encodes a fine mixed subdivision of $n\Delta^{d-1}$ if and only if the following holds: \begin{enumerate}
        \item Each subgraph $G\in \mathcal{G}$ is a spanning tree (this is the equivalent condition for $G$ to encode a fine Minkowski cell).
        \item \textnormal{(Tree linkage)} For each tree $G\in \mathcal{G}$ and an internal edge $e\in G$ (i.e.\ an edge that is not adjacent to a leaf vertex), there exists another tree $G' \in \mathcal{G}$ and an edge $e'\in G'$ such that $G\setminus e = G' \setminus e'$.
        \item \textnormal{(Compatibility)} Any two trees of $\mathcal{G}$ are compatible with each other.
    \end{enumerate}
\end{theorem}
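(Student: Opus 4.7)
My plan is to translate the geometric definition of a fine mixed subdivision into the three combinatorial conditions by exploiting the bijection $C \leftrightarrow G(C)$ between Minkowski cells of $n\Delta^{d-1}$ and subgraphs of $K_{n,d}$, and then to show conversely that these conditions suffice to recover a subdivision.

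For the forward direction, assume $\mathcal{G} = \{G(C) : C \in \mathcal{C}\}$ for a fine mixed subdivision $\mathcal{C}$. Condition (1) is a standard fact: the Minkowski sum $C = \sum_i C_i$ of simplices indexed by the neighborhoods of $G$ is full-dimensional iff $G$ is connected, and a dimension count together with acyclicity (the requirement for fineness) forces $G$ to be a spanning tree of $K_{n,d}$. Condition (3) follows by interpreting a matching between $I \subseteq [n]$ and $\bar{J} \subseteq [\bar{d}]$ common to $G$ and $G'$ as a distinguished vertex of a shared face of $C$ and $C'$; the proper-intersection equality $C \cap C' = \sum_i(C_i \cap C'_i)$ applied to that face then forces the two matchings to agree. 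For (2), I would identify internal edges of the spanning tree $G$ with facets of $C$ lying in the interior of $n\Delta^{d-1}$ (leaf edges correspond to boundary facets instead); since $\mathcal{C}$ is a subdivision, each such interior facet is shared with a unique adjacent cell $C'$, and a local analysis of how two Minkowski cells meet across a common facet shows that $G(C')$ differs from $G$ by exactly one edge swap, giving the tree linkage relation $G \setminus e = G' \setminus e'$.

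For the converse, suppose $\mathcal{G}$ satisfies (1)--(3); each $G$ yields a fine Minkowski cell $C_G$ by (1), and compatibility, via the compatibility-graph reformulation, guarantees that any two such cells intersect properly. The remaining task --- showing that $\{C_G\}_{G \in \mathcal{G}}$ actually tiles $n\Delta^{d-1}$ --- is the main obstacle, and this is where tree linkage does its real work. I would argue it in two steps: first, (2) ensures that every interior facet of every $C_G$ is shared with another cell in the collection, so the union $\bigcup_{G \in \mathcal{G}} C_G$ has no ``free'' interior boundary inside $n\Delta^{d-1}$; second, combined with compatibility, this local matching of facets forces the cells to fill $n\Delta^{d-1}$ without overlap, which one verifies by starting from any single cell and propagating through interior flips via a volume/connectivity argument. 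The delicate point is that compatibility is exactly what keeps this gluing globally consistent --- without (3), distinct flip sequences starting from the same cell could produce distinct full-dimensional cells occupying the same region, so (2) and (3) genuinely cooperate rather than acting independently.
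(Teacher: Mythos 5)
The paper does not prove this theorem; it is imported as a citation to \cite{FlagArr}, Proposition 7.2, so there is no internal proof to compare your attempt against. Judging the proposal on its own terms: the forward direction is essentially right and mirrors the remark the paper places after the theorem statement (leaf edges $\leftrightarrow$ boundary facets, internal edges $\leftrightarrow$ interior facets shared with a unique adjacent cell, proper Minkowski intersection $\leftrightarrow$ compatibility). Your ``local analysis of how two Minkowski cells meet across a common facet'' is exactly what the paper's \cref{lem:adjtrees} formalizes, and the full-dimensional-iff-connected plus fine-iff-acyclic dimension count for condition (1) is standard.

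The converse is where the theorem's content lives, and there you have flagged the gap but not closed it. ``Propagating through interior flips via a volume/connectivity argument'' is a name for the proof, not the proof. Concretely, you would need: (i) the assumption that $\mathcal{G}$ is nonempty, without which the connectedness argument has no starting cell; (ii) a verification that the adjacent cell produced by tree linkage sits on the \emph{opposite} side of the shared facet from the original cell --- this is precisely what the paper's \cref{lem:adjtrees} extracts from compatibility, and without it a flip could land you back where you started, leaving a gap; and (iii) the actual topological covering step, e.g.\ showing that $U = \bigcup_{G\in\mathcal{G}} C_G$ meets $\mathrm{int}(n\Delta^{d-1})$ in a set that is both relatively open and closed (openness because any relative-interior facet point is covered from both sides by tree linkage plus (ii), plus a dimension-stratification or perturbation argument for lower-dimensional boundary points), so that connectedness of $\mathrm{int}(n\Delta^{d-1})$ forces $U = n\Delta^{d-1}$. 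You gesture at all of these, and the overall strategy is correct, but as written the converse remains a sketch rather than a proof.
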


We note that the $G'$ in the tree linkage axiom is necessarily unique:

\begin{lemma}\label{lem:adjtrees}
    Suppose there are two distinct spanning trees $G, G'$ of $K_{n,d}$ and two edges $e\in G$ and $e'\in G'$ such that $G\setminus e = G' \setminus e' = H$. Let the two connected components of $H$ be $H^{(1)} = I^{(1)}\sqcup \bar{J}^{(1)}$ and $H^{(2)} = I^{(2)}\sqcup \bar{J}^{(2)}$. The two trees $G$ and $G'$ are compatible if and only if either $e$ connects $I^{(1)}$ with $\bar{J}^{(2)}$ and $e'$ connects $I^{(2)}$ with $\bar{J}^{(1)}$, or vice versa. In particular, this also means that $e$ and $e'$ do not share vertices.
\end{lemma}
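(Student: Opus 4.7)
The plan is to apply the characterization from the preceding lemma: $G$ and $G'$ are compatible iff the compatibility graph $U(G, G')$ has no simple directed cycle of length at least $4$. In $U(G, G')$ every edge of the shared forest $H$ appears in both directions, while $e = (i, \bar{j})$ appears only as $i \to \bar{j}$ and $e' = (i', \bar{j}')$ only as $\bar{j}' \to i'$. Since $G$ and $G'$ are spanning trees, each of $e, e'$ must bridge the two components $H^{(1)}, H^{(2)}$ of $H$; in particular, the two endpoints of $e$ (respectively $e'$) lie in distinct components.

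The first step is to show that any nontrivial simple cycle $C$ in $U(G, G')$ must use both $e$ and $e'$. Since a simple directed cycle traverses each underlying undirected edge at most once, a cycle using only $H$-edges would descend to a simple cycle in the forest $H$, which is impossible. A cycle using $e$ but not $e'$ would require an $H$-path between the two endpoints of $e$, but those lie in distinct components of $H$; symmetrically for $e'$. Therefore $C$ must have the shape $i \to \bar{j} \to \cdots \to \bar{j}' \to i' \to \cdots \to i$, with the two ``$\cdots$'' segments being paths in $H$.

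Next I would show that such a $C$ exists if and only if $\bar{j}, \bar{j}'$ lie in the same component of $H$ and $i, i'$ lie in the same component. When both hold, the two connecting paths are unique (since $H$ is a forest) and vertex-disjoint (lying in opposite components), so $C$ is simple; and since $e \neq e'$ (because $G \neq G'$), a short parity check shows $C$ has length at least $4$. Conversely, if either of the ``same-component'' conditions fails, no $H$-path is available to close the cycle.

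Finally, since each of $e, e'$ has its two endpoints in different $H$-components, translating the ``same-component'' criterion on the four endpoints back into endpoint configurations yields precisely the two ``parallel'' cases: $e$ and $e'$ both connecting $I^{(1)}$ to $\bar{J}^{(2)}$, or both connecting $I^{(2)}$ to $\bar{J}^{(1)}$. Negating gives the claimed equivalence, and the vertex-disjointness of $e, e'$ in the remaining two ``antiparallel'' configurations is then immediate from inspection of their endpoint sets. The whole argument is essentially bookkeeping once the cycle-characterization of compatibility is set up, so I don't expect a serious obstacle; the only delicate point is confirming that the candidate cycle really is simple and has length $\geq 4$, which is why I would keep the parity observation and the disjoint-components argument explicit.
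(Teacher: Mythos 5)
Your proof is correct and follows essentially the same route as the paper's: both reduce compatibility to the absence of a nontrivial directed cycle in $U(G,G')$, observe that any such cycle must use both $e$ and $e'$ (directed $i \to \bar{j}$ and $\bar{j}' \to i'$ respectively) joined by paths in the forest $H$, exhibit the cycle explicitly in the ``parallel'' case, and rule it out in the ``antiparallel'' case because the needed $H$-paths would have to cross between components. Your write-up is a bit more explicit about why the cycle is simple and has length at least $4$ (using bipartiteness and $e \neq e'$), details the paper leaves implicit, but the underlying argument is identical.
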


\begin{proof}
    Assume for the sake of contradiction and without loss of generality that both $e$ and $e'$ connects $I^{(1)}$ and $\bar{J}^{(2)}$. Suppose that $e = (i, \bar{j})$ and $e' = (i', \bar{j}')$ where $i, i'\in I^{(1)}$, $\bar{j}, \bar{j}'\in \bar{J}^{(2)}$, and at least one of $i\neq i'$ and $\bar{j}\neq \bar{j}'$ is true. The compatibility graph $U(G, G')$ contains bi-directional edges for all edges in $H^{(1)}$ and $H^{(2)}$, and hence contains a nontrivial cycle of the form $i\xrightarrow{e} \bar{j}\xrightarrow{H^{(2)}} \bar{j}' \xrightarrow{e'} i' \xrightarrow{H^{(1)}} i$, contradicting compatibility.

    On the other hand, observe that if (without loss of generality) $e$ connects $I^{(1)}$ with $\bar{J}^{(2)}$ and $e'$ connects $I^{(2)}$ with $\bar{J}^{(1)}$, then both edges between $H^{(1)}$ and $H^{(2)}$ in $U(G, G')$ are directed from $H^{(1)}$ to $H^{(2)}$, and hence it cannot contain a nontrivial cycle.
\end{proof}

As a result, there cannot be three or more distinct spanning trees in $\mathcal{G}$ that all contain $H$ as a subgraph, implying that $G'$ is unique.

\begin{remark}
    Geometrically speaking, the tree linkage axiom says that for any facet of a cell that does not lie on the boundary of $n\Delta^{d-1}$, there is (uniquely) another cell that shares the facet. The compatibility axiom says that any two cells intersect properly as Minkowski cells.
\end{remark}

\begin{figure}[h!]
    \begin{center}
    \begin{minipage}{0.45\textwidth}
    \includegraphics{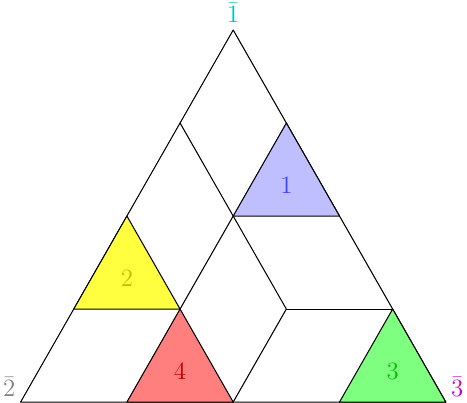}
    \end{minipage}
    \qquad
    \begin{minipage}{0.45\textwidth}
    \includegraphics{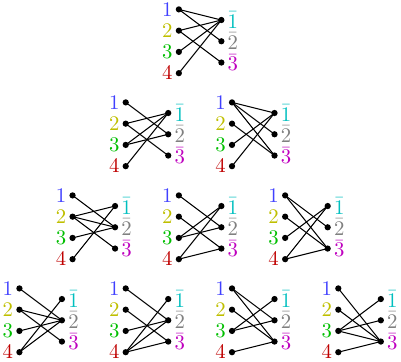}
    \end{minipage}

    \end{center}

    \caption{A fine mixed subdivision of $4\Delta^2$ and the corresponding collection of trees. The vertices of $4\Delta^2$ are labeled with the corresponding index of $[\bar{d}]$ and the unit simplices in the subdivision are labeled with the corresponding unique index of $[n]$ for which the label is $\Delta^2$.}\label{fig:FMSexample}
\end{figure}

The fine mixed subdivisions of $n\Delta^{d-1}$ are also in bijection with triangulation of products of two simplices, via what is known as \emph{the Cayley trick}:

\begin{theorem}[\cite{CayleyZono}, Theorem 3.1]
    The mixed subdivisions of $n\Delta^{d-1}$ are in bijection with polyhedral subdivisions of $\Delta^{n-1} \times \Delta^{d-1}$ where the vertices of every cell is a subset of the vertices of $\Delta^{n-1} \times \Delta^{d-1}$, and the fine mixed subdivisions biject to triangulations (i.e.\ subdivisions into simplices).
\end{theorem}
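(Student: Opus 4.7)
The plan is to realize $\Delta^{n-1} \times \Delta^{d-1}$ as the \emph{Cayley polytope} of $n$ copies of $\Delta^{d-1}$, so that the bijection with mixed subdivisions of $n\Delta^{d-1}$ comes from an elementary affine slicing argument. Working in $\mathbb{R}^n \oplus \mathbb{R}^d$, the vertices of $\Delta^{n-1} \times \Delta^{d-1}$ are exactly the pairs $(e_i, e_{\bar{j}})$, and the affine slice $S := \{(x, y) : x_1 = \cdots = x_n = 1/n\}$ intersects $\Delta^{n-1} \times \Delta^{d-1}$ in (a copy of) $\tfrac{1}{n}\Delta^{d-1}$; rescaling $y$ by $n$ then identifies this slice with $n\Delta^{d-1}$.

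To go from subdivisions to mixed subdivisions, I would first show that slicing commutes with Minkowski summation: if $C = \mathrm{conv}\{(e_i, e_{\bar{j}}) : (i, \bar{j}) \in G\}$ for a subgraph $G \subseteq K_{n,d}$ with no isolated left vertex, then a short calculation with convex combinations yields $n \cdot (C \cap S) = \sum_{i=1}^n C_i$, where $C_i = \mathrm{conv}\{e_{\bar{j}} : (i, \bar{j}) \in G\}$. Given a polyhedral subdivision $\mathcal{S}$ of $\Delta^{n-1} \times \Delta^{d-1}$ whose cells have vertex sets contained in the vertex set of $\Delta^{n-1}\times\Delta^{d-1}$, slicing each cell by $S$ and rescaling thus produces a family of Minkowski cells that cover $n\Delta^{d-1}$. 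The proper-intersection condition follows because $C \cap C'$ is a common face of $C, C' \in \mathcal{S}$ encoded by the subgraph $G(C) \cap G(C')$, and slicing yields $\sum_i (C_i \cap C'_i)$ with $C_i \cap C'_i$ a common face of $C_i$ and $C'_i$.

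For the inverse map, I would lift each cell $\sum_i C_i$ of a mixed subdivision $\mathcal{M}$ to $\widehat{C} := \mathrm{conv}\bigl(\bigcup_{i=1}^n \{e_i\} \times C_i\bigr) \subseteq \Delta^{n-1} \times \Delta^{d-1}$. Checking that $\{\widehat{C}\}$ is a polyhedral subdivision of $\Delta^{n-1} \times \Delta^{d-1}$ whose slices recover $\mathcal{M}$ is straightforward: covering follows because every point of $\Delta^{n-1} \times \Delta^{d-1}$ sits in a parameterized slice that projects to some Minkowski cell, and proper intersection follows because the lift of a common face of Minkowski cells lands in a common face of their lifts. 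Since both maps act transparently on the encoding subgraph, they are manifestly mutual inverses.

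Finally, fineness corresponds to the simplicial case: a full-dimensional simplex in $\Delta^{n-1} \times \Delta^{d-1}$ whose vertices are chosen from among the $nd$ vertices of the polytope has exactly $n + d - 1$ vertices, and it is classical that such a vertex set spans a full-dimensional simplex precisely when the corresponding subgraph of $K_{n,d}$ is a spanning tree. By \cref{thm:FMSaxioms}(1), spanning trees also encode fine Minkowski cells, so the bijection restricts to the desired one between triangulations and fine mixed subdivisions. I expect the main technical obstacle to be the careful bookkeeping verifying that the lift/slice operations preserve face incidences in both directions, but this ultimately reduces to elementary facts about convex hulls and Minkowski sums of faces of standard simplices.
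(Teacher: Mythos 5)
The paper does not prove this statement but cites it directly from the reference (the Cayley trick); your proposal is exactly the standard Cayley-embedding-and-slice argument used there, and it is correct. The only slip is cosmetic: the slice of $\Delta^{n-1}\times\Delta^{d-1}$ at $x=(1/n,\dots,1/n)$, projected to the $y$-coordinate, is a copy of $\Delta^{d-1}=\tfrac1n(n\Delta^{d-1})$ rather than of $\tfrac1n\Delta^{d-1}$, but your subsequent formula $n\cdot(C\cap S)=\sum_i C_i$ is what matters and is right.
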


\begin{remark}
    Due to the apparent symmetry between $[n]$ and $[\bar{d}]$ in $\Delta^{n-1} \times \Delta^{d-1}$, this bijection also gives a bijection between (fine) mixed subdivisions of $n\Delta^{d-1}$ and $d\Delta^{n-1}$. While it is useful to keep this symmetry in mind, we will intentionally avoid using $\Delta^{n-1} \times \Delta^{d-1}$ or $d\Delta^{n-1}$ for the rest of this paper to prevent unnecessary confusion between the role of the two sets of indices. Note that these objects all have the same combinatorial characterization given by \cref{thm:FMSaxioms}.
\end{remark}

We conclude this sub-section with one important property of fine mixed subdivisions of $n\Delta^{d-1}$.

\begin{definition}[\cite{Trianguloids}, Definition 4.1]
    For a subgraph $G$ of $K_{n, d}$, let the \emph{left degree vector} $LD(G)$ be the $n$-tuple $(v_1, \dots, v_n)$ of nonnegative integers, where $v_i$ is the degree of vertex $i$ in $G$. Define the \emph{right degree vector} $RD(G)$ analogously (as a $d$-tuple of nonnegative integers indexed by $[\bar{d}]$).

    In addition, the graph $G$ is called a \emph{left semi-matching} if $LD(G) = \mathbf{1}_{[n]}$, a \emph{right semi-matching} if $RD(G) = \mathbf{1}_{[\bar{d}]}$, and a \emph{partial matching} if all vertices have degree $0$ or $1$. We also define \emph{partial left (resp. right) semi-matchings} as graphs whose vertices in $[n]$ (resp. $[\bar{d}]$) all have degree $0$ or $1$.
\end{definition}

\begin{lemma}\label{lem:compat}
    \begin{enumerate}
        \item[(a)] If two distinct acyclic subgraphs $G$ and $G'$ of $K_{n,d}$ have both the same left and right degree vectors, then they are not compatible with each other.
        \item[(b)] \textnormal{(\cite{Permutohedra}, Lemma 12.8)} If two distinct spanning trees $T$ and $T'$ of $K_{n,d}$ have \emph{either} the same left degree vector \emph{or} the same right degree vector, then they are not compatible with each other.
    \end{enumerate}
\end{lemma}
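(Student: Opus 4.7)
The plan for both parts is to certify incompatibility by constructing a simple directed cycle of length at least $4$ in the compatibility graph. The two parts differ in how much degree balance is available.

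For part (a), I orient the symmetric difference $G \triangle G'$ as a subgraph of $U(G, G')$: edges in $G \setminus G'$ are directed $[n] \to [\bar{d}]$ and edges in $G' \setminus G$ are directed $[\bar{d}] \to [n]$. The matching LD and RD vectors make the in-degree equal to the out-degree at every vertex of this directed graph, which is nonempty since $G \neq G'$. Hence any walk can be extended indefinitely (upon arriving at a vertex one in-edge is used, leaving at least one unused out-edge by balance), and by finiteness eventually revisits a vertex, yielding a simple directed cycle. Since the underlying graph is bipartite and simple, this cycle has length $\geq 4$ and is therefore nontrivial in $U(G, G')$.

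For part (b), only one degree vector matches, so I cannot rely on a balanced symmetric difference and argue by contradiction instead. Assume without loss of generality that $LD(T) = LD(T')$ (the same-RD case being symmetric via a sink SCC in place of a source). Suppose $T \neq T'$ and that $T, T'$ are compatible. The main structural step is to show that, under compatibility, the strongly connected components of $U(T, T')$ coincide with the connected components of $T \cap T'$ on $V = [n] \sqcup [\bar{d}]$. One inclusion is immediate from bidirectional edges. For the other, any directed closed walk has a balanced edge multi-set, so it decomposes into edge-disjoint simple directed cycles; under compatibility each such cycle has length exactly $2$ (a bidirectional back-and-forth), so the walk traverses only $T \cap T'$ edges. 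Since $T \neq T'$ gives $|T \cap T'| < n + d - 1$, there are at least two such components, hence at least two SCCs.

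Passing to the DAG quotient, take a source SCC $V_1 = I_1 \sqcup \bar{J}_1$. I argue that $V_1$ is closed under $T$-neighborhoods: for $\bar{j} \in \bar{J}_1$, any $T \setminus T'$ edge incident to $\bar{j}$ would be a non-bidirectional incoming edge to $V_1$, forbidden at a source; and for $i \in I_1$, the source condition kills $T' \setminus T$ edges at $i$, and then $LD(T) = LD(T')$ forces $\deg_{T \setminus T'}(i) = 0$. So $T$ has no edges leaving $V_1$, and connectedness of $T$ forces $V_1 = V$, contradicting the existence of at least two components. The main obstacle is the SCC characterization, since it is where compatibility gets converted into usable structural information via cycle decomposition of closed walks; the source-SCC finish afterward is short and makes explicit precisely where the single-side degree hypothesis enters.
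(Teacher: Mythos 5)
Part (a) of your argument coincides with the paper's own proof: removing the bidirectional edges from $U(G,G')$ leaves the oriented symmetric difference, which is nonempty and balanced at every vertex, hence contains a simple directed cycle, which has length at least $4$ since the graph is simple and bipartite. For part (b) the paper gives no proof of its own and simply cites \cite{Permutohedra}, Lemma~12.8, so your strongly-connected-component argument is a genuine addition rather than a reproduction. The central step --- that under compatibility every directed closed walk in $U(T,T')$ decomposes into $2$-cycles and therefore traverses only $T\cap T'$ edges, so the SCCs of $U(T,T')$ are exactly the connected components of $T\cap T'$ --- is a clean way to convert the ``no long cycle'' condition into usable structure, and the source-SCC finish correctly exploits the one-sided degree balance to show that $V_1$ is closed under $T$-neighborhoods, contradicting connectedness of the spanning tree $T$.

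One small point of rigor: when you write that ``the source condition kills $T'\setminus T$ edges at $i$'', the source condition only forbids incoming edges from \emph{outside} $V_1$; it does not by itself rule out a $T'\setminus T$ edge $\bar{j}\to i$ with $\bar{j}\in V_1$. You need the extra (one-line) observation that any edge of $U(T,T')$ joining two vertices of the same SCC must be bidirectional. This follows immediately from your closed-walk lemma applied to that edge concatenated with a return path inside the SCC, or alternatively from acyclicity: a $T'\setminus T$ edge between two vertices of the same $T\cap T'$ component would, together with the $T\cap T'$-path joining its endpoints, close a cycle in the tree $T'$. The symmetric check is needed in the sink-SCC case for matching right degree vectors. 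With that sentence added, the proof of (b) is complete and self-contained.
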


\begin{proof}[Proof of (a)]
    Consider the graph $U(G, G')$ and remove all bi-directional edges from it. Since $G$ and $G'$ have the same degree vectors, each vertex of the resulting graph have the same in-degree and out-degree and the graph is non-empty, and hence contains a nontrivial cycle. This means that $G$ and $G'$ are not compatible.
\end{proof}

\begin{definition}
    For a fine Minkowski cell $C$ of $n\Delta^{d-1}$, let $LD^-(C) = LD(G(C)) - \mathbf{1}_{[n]}$ and $RD^-(C) = RD(G(C)) - \mathbf{1}_{[\bar{d}]}$. These are referred to as the \emph{reduced left/right degree vectors}.
\end{definition}

\begin{proposition}[\cite{Permutohedra}, Lemma 14.9]
    Each fine Minkowski cell $C$ of $n\Delta^{d-1}$ contains exactly one lattice simplex that is a translated copy of $\Delta^{d-1}$, whose vertices are $\{RD^-(C) + e_{\bar{j}} \mid \bar{j}\in [\bar{d}]\}$. The interior of $C$ is disjoint from all other such lattice simplices.
\end{proposition}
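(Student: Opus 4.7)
The plan is to deduce both assertions from a single sharp computation of coordinatewise maxima over $C$. Writing $v := RD^-(C)$ and $G := G(C)$, the first step is to evaluate $\max_{x \in C} x_{\bar{j}}$ for each $\bar{j} \in [\bar{d}]$. Since $C = \sum_i C_i$ and a linear functional distributes over Minkowski sums,
\[ \max_{x \in C} x_{\bar{j}} = \sum_{i=1}^n \max_{y \in C_i} y_{\bar{j}} = |\{i \in [n] : (i,\bar{j}) \in G\}| = \deg_G(\bar{j}) = v_{\bar{j}} + 1. \]
A short check shows this functional is nonconstant on $C$ (otherwise every $i \in [n]$ adjacent to $\bar{j}$ would have $V(C_i) = \{e_{\bar{j}}\}$, i.e., would be a leaf of $G$, contradicting $G$ being a spanning tree of $K_{n,d}$ when $d \geq 2$). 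Hence the maximum is attained on a proper face of $C$, and every $x \in \operatorname{int}(C)$ satisfies the strict inequality $x_{\bar{j}} < v_{\bar{j}} + 1$.

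Next I would exhibit $v + \Delta^{d-1}$ inside $C$ by rooting $G$ at each $\bar{j} \in [\bar{d}]$ in turn and setting $f_{\bar{j}}(i) \in [\bar{d}]$ to be the parent of $i \in [n]$ in the rooted tree. Since $(i, f_{\bar{j}}(i)) \in G$, the vector $e_{f_{\bar{j}}(i)}$ is a vertex of $C_i$, and a count of children yields $\sum_{i=1}^n e_{f_{\bar{j}}(i)} = v + e_{\bar{j}}$: the root $\bar{j}$ has $\deg_G(\bar{j})$ children in $[n]$, while every other $\bar{k} \in [\bar{d}]$ has $\deg_G(\bar{k}) - 1$ children. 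Hence each $v + e_{\bar{j}}$ lies in $C$, and by convexity $v + \Delta^{d-1} \subseteq C$.

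For the disjointness statement (which also yields uniqueness), let $v' + \Delta^{d-1}$ be any other translated lattice copy of $\Delta^{d-1}$ in $n\Delta^{d-1}$. Then $v, v'$ are both nonnegative integer vectors with coordinate sum $n-1$, so $v' \neq v$ forces some $\bar{j}$ with $v'_{\bar{j}} \geq v_{\bar{j}} + 1$. Using the description $v' + \Delta^{d-1} = \{x \in n\Delta^{d-1} : x_{\bar{k}} \geq v'_{\bar{k}} \text{ for all } \bar{k}\}$, the first step gives $x_{\bar{j}} < v_{\bar{j}} + 1 \leq v'_{\bar{j}}$ for every $x \in \operatorname{int}(C)$, so $\operatorname{int}(C) \cap (v' + \Delta^{d-1}) = \emptyset$. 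Uniqueness of the contained simplex follows too: any $v' + \Delta^{d-1} \subseteq C$ would force $v'_{\bar{j}} + 1 \leq v_{\bar{j}} + 1$ for every $\bar{j}$, and equal coordinate sums then yield $v' = v$. I expect the main technical subtlety to be the strictness assertion in the first step; once that is in hand, the remaining arguments reduce to short integer-sum inequalities.
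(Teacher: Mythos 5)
Your proof is correct. The paper states this result as a citation to Postnikov's \emph{Permutohedra, associahedra, and beyond}, Lemma 14.9, and does not reprove it, so there is no in-paper argument to compare against; your self-contained argument is a clean and standard way to establish it. The main identity $\max_{x\in C} x_{\bar j} = \deg_G(\bar j)$ via additivity of linear maxima over Minkowski summands is exactly the right tool, the nonconstancy check correctly rules out the degenerate case, and the construction of the vertex $v+e_{\bar j}$ by rooting $G$ at $\bar j$ is sound. It is worth noticing that your map $f_{\bar j}$ is precisely the $\bar j$-pull $G^{\rightarrow\bar j}$ that the paper later defines in \cref{sec:operations}: the point $\sum_i e_{f_{\bar j}(i)}$ is $RD(G^{\rightarrow\bar j})$, and the bookkeeping you do with parent/children counts is a special case of the relation $RD(\mathbb T^{\rightarrow\bar j}) = RD(\mathbb T) - \mathbf 1_{[\bar d]} + e_{\bar j}$ implicit in \cref{lem:pushpull}. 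Finally, your uniqueness and disjointness steps, which only use the inequality description $v'+\Delta^{d-1} = \{x : \sum x_{\bar k} = n,\ x_{\bar k}\geq v'_{\bar k}\}$ together with equal coordinate sums, are complete as written.
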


We will refer to $RD^-(C)$ as the \emph{position} of the cell $C$, and the simplex $\text{conv}(\{RD^-(C) + e_{\bar{j}} \mid \bar{j}\in [\bar{d}]\})$ as the \emph{base simplex} of $C$. It is not difficult to see that all vertices of the base simplex are also vertices of $C$.

\begin{theorem}[\cite{Permutohedra}, Theorem 12.9]\label{thm:treedegrees}
    Over all cells $C$ of a fine mixed subdivision of $n\Delta^{d-1}$, the set of values of $LD^-(C)$ is exactly the set of lattice points in $(d-1)\Delta^{n-1}$, and the set of values of $RD^-(C)$ is exactly the set of lattice points in $(n-1)\Delta^{d-1}$. Moreover, each $LD^-(C)$ and $RD^-(C)$ is represented by exactly one cell.
\end{theorem}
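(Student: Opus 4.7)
The plan is to establish the statement for $RD^-$ first, and then deduce the one for $LD^-$ by the symmetry between fine mixed subdivisions of $n\Delta^{d-1}$ and $d\Delta^{n-1}$ via the Cayley trick (noted in the remark after the Cayley trick theorem). I begin by verifying containment: since any spanning tree $G$ of $K_{n,d}$ has exactly $n+d-1$ edges, the right-degree sum is $\sum_{\bar j \in [\bar d]} \deg_G(\bar j) = n+d-1$, giving $\sum_{\bar j} (\deg_G(\bar j) - 1) = n-1$. Every right degree is at least $1$ (since $G$ spans), so $RD^-(C)$ is a nonnegative integer $d$-tuple summing to $n-1$, i.e., a lattice point of $(n-1)\Delta^{d-1}$. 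Thus $C \mapsto RD^-(C)$ is a well-defined map from the cells of $\mathcal{G}$ into the target set.

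Next I would handle injectivity: if two distinct cells $C \ne C'$ had the same $RD^-$, then by the preceding proposition their base simplices would coincide, forcing $C\cap C'$ to contain this common $(d-1)$-dimensional simplex. But $n\Delta^{d-1}$ itself has dimension $d-1$, and in a polyhedral subdivision the intersection of any two distinct cells is a common proper face, hence of dimension at most $d-2$, a contradiction. For surjectivity, given any lattice point $p$ of $(n-1)\Delta^{d-1}$, consider the base simplex $S_p := \operatorname{conv}\{p + e_{\bar j} : \bar j \in [\bar d]\} \subseteq n\Delta^{d-1}$. Since the cells of $\mathcal{G}$ cover $n\Delta^{d-1}$ and all pairwise cell intersections have $(d-1)$-dimensional measure zero, at least one cell $C$ must meet the relative interior of $S_p$ in positive measure; by the preceding proposition, $\operatorname{int}(C)$ is disjoint from every base simplex other than $S_{RD^-(C)}$, forcing $p = RD^-(C)$.

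Finally, the statement for $LD^-$ follows by applying the $RD^-$ result to the image of the given subdivision under the Cayley-trick bijection with fine mixed subdivisions of $d\Delta^{n-1}$: this swaps the roles of $[n]$ and $[\bar d]$, so the dual ``$RD^-$'' is precisely $LD^-$ here and the target $(n-1)\Delta^{d-1}$ becomes $(d-1)\Delta^{n-1}$. The main subtlety is surjectivity, but the cited proposition does most of the heavy lifting by pinning each base simplex to at most one cell's interior, so a measure-covering argument suffices. As a sanity check, one could alternatively count: the normalized volume of $\Delta^{n-1}\times\Delta^{d-1}$ equals $\binom{n+d-2}{n-1}$, which agrees with $\lvert\{\text{lattice points of }(n-1)\Delta^{d-1}\}\rvert$, so via the Cayley trick, injectivity together with the cell count would already force bijectivity.
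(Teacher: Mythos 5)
The paper does not include a proof of this theorem; it is cited directly from Postnikov (\cite{Permutohedra}, Theorem~12.9), so there is no ``paper's own proof'' to compare against line by line. Your argument is correct and self-contained modulo the proposition on base simplices (\cite{Permutohedra}, Lemma~14.9) that immediately precedes the theorem, which you use appropriately. The containment step (spanning tree has $n+d-1$ edges, each right degree at least $1$) is right, the surjectivity-by-covering argument is sound (a point of $\operatorname{relint}(S_p)$ off the measure-zero cell boundaries lies in $\operatorname{int}(C)$ for a unique cell $C$, and the proposition then forces $S_p$ to be the base simplex of $C$), and the Cayley-trick transfer to $LD^-$ is the standard symmetry swap of $[n]$ and $[\bar d]$.

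One remark on efficiency: for injectivity you argue geometrically via base simplices and proper intersection of cells, which works, but the paper already records \cref{lem:compat}(b) (two distinct spanning trees with the same right degree vector are incompatible), which gives injectivity in one line given the compatibility axiom of \cref{thm:FMSaxioms}. Postnikov's own route, which you gesture at in your ``sanity check,'' is precisely the counting version: distinct cells have distinct $LD$ by Lemma~12.8, the number of cells equals the normalized volume $\binom{n+d-2}{n-1}$, and the number of lattice points in $(d-1)\Delta^{n-1}$ is the same, so injectivity plus equal cardinality forces bijectivity. Your base-simplex covering argument buys a more conceptual, volume-free proof of surjectivity that makes the correspondence explicit (lattice point $\leftrightarrow$ the unique cell whose interior meets that unit simplex), at the mild cost of leaning on the geometric Lemma~14.9 rather than on pure counting.
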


\begin{remark}\label{rem:oneaxiom}
    One immediate consequence of this theorem is that any fine mixed subdivision of $n\Delta^{d-1}$ contains exactly $\binom{n+d-2}{n-1}$ cells. If we know that there are exactly this many spanning trees in a collection $\mathcal{G}$, then \emph{either one} of tree linkage axiom or compatibility axiom of \cref{thm:FMSaxioms} is sufficient in showing that $\mathcal{G}$ encodes a fine mixed subdivision. A proof of the former can be found in the proof of \cite{Trianguloids}, Theorem 3.6, and the latter is \cite{Ensembles}, Proposition 2.7.
\end{remark}

\subsection{Generic tropical oriented matroids}

The definition of a \emph{tropical oriented matroid} closely resembles the vector axioms of an oriented matroid.

\begin{definition}[\cite{TropicalOM}, Definition 3.1 and 5.2]
    A \emph{$(n,d)$-type} $T$ is an $n$-tuple $(T_1, \dots, T_n)$ of nonempty subsets of $[\bar{d}]$, and a \emph{$(n,d)$-semitype} is such a tuple of (possibly empty) subsets of $[\bar{d}]$.
\end{definition}

\begin{remark}\label{rem:trophyperplane}
    A \emph{tropical hyperplane} can be viewed as a translated copy of the normal fan of $\Delta^{d-1}$. Each tropical hyperplane hence creates $d$ closed full-dimensional ``sectors'', and divides the space into $2^d-1$ open ``faces'', which are in easy correspondence with non-empty subsets of $[\bar{d}]$: Each face corresponds to the set of sectors that it is in. For an arrangement of $n$ tropical hyperplanes, each type records one possible combination of $n$ faces (one from each tropical hyperplane) that a point in the space can simultaneously reside in.

    By drawing the Poincar\'{e} dual of each fine Minkowski cell in a fine mixed subdivision of $n\Delta^{d-1}$, one can obtain an arrangement of $n$ \emph{tropical pseudo-hyperplanes}, each of which is a tropical hyperplane under a mild piecewise-linear homeomorphism. See \cite{TropoRep}, Definition 6.4 for the precise definition of a tropical pseudo-hyperplane arrangement.
\end{remark}

\begin{figure}[h!]
    \centering
    \includegraphics{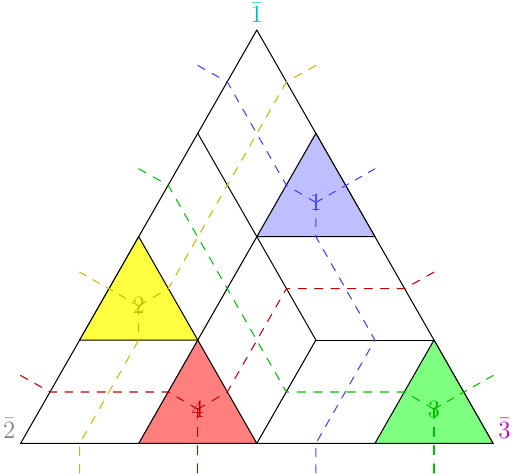}
    \caption{A tropical pseudo-hyperplane arrangement obtained from the fine mixed subdivision in \cref{fig:FMSexample}.}
\end{figure}

Similar to fine Minkowski cells, each $(n, d)$-type or semitype $T$ can be encoded via a subgraph $G(T)$ of $K_{n, d}$, where $(i, \bar{j})$ is an edge of $G(T)$ if and only if $\bar{j}\in T_i$. The graph $G$ corresponds to a type if and only if $LD(G)_i \geq 1$ for every $i\in [n]$.

Here we only provide a definition for \emph{generic} oriented matroids, which encodes generic tropical pseudo-hyperplane arrangements. All of the following definitions and results have been rephrased for this special case.

\begin{definition}[\cite{TriangulationTOM}, Definition 3.8]
    A collection $\mathcal{T}$ of $(n, d)$-types forms a \emph{generic $(n, d)$-tropical oriented matroid} if all types correspond to acyclic subgraphs of $K_{n, d}$, and \begin{enumerate}
        \item (Boundary) For every $\bar{j}\in [\bar{d}]$, the type $(\{\bar{j}\}, \dots, \{\bar{j}\})\in \mathcal{T}$.
        \item (Surrounding) If $T\in \mathcal{T}$ and $T'$ is a type such that $G(T')\subseteq G(T)$, then $T'\in \mathcal{T}$. ($T'$ is called a \emph{refinement} of $T$.)
        \item (Compatibility/Comparability) For any two types $U, V\in \mathcal{T}$, $G(U)$ and $G(V)$ are compatible with each other.
        \item (Elimination) For any two types $U, V\in \mathcal{T}$ and $i\in [n]$, there exists a type $W\in \mathcal{T}$ such that $W_i = U_i \cup V_i$ and $W_{i'} \in \{U_{i'}, V_{i'}\}$ for all $i'\in [n]\setminus \{i\}$.\footnote{In the non-generic case the condition says $W_{i'} \in \{U_{i'}, V_{i'}, U_{i'}\cup V_{i'}\}$, but from the surrounding axiom we see that the third possibility can be reduced to either of the first two in the generic case.} We say that $W$ is an \emph{elimination between $U$ and $V$ at the $i$-th coordinate}.
    \end{enumerate}
\end{definition}

\begin{definition}[\cite{Ensembles}, Section 3]
    A type $T$ of a generic tropical oriented matroid is a \emph{tree-type}\footnote{Also called a \emph{vertex} (\cite{TropicalOM}, Definition 4.1) due to it corresponding to a single point in the corresponding \emph{tropical pseudo-hyperplane arrangement}, which we will avoid using to prevent confusion with vertices of polytopes.} if $G(T)$ is a spanning tree, and is a \emph{tope} if $LD(G(T)) = \mathbf{1}_{[n]}$ (i.e.\ $G(T)$ is a \emph{left semi-matching}). 
\end{definition}

For convenience, we will sometimes shorten each coordinate of a type to a string of elements of $[\bar{d}]$ (e.g. $(\bar{2}\bar{3}, \bar{3}, \bar{1}\bar{2}, \bar{3})$). In the case of topes, we may further shorten it to a string of $n$ elements of $[\bar{d}]$ (e.g. $\bar{2}\bar{3}\bar{2}\bar{1}$).

\begin{theorem}[\cite{TropicalOM}, Theorem 4.4 and 4.6]\label{thm:TOMtreetope}
    A generic tropical oriented matroid can be completely determined by either the set of all tree-types or the set of all topes. In particular, a type $T$ is in the matroid if and only if either: \begin{enumerate}
        \item[(a)] $G(T)$ is a subgraph of $G(T')$ for a tree-type $T'$, or
        \item[(b)] $G(T)$ is compatible with $G(T')$ for all topes $T'$.
    \end{enumerate}
\end{theorem}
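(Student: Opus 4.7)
The "if" direction of (a) is immediate from the Surrounding axiom: if $T'\in\mathcal{T}$ is a tree-type and $G(T)\subseteq G(T')$, then $T$ is a refinement of $T'$ and lies in $\mathcal{T}$. Dually, the "only if" direction of (b) is immediate from the Compatibility axiom applied to $T$ and each tope $T'\in\mathcal{T}$. The substantive content is in the two converses.

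For the "only if" of (a), my plan is reverse induction on $|E(G(T))|$. In the base case, $|E(G(T))|=n+d-1$, so $G(T)$ is a spanning tree (being acyclic on $n+d$ vertices with the maximal edge count) and $T$ is its own tree-type. For the inductive step, if $G(T)$ is a proper forest then it has at least two connected components, so there exists $e^*=(i^*,\bar{j}^*)\notin G(T)$ joining two of them. The goal is to construct a type $T^\dagger\in\mathcal{T}$ with $G(T)\cup\{e^*\}\subseteq G(T^\dagger)$, after which the inductive hypothesis applied to $T^\dagger$ yields the desired tree-type. To build $T^\dagger$, apply the Elimination axiom to $T$ and the boundary type $B_{\bar{j}^*}=(\{\bar{j}^*\},\dots,\{\bar{j}^*\})\in\mathcal{T}$ at coordinate $i^*$, obtaining a type $W$ with $W_{i^*}=T_{i^*}\cup\{\bar{j}^*\}$; however, $W$'s other coordinates may collapse from $T_{i'}$ to $\{\bar{j}^*\}$. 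I would iteratively repair each such \emph{spoiled} coordinate $i'$ by eliminating the current intermediate type with $T$ at $i'$, using a careful (e.g.\ lexicographic) induction to ensure that progress at $i^*$ and at previously-repaired coordinates is preserved.

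For the "if" of (b), my plan is to reduce to (a) by producing a tree-type $T^*\in\mathcal{T}$ with $G(T)\subseteq G(T^*)$; then Surrounding gives $T\in\mathcal{T}$. Assume $G(T)$ is acyclic (else compatibility with topes fails trivially, since tope-graphs are acyclic). Suppose for contradiction that no such $T^*$ exists. Then for every tree-type $T^*\in\mathcal{T}$, there is an edge $e=(i_0,\bar{j}_k)\in G(T)\setminus G(T^*)$, whose union with the unique $G(T^*)$-path $i_0-\bar{j}_1-i_1-\cdots-i_{k-1}-\bar{j}_k$ creates a cycle in $G(T^*)\cup\{e\}$. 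Choose $T^*$ and $e$ so that this cycle is as short as possible, and consider the tope $T'$ of $T^*$ determined by $T'_{i_m}=\{\bar{j}_{m+1}\}$ for $m=0,\dots,k-1$ (and arbitrary singletons elsewhere); $T'\in\mathcal{T}$ by Surrounding. The plan is then to exhibit a nontrivial directed cycle in $U(G(T),G(T'))$ starting with $i_0\to\bar{j}_k$ (via $e$) and traversing alternating tope-edges of $G(T')$ with further edges of $G(T)$ emanating from the intermediate vertices $i_1,\dots,i_{k-1}$ (which exist since $T$ is a type). The minimality of $k$ rules out the walk escaping into a genuinely shorter problem cycle, forcing it to close back to $i_0$ and contradicting compatibility.

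The main obstacle in (a) is the elimination bookkeeping: ensuring each repair step preserves the progress gained at $i^*$ and at previously-repaired coordinates, which may require a delicate potential-function or lexicographic argument. The main obstacle in (b) is closing up the directed cycle in $U(G(T),G(T'))$: if the edges of $G(T)$ at the intermediate vertices $i_m$ do not lie on the tree-path, the walk could wander off, and handling this requires either the minimality argument above or exchanging $T^*$ for a neighboring tree-type via \cref{lem:adjtrees} to force a shorter counterexample. I expect (b) to be the more technically intricate of the two.
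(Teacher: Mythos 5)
This statement is a cited external result (Ardila--Develin, Theorems~4.4 and~4.6); the paper at hand does not reprove it, so there is no internal proof to compare against. The paper's only hint at the method is in the proof of \cref{lem:refinehonest}, which says that ``a series of elimination'' arguments in the style of Ardila--Develin's Theorem~4.6 yields a spanning-tree refinement. Your approach for the ``only if'' of (a) is in the same general spirit (iterated elimination), but as written it has a genuine gap rather than a bookkeeping nuisance. After eliminating $T$ with $B_{\bar{j}^*}$ at $i^*$ to get $W^{(0)}$ and then ``repairing'' a spoiled coordinate $i'$ by eliminating $W^{(k)}$ with $T$ at $i'$, the elimination axiom returns a type $W^{(k+1)}$ with $W^{(k+1)}_{i'} = W^{(k)}_{i'}\cup T_{i'}$, but on every other coordinate $i''$ it only guarantees $W^{(k+1)}_{i''}\in\{W^{(k)}_{i''},T_{i''}\}$ --- you do not get to choose which. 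In particular $W^{(k+1)}_{i^*}$ may drop back to $T_{i^*}$, losing $\bar{j}^*$, and if $\bar{j}^*\in T_{i'}$ the repair at $i'$ creates no new augmented coordinate either. So the process can terminate with $W^{(\text{final})} = T$ and no net edge gained; no lexicographic or potential argument is offered that rules this out, and I do not see one. Ardila--Develin's argument must exert finer control than ``eliminate against a boundary type and repair.''

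Your sketch for the ``if'' direction of (b) is too coarse to evaluate. You want to show that a type $T$ compatible with all topes is contained in a tree-type's graph, by taking a minimal $T^*$--$e$ counterexample and walking a cycle in $U(G(T),G(T'))$ for a suitable tope $T'\subseteq T^*$, but the plan rests on the unproved claim that the walk must close up; you explicitly flag this as an obstacle and do not resolve it. Note also that even assuming (a)'s ``only if'' is in hand, you still need to justify the existence of at least one tree-type to launch the argument (this follows from the boundary plus elimination axioms, but it should be said). As it stands, both directions of the substantive content remain open in your proposal; this is not unreasonable given that the statement is itself a theorem-level result imported from another paper, but you should not regard either gap as merely a ``bookkeeping'' or ``technical'' loose end.
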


\begin{theorem}[\cite{TropicalOM}, Theorem 6.2 and \cite{TriangulationTOM}, Proposition 4.2]
    By identifying the tree-types with the fine Minkowski cells via the graph encoding, there is a canonical bijection between generic $(n,d)$-tropical oriented matroids and fine mixed subdivisions of $n\Delta^{d-1}$.
\end{theorem}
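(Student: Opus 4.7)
The plan is to exhibit mutually inverse maps
$\Phi : \mathcal{T} \mapsto \{G(T) : T \text{ is a tree-type of } \mathcal{T}\}$
and
$\Psi : \mathcal{G} \mapsto \{T : G(T) \subseteq G \text{ for some } G\in \mathcal{G}\}$
between generic $(n,d)$-TOMs and fine mixed subdivisions of $n\Delta^{d-1}$. The verification splits into three tasks: (i) $\Phi(\mathcal{T})$ satisfies the three axioms of \cref{thm:FMSaxioms}; (ii) $\Psi(\mathcal{G})$ satisfies the four GTOM axioms; (iii) $\Psi\circ\Phi$ and $\Phi\circ\Psi$ are the identity.

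For (i), the spanning tree condition is built into the definition of a tree-type, and compatibility is inherited directly from the GTOM compatibility axiom restricted to tree-types. The substantive check is tree linkage: given a tree-type $T$ with an internal edge $e = (i,\bar{j})$, I need another tree-type $T'$ with $G(T')\setminus e' = G(T)\setminus e$. Since $e$ is internal, $|T_i|\ge 2$ and $\bar{j}$ has a second neighbor $i''\in I^{(2)}$ in $G(T)$. My plan is to apply the elimination axiom to two carefully chosen refinements of $T$ that shrink $T_i$ and $T_{i''}$ in opposite ways near $e$, then use \cref{lem:adjtrees} to certify that the resulting type is a tree-type whose unique differing edge lies between $I^{(2)}$ and $\bar{J}^{(1)}$.

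For (ii), surrounding is immediate and compatibility carries over because subgraphs of compatible graphs are compatible. Boundary follows from \cref{thm:treedegrees}: for each $\bar{j}\in[\bar{d}]$ there exists a cell $C$ with $RD^-(C) = (n-1)e_{\bar{j}}$, forcing $\bar{j}$ to have degree $n$ in $G(C)$, hence $G(C)$ contains the star underlying $(\{\bar{j}\},\dots,\{\bar{j}\})$. The hard axiom is elimination: given types $U,V$ with $G(U)\subseteq G$ and $G(V)\subseteq G'$ for trees $G,G'\in\mathcal{G}$, I must produce a tree $G''\in\mathcal{G}$ whose subgraph realizes the desired $W$. My plan is to walk from $G$ toward $G'$ by iterated tree linkage swaps, using compatibility at each step to keep track of which of $U$'s and $V$'s edges survive, and to argue that some intermediate tree along this walk contains the required $W$.

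Finally for (iii), every spanning tree in $\Psi(\mathcal{G})$ is by construction a subgraph of some $G\in\mathcal{G}$, and since it already has $n+d-1$ edges it must equal $G$; hence $\Phi(\Psi(\mathcal{G}))=\mathcal{G}$. In the other direction, \cref{thm:TOMtreetope}(a) gives that every type of a GTOM is a refinement of some tree-type, so $\Psi(\Phi(\mathcal{T}))=\mathcal{T}$. The main obstacle throughout is the elimination check in (ii): constructing a merged type $W$ from local tree-linkage moves is the genuinely non-trivial step, essentially because tree linkage is a one-edge exchange while elimination is a global combinatorial statement about all $n$ coordinates at once. The dual difficulty appears in (i), where reading off a tree-level exchange from the coordinate-wise elimination axiom requires the \cref{lem:adjtrees} rigidity to pin down the replacement edge.
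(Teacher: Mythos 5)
The paper does not prove this theorem; it is a cited background result (\cite{TropicalOM}, Theorem 6.2 and \cite{TriangulationTOM}, Proposition 4.2), so there is no in-paper argument to compare your proposal against. Judged on its own merits, your outline has the right skeleton --- define $\Phi$ and $\Psi$, check the three FMS axioms for $\Phi(\mathcal{T})$, check the four GTOM axioms for $\Psi(\mathcal{G})$, verify the two compositions are identities --- and part (iii) is fine once the rest is in place (a spanning tree has the maximal number of edges among acyclic subgraphs, and \cref{thm:TOMtreetope}(a) handles the other composition).

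There is, however, a concrete error in your plan for tree linkage in part (i). You propose ``apply the elimination axiom to two carefully chosen refinements of $T$.'' But if $U$ and $V$ are both refinements of $T$, i.e.\ $G(U), G(V)\subseteq G(T)$, then the elimination axiom produces $W$ with $W_i = U_i\cup V_i\subseteq T_i$ and $W_{i'}\in\{U_{i'},V_{i'}\}\subseteq T_{i'}$, so $G(W)\subseteq G(T)$ and $W$ is again a refinement of $T$. If $W$ is a tree-type then $G(W) = G(T)$ by edge count, so you only recover $T$ itself, never a new tree. To find the adjacent tree-type sharing $G(T)\setminus e$, at least one of the two inputs to elimination must \emph{not} refine $T$ --- for instance one can eliminate $T$ against a boundary type $(\{\bar{j}'\},\dots,\{\bar{j}'\})$ with $\bar{j}'$ on the opposite side of $e$ from $\bar{j}$, then use surrounding and \cref{lem:adjtrees} to pin down the new edge. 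As written, your construction cannot escape the refinements of $T$.

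The elimination check in part (ii) is where you correctly locate the main difficulty, but your sketch (``walk from $G$ toward $G'$ by iterated tree-linkage swaps, and argue some intermediate tree contains $W$'') is not yet an argument: there is no canonical path, tree-linkage moves are not unique, and you have not specified an invariant that guarantees termination at a tree containing $W_i = U_i\cup V_i$ with the other coordinates chosen from $U$ or $V$. The standard approach here is essentially geometric (follow a line segment through the subdivision from a point realizing $U$ to one realizing $V$ and inspect the cells crossed), and a purely combinatorial translation needs to be made precise before this can count as a proof.
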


\begin{remark}\label{rem:topes}
    Under this bijection, the topes of a generic tropical oriented matroid correspond to lattice points in $n\Delta^{d-1}$ (i.e.\ vertices of fine Minkowski cells). Therefore, there is exactly one tope $T_v$ with $RD(G(T_v)) = v$ for every lattice point $v\in n\Delta^{d-1}$. 
\end{remark}

\begin{figure}[h!]
    \centering
    \includegraphics{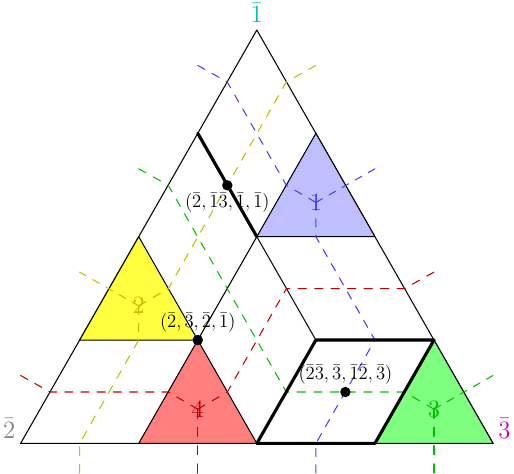}
    \caption{Three examples of types in the generic tropical oriented matroid and their corresponding cells/faces in the corresponding fine mixed subdivision in \cref{fig:FMSexample}. The one corresponding to a lattice point is a tope and the one corresponding to a rhombus is a tree-type.}
\end{figure}

The bijection also extends to \emph{all} tropical oriented matroids and \emph{all} mixed subdivisions, as well as \emph{all} tropical pseudo-hyperplane arrangements. See \cite{TropoRep} for more details.

For easier comparison with other objects defined later, it is also useful to define a slightly modified version of generic tropical oriented matroid.

\begin{definition}[\cite{TropicalOM}, Definition 5.2]
    An \emph{extended generic $(n, d)$-tropical oriented matroid} is a collection $\tilde{\mathcal{T}}$ of $(n,d)$-\emph{semi}types satisfying the same axioms as a generic tropical oriented matroid (replacing all instances of ``type'' with ``semitype''). In such a matroid, the semitypes that have no empty sets in any coordinate will be called \emph{honest types}.

    Given a generic tropical oriented matroid $\mathcal{T}$, its \emph{completion} is an extended generic tropical oriented matroid consists of all semitypes obtained by replacing some coordinates of a type with empty sets. The reverse process (of removing all non-type semitypes) is called \emph{reduction}.
\end{definition}

\begin{remark}
    It is clear that the reduction of an extended generic tropical oriented matroid is a generic tropical oriented matroid. The completion trivially preserves boundary, surrounding, and comparability axioms. To show that the completion also satisfies elimination between two semi-types $U$ and $V$, we can pass to two honest types $U^+$ and $V^+$ in the original non-extended matroid, eliminate between them to get a type $W^+$, and then refining it to $W$.
\end{remark}

The fact that completion and reduction give a full bijection between extended and non-extended generic tropical oriented matroids follows from the following lemma:

\begin{lemma}\label{lem:refinehonest}
    In an extended generic tropical oriented matroid $\tilde{\mathcal{T}}$, any semitype $T\in \tilde{\mathcal{T}}$ is a refinement of an honest type $T^+\in \tilde{\mathcal{T}}$. 
\end{lemma}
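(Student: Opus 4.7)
The plan is to proceed by induction on $k = |I_0(T)|$, the number of empty coordinates of $T$. The base case $k = 0$ is immediate, taking $T^+ = T$. For the inductive step with $k \geq 1$, the goal is to produce a semitype $T^\sharp \in \tilde{\mathcal{T}}$ that is refined by $T$ entry-wise (i.e.\ $T_i \subseteq T^\sharp_i$ for all $i$) and satisfies $|I_0(T^\sharp)| < k$; applying the inductive hypothesis to $T^\sharp$ then yields an honest type $T^+$ refining $T^\sharp$, which is automatically refined by $T$.

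The construction of $T^\sharp$ proceeds via the elimination axiom. Fix any $i^* \in I_0(T)$ and $\bar{j} \in [\bar{d}]$, and apply elimination to $T$ and the boundary honest type $B^{\bar{j}} = (\{\bar{j}\}, \ldots, \{\bar{j}\}) \in \tilde{\mathcal{T}}$ at coordinate $i^*$. This produces some $W \in \tilde{\mathcal{T}}$ with $W_{i^*} = \{\bar{j}\}$ and $W_k \in \{T_k, \{\bar{j}\}\}$ for $k \neq i^*$. Since $W_k = \emptyset$ forces $T_k = \emptyset$, we have $|I_0(W)| \leq k - 1$. In the fortunate scenario where $W_k = T_k$ for every $k \neq i^*$, we take $T^\sharp = W$ and are done.

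The main obstacle arises in the unfortunate scenario where $W_k = \{\bar{j}\}$ for some $k \neq i^*$ with $T_k$ containing an element $\bar{j}' \neq \bar{j}$, so that $T$ fails to refine $W$ at coordinate $k$. My proposed remedy is to upgrade the induction to a maximality argument: let $S \in \tilde{\mathcal{T}}$ be a semitype with $T_i \subseteq S_i$ for all $i$ chosen to maximize $\sum_i |S_i|$, breaking ties by minimizing $|I_0(S)|$. If $S$ is honest we are done; otherwise, applying elimination of $S$ and $B^{\bar{j}}$ at an empty $i^* \in I_0(S)$ should, via a case analysis on the sizes $|S_k|$ together with the surrounding axiom and the compatibility constraints coming from \cref{lem:compat}, produce some $W'$ in the same family that either strictly increases the total size or preserves it while decreasing $|I_0|$, contradicting the choice of $S$. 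The hardest part of the argument will be ruling out the remaining case where the elimination outcome has strictly smaller total size, which likely requires iterating the elimination using $S$ itself to patch up coordinates in turn, or exploiting the uniqueness in \cref{lem:adjtrees} to constrain which elimination outcomes are actually compatible with $S$ and $T$ simultaneously.
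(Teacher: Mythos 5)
Your argument has a genuine gap that you yourself flag: the maximality/tie-breaking step is not completed, and the ``case analysis'' that is supposed to produce a contradiction is neither spelled out nor obviously available. The difficulty is real, not just unfinished bookkeeping. When you eliminate $S$ against the boundary semitype $B^{\bar{j}}$ at an empty coordinate $i^*$, the elimination axiom permits the outcome $W$ to replace $S_k$ by $\{\bar{j}\}$ at \emph{several} coordinates $k$ simultaneously, each of which can strictly decrease $\sum_i |W_i|$ (by $|S_k|-1 \geq 1$) and, worse, break the refinement relation $T_k \subseteq W_k$ whenever $\bar{j}\notin T_k$. Nothing in the axioms forces the existence of even one $\bar{j}$ for which this does not happen, and your proposed patches (iterating eliminations of $W$ against $S$, or invoking \cref{lem:adjtrees}) are only gestured at. In fact, eliminating $W$ against $S$ at a damaged coordinate $k$ can simply restore $S_{i^*}=\emptyset$, giving a cycle rather than progress, so the monovariant $(\sum|S_i|, -|I_0(S)|)$ is not obviously improved. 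Because the elimination axiom is a purely local statement, it is hard to see how to close this gap without importing some global information.

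The paper's proof sidesteps exactly this obstacle by going global: it restricts to the support $[m]$ of $T$, applies the existing \cref{thm:TOMtreetope} to the minor $\tilde{\mathcal{T}}|_{[m],[\bar d]}$ to find a spanning tree $T'$ on $[m]\sqcup[\bar d]$ that $T$ refines, then invokes \cref{thm:treedegrees} (which encodes the bijection with fine mixed subdivisions) to locate the \emph{unique} honest tree-type $T_0$ whose left degree vector is $LD(G(T'))+\mathbf{1}_{[n]\setminus[m]}$, and finally uses compatibility via \cref{lem:compat}(b) to force $G(T')\subseteq G(T_0)$. The degree-vector bijection is the piece of global structure your local elimination argument is missing; if you want to rescue your approach you would essentially have to re-derive something equivalent, at which point you might as well cite it directly as the paper does.
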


\begin{proof}
    Assume without loss of generality that the first $m$ coordinates of $T$ are non-empty and the last $n-m$ coordinares are empty (in other words, we assume that the \emph{support} of $T$ is $[m]$). If $m = 0$ or $m = n$ there is nothing to prove, so we assume $1\leq m \leq n-1$. Using the same proof as \cite{TropicalOM}, Theorem 4.6, we can use a series of elimination between semitypes supported on $[m]$ to obtain a semitype $T'$ for which $T$ is a refinement, such that $G(T')$ is a spanning tree on $[m]\sqcup [\bar{d}]$. 

    Since $\tilde{\mathcal{T}}$ contains a (non-extended) generic tropical oriented matroid and hence encodes a fine mixed subdivision, by \cref{thm:treedegrees} there is a unique tree-type $T_0$ such that $LD(G(T_0)) = LD(G(T')) + \mathbf{1}_{[n]\setminus[m]}$. Since $T'$ is compatible with $T_0$, by \cref{lem:compat}(b) we must have $G(T')\subseteq G(T_0)$, and hence $T^+ = T_0$ is a desired honest type.
\end{proof}

\subsection{Linkage matching fields and tope arrangements}

\emph{Matching fields} are first studied by Strumfels and Zelevinsky in \cite{MaxMinors} as a way to understand the Newton polytope of the product of all maximal minors of a rectangular matrix.

\begin{definition}[\cite{Fields}]
    For positive integers $n' \geq d$, a \emph{$(n', d)$-matching field} $\mathcal{M}$ is a collection of bijections (or \emph{right semi-matchings}) $M_\sigma$ between $\sigma$ and $[\bar{d}]$, one for each $d$-element subset $\sigma\subseteq [n']$. 

    For convenience, we will also treat $M_\sigma$ as a perfect matching $G(M_\sigma)$ between $\sigma$ and $[\bar{d}]$ in $K_{n', d}$.
\end{definition}

If one write each $M_\sigma$ as an $n'\times d$ indicator matrix, it is not difficult to see that the sum of all matrices of a particular matching field corresponds to a lattice point in the Newton polytope. It was observed in \cite{MaxMinors} that the vertices of this polytope (corresponding to \emph{coherent} matching fields) must satisfy the \emph{linkage axiom}. There are many equivalent ways to state the linkage axiom:

\begin{theorem}[\cite{Fields}, Theorem 2 and Lemma 10]\label{thm:linkageequiv}
    For a matching field $\mathcal{M}$, the following conditions are equivalent: \begin{enumerate}
        \item \textnormal{(Strong linkage)} For any $(d+1)$-element subset $\tau\subseteq [n']$, the union of matchings $G(M_\sigma)$ over all $\sigma\subseteq \tau$ (treated as subgraphs of $K_{n', d}$) is a spanning tree $\mathbb{T}_{\tau}$ of $\tau \sqcup [\bar{d}]$ whose vertices in $[\bar{d}]$ all have degree 2.
        \item \textnormal{(Weak linkage)} For any $(d+1)$-element subset $\tau\subseteq [n']$ and $\bar{j}\in [\bar{d}]$, there exists two elements $i, i'\in \tau$ such that $G(M_{\tau\setminus \{i\}})$ and $G(M_{\tau\setminus \{i'\}})$ differ only in the neighbor of $\bar{j}$.
        \item \textnormal{(Three-element linkage)} For any $(d+1)$-element subset $\tau\subseteq [n']$ and $i_1, i_2, i_3\in \tau$, if $M_{\tau\setminus\{i_1\}}(i_2) \neq M_{\tau\setminus\{i_3\}}(i_2)$ then $M_{\tau\setminus\{i_1\}}(i_3) = M_{\tau\setminus\{i_2\}}(i_3)$.
        \item \textnormal{(Exchange)} For any two distinct $d$-element subsets $\sigma, \sigma'\subseteq [n']$, there exist $i\in \sigma\setminus\sigma'$ and $i'\in \sigma'\setminus \sigma$ such that $M_{\sigma}(i) = M_{\sigma'}(i') = \bar{j}$ for some $\bar{j}\in [\bar{d}]$, and replacing the edge $(i, j)$ in $G(M_\sigma)$ with the edge $(i', j)$ gives the matching $G(M_{\sigma\setminus\{i\}\cup \{i'\}})$.
        \item \textnormal{(Elimination)} For any two distinct $d$-element subsets $\sigma, \sigma'\subseteq [n']$ and $i\in \sigma\cap \sigma'$ such that $M_{\sigma}(i)\neq M_{\sigma'}(i)$, there exists a $d$-element subset $\sigma'' \subseteq \sigma \cup\sigma' \setminus \{i\}$ such that $G(M_{\sigma''}) \subseteq G(M_{\sigma}) \cup G(M_{\sigma'})$.
    \end{enumerate}
\end{theorem}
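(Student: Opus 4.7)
The plan is to prove the equivalences via two overlapping cycles, reflecting the natural split between conditions $(1)$--$(3)$, which describe the union graph $\mathbb{T}_\tau \coloneqq \bigcup_{\sigma \subseteq \tau,\,|\sigma|=d} G(M_\sigma)$ on a single $(d+1)$-subset $\tau$, and $(4)$--$(5)$, which compare matchings on arbitrary pairs of $d$-subsets. I would prove $(1) \Rightarrow (2) \Rightarrow (3) \Rightarrow (1)$ and $(1) \Rightarrow (4) \Rightarrow (5) \Rightarrow (2)$.

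For $(1) \Rightarrow (2)$, the two tree-neighbors $i, i'$ of a right-vertex $\bar{j}$ in $\mathbb{T}_\tau$ split $\mathbb{T}_\tau \setminus \{\bar{j}\}$ into subtrees $A \ni i$ and $B \ni i'$; each retains right-degree $2$ at every right-vertex and has exactly one more left-vertex than right-vertex. A leaf-stripping argument shows that such a subtree admits a unique matching of its right-vertices onto its left-vertices minus any prescribed left-vertex, which forces $M_{\tau\setminus i}$ and $M_{\tau\setminus i'}$ to coincide outside the edge at $\bar{j}$. For $(2) \Rightarrow (3)$, given $M_{\tau\setminus i_1}(i_2) \neq M_{\tau\setminus i_3}(i_2)$, set $\bar{j} \coloneqq M_{\tau\setminus i_1}(i_2)$; the weak-linkage pair for $\bar{j}$ must be $\{i_1, i_2\}$ (otherwise an element outside $\{i_1, i_2\}$ would be a third preimage of $\bar{j}$, contradicting matching injectivity together with the ``agrees off $\bar{j}$'' property), and the conclusion $M_{\tau\setminus i_1}(i_3) = M_{\tau\setminus i_2}(i_3)$ then follows directly.

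The main obstacle is $(3) \Rightarrow (1)$. Let $f_a \coloneqq M_{\tau\setminus a}$ and $N(\bar{j}) \coloneqq \{k \in \tau : f_a(k) = \bar{j}$ for some $a \neq k\}$; the goal is to show $|N(\bar{j})| = 2$. The lower bound $|N(\bar{j})| \geq 2$ is immediate since the $d+1$ matchings each have some preimage of $\bar{j}$ but no matching uses its own excluded index. For the upper bound, assume $k_1, k_2, k_3 \in N(\bar{j})$ are distinct with witnesses $f_{a_l}(k_l) = \bar{j}$. When the $a_l$'s lie within $\{k_1, k_2, k_3\}$, the resulting cyclic or repeated configurations either directly violate injectivity of some $f_{a_l}$ or are ruled out by a single application of 3-linkage. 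When the $a_l$'s are all outside $\{k_1, k_2, k_3\}$, I would iteratively apply 3-linkage to triples of the form $(a_l, k_{l'}, a_{l'})$ to deduce a growing set of equalities among the values $f_a(a')$ that eventually conflict with bijectivity of some $f_{a_l}$. Once $|N(\bar{j})| = 2$ for every $\bar{j}$, edge counting gives $|E(\mathbb{T}_\tau)| = 2d$, matching a spanning tree on $2d+1$ vertices; acyclicity follows because any bipartite cycle in $\mathbb{T}_\tau$ would admit two distinct perfect matchings of its vertex set using only cycle edges, producing a third preimage of one of its right-vertices and contradicting $|N|=2$.

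For the pairwise cycle, $(1) \Rightarrow (4)$ proceeds by analyzing the path/cycle decomposition of $G(M_\sigma) \cup G(M_{\sigma'})$: paths have endpoints in $\sigma \triangle \sigma'$, and a length-$2$ alternating path $i - \bar{j} - i'$ directly provides the exchange triple, with $\tau = \sigma \cup \{i'\}$ and $(1) \Rightarrow (2)$ supplying the swap identity. When no length-$2$ path exists, I would induct on $|\sigma \triangle \sigma'|$ by first performing an exchange along a longer alternating path to reduce the symmetric difference; securing the length-$2$ path in the base case is a secondary obstacle, which I expect requires combining the alternation of the paths with the unique-matching property derived from strong linkage. $(4) \Rightarrow (5)$ follows by iterated exchange, producing a chain of $d$-subsets whose matchings stay inside $G(M_\sigma) \cup G(M_{\sigma'})$ and eventually landing on a $\sigma'' \subseteq \sigma \cup \sigma' \setminus \{i\}$. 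Finally, $(5) \Rightarrow (2)$ specializes elimination to $\sigma = \tau\setminus p$ and $\sigma' = \tau\setminus p'$ within a $(d+1)$-subset $\tau$: the forced $\sigma''$ must be another $\tau\setminus p''$, and comparing these three matchings reconstructs the weak-linkage pair at the $\bar{j}$ where $M_\sigma$ and $M_{\sigma'}$ first differ.
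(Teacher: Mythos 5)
The paper does not prove this statement; it cites it as Theorem 2 and Lemma 10 of \cite{Fields} and moves on. So there is no in-paper proof to compare your proposal against, and I can only assess it on its own terms.

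Two of your steps have real problems. The $(1)\Rightarrow(4)$ step cannot succeed as planned, and in fact condition $(4)$ as transcribed in the paper appears not to follow from $(1)$. Take $n'=3$, $d=2$, and $M_{\{1,2\}}=\{(1,\bar 1),(2,\bar 2)\}$, $M_{\{2,3\}}=\{(2,\bar 1),(3,\bar 2)\}$, $M_{\{1,3\}}=\{(1,\bar 1),(3,\bar 2)\}$. Then $\mathbb{T}_{\{1,2,3\}}$ is the path $1-\bar 1-2-\bar 2-3$, so strong linkage holds; but for $\sigma=\{1,2\}$, $\sigma'=\{2,3\}$ the only candidate pair is $i=1\in\sigma\setminus\sigma'$, $i'=3\in\sigma'\setminus\sigma$, and $M_\sigma(1)=\bar 1\neq\bar 2=M_{\sigma'}(3)$. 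There is no length-$2$ alternating path joining $\sigma\setminus\sigma'$ to $\sigma'\setminus\sigma$, so your induction on $|\sigma\triangle\sigma'|$ has nothing smaller to fall back on, and "perform an exchange along a longer path" is circular since exchange is what you are trying to establish. Almost certainly the axiom should read $i\in\sigma$ rather than $i\in\sigma\setminus\sigma'$; with that reading you can take $i'\in\sigma'\setminus\sigma$ arbitrary, set $\bar j=M_{\sigma'}(i')$ and $i=M_\sigma^{-1}(\bar j)$, and deduce the swap identity from the tree structure of $\mathbb{T}_{\sigma\cup\{i'\}}$. You should flag this discrepancy rather than try to prove the statement as written.

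The other hard step, $(3)\Rightarrow(1)$, remains a sketch at exactly the point where the content lives: the case analysis on how the witnesses $a_\ell$ sit relative to $\{k_1,k_2,k_3\}$, and the claim that iterating three-element linkage "eventually conflicts with bijectivity," are asserted rather than carried out. Your $(1)\Rightarrow(2)$ leaf-stripping argument and the acyclicity observation (a cycle $C$ in $\mathbb{T}_\tau$ would force $M_{\tau\setminus a}$, for $a$ a left vertex of $C$, to match the $|C\cap[\bar d]|$ right vertices of $C$ into only $|C\cap[\bar d]|-1$ available left vertices) are both sound. In $(2)\Rightarrow(3)$, the claim that the weak-linkage pair at $\bar j=M_{\tau\setminus i_1}(i_2)$ must be $\{i_1,i_2\}$ needs an actual derivation (consider: what rules out a pair $\{p,p'\}$ disjoint from $\{i_1,i_2\}$ with $\bar j$ having three or more distinct preimages across the family $\{M_{\tau\setminus a}\}$?), and in $(5)\Rightarrow(2)$ the final "comparing these three matchings reconstructs the weak-linkage pair" is not yet an argument.
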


A matching field that satisfy any of the conditions above is called \emph{linkage}. Unless otherwise specified, we will use condition (1) as the default linkage axiom from now on. The tree $\mathbb{T}_{\tau}$ will be called a \emph{linkage covector}.\footnote{Also called a \emph{linkage tree} in \cite{MaxMinors}, or a \emph{linkage pd-graph} in \cite{TropicalToOM}.}

Loho and Smith (\cite{FieldLattice}) generalized matching fields to allow for elements of $[\bar{d}]$ to appear multiple times:

\begin{definition}[\cite{FieldLattice}, Definition 2.1 and 3.6]
    Given a $d$-tuple $v = (v_{\bar{1}}, \dots, v_{\bar{d}})$ of positive integers, let $k = \sum_{\bar{j}\in[\bar{d}]} v_{\bar{j}}$. An \emph{$(n', d)$-tope field $\mathcal{M}$ of type $v$} is a collection of maps (or \emph{partial topes}) $M_{\sigma}$ from $\sigma$ to $[\bar{d}]$, one for each $k$-element subset of $\sigma\subseteq[n']$, such that $RD(G(M_{\sigma})) = v$. We say that $k$ is the \emph{thickness} of $\mathcal{M}$; if $k = n'$, we say that the tope field is \emph{maximal}, which consists of a single (full) tope.

    A tope field of type $v$ is \emph{linkage} if for any $(k+1)$-element subset $\tau\subseteq [n']$, the union of $G(M_\sigma)$ over all $\sigma\subseteq \tau$ is a tree $\mathbb{T}_{\tau}(v)$ whose right degree vector is equal to $v + \mathbf{1}_{\bar{d}}$.
\end{definition}

Note that a (linkage) matching field is the same as a (linkage) tope field of type $\mathbf{1}_{\bar{d}}$. Through this generalization, one can construct a collection of compatible (maximal) topes from a linkage matching field.

\begin{definition}[\cite{FieldLattice}, Definition 3.30]
    An \emph{$(n', d)$-tope arrangement} $\mathcal{T}$ is a collection of pairwise compatible topes $T_v$, one for every lattice point $v \in n'\Delta^{d-1}\cap (\mathbb{Z}^+)^{d}$, such that $RD(G(T_v)) = v$ for each $v$. We call $v$ the \emph{position} of $T_v$.
\end{definition}

\begin{theorem}[\cite{FieldLattice}, Theorem 3.32]\label{thm:topearr}
    Linkage $(n',d)$-matching fields and $(n',d)$-tope arrangements are cryptomorphic objects.
\end{theorem}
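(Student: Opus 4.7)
The plan is to establish the cryptomorphism by constructing maps in both directions between linkage matching fields and tope arrangements, verifying these are mutually inverse, and checking that they preserve the respective defining axioms.

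For the forward direction (linkage matching field to tope arrangement), I would proceed by iterated thickening of linkage tope fields. The matching field $\mathcal{M}$ itself is a linkage tope field of type $\mathbf{1}_{[\bar{d}]}$ and thickness $d$. The crucial step is: given a linkage tope field of type $v$ and thickness $k=\sum_{\bar{j}} v_{\bar{j}}$, together with a choice of $\bar{j}\in[\bar{d}]$, produce a linkage tope field of type $v+e_{\bar{j}}$ and thickness $k+1$. For each $(k+1)$-subset $\tau$, the linkage covector $\mathbb{T}_{\tau}(v)$ has right degree $v+\mathbf{1}_{[\bar{d}]}$, and from its local structure at $\bar{j}$ one can read off a partial tope $\tau \to [\bar{d}]$ of type $v+e_{\bar{j}}$. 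After verifying that this new collection is again linkage, iterate the thickening through intermediate types until the thickness reaches $n'$, at which point each tope field of type $v$ consists of a single tope $T_v$. Pairwise compatibility of the $T_v$ is then inherited from the compatibility structure of the intermediate linkage covectors with $\mathcal{M}$.

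For the backward direction (tope arrangement to linkage matching field), I would assign to each $d$-subset $\sigma\subseteq[n']$ a matching $M_\sigma$ by restricting a suitable tope to $\sigma$. Specifically, pick any $v$ such that the restriction $T_v|_\sigma \colon \sigma\to [\bar{d}]$ is a bijection, and set $M_\sigma := T_v|_\sigma$. Existence of such a $v$ follows from a pigeonhole-type argument on the possible right-degree profiles of restricted topes as $v$ varies. Well-definedness is then guaranteed by \cref{lem:compat}(a): if two topes $T_v$ and $T_{v'}$ both restrict to perfect matchings between $\sigma$ and $[\bar{d}]$, these restrictions are acyclic subgraphs with identical left and right degree vectors, so incompatibility would follow unless they coincide---contradicting the compatibility of $T_v$ and $T_{v'}$, which descends to compatibility of the restrictions. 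To verify strong linkage, fix a $(d+1)$-subset $\tau$ and analyze the union $\bigcup_{i\in\tau} G(M_{\tau\setminus\{i\}})$ on $\tau\sqcup[\bar{d}]$: the right-degree-2 condition at each vertex of $[\bar{d}]$ is an edge-count, while acyclicity is forced by the compatibility of the topes producing the constituent matchings, together yielding a spanning tree.

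The main obstacle I anticipate is the preservation of linkage under the thickening step in the forward direction. From linkage of a thickness-$k$ tope field on all $(k+1)$-subsets, one must deduce linkage of the new thickness-$(k+1)$ tope field on all $(k+2)$-subsets. The combinatorial amalgamation of the linkage covectors $\mathbb{T}_\tau(v)$ over various $\tau$ into the correct tree structure for $\mathbb{T}_{\tau'}(v+e_{\bar{j}})$ on a $(k+2)$-subset $\tau'$ is delicate, and likely requires a careful case analysis based on the possible shapes of the underlying compatibility graphs, together with \cref{lem:compat}. Once both directions are established, the verification that the two constructions are mutually inverse reduces to tracing through the definitions at thickness $n'$ and at thickness $d$.
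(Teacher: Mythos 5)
The paper does not prove this theorem; it cites it to \cite{FieldLattice} and only recalls the relevant machinery (iterated amalgamation) in \cref{sec:operations}. Your forward direction is exactly that iterated amalgamation: your ``read off a partial tope of type $v+e_{\bar{j}}$ from the local structure at $\bar{j}$'' is the $\bar{j}$-pull $(\mathbb{T}_\tau(v))^{\rightarrow\bar{j}}$. You correctly flag the single obstacle that matters---that one amalgamation step preserves linkage---but you leave it unresolved; this is precisely \cite{FieldLattice}, Proposition 3.15, and without an argument for it your forward construction is incomplete. That step is not a routine case analysis: it is the content of the theorem.

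Two concrete issues with your backward direction. First, the existence of a $v$ with $T_v|_\sigma$ a bijection $\sigma\to[\bar d]$ is not a pigeonhole fact. It follows cleanly from the deletion operation the paper develops in \cref{sec:operations}: delete all $i\notin\sigma$ from the tope arrangement, which produces an $(|\sigma|,d)$-tope arrangement on $\sigma\sqcup[\bar d]$, and the tope at position $\mathbf 1_{[\bar d]}$ there is the restriction of some original $T_v$. Your well-definedness argument via \cref{lem:compat}(a) is fine. Second, your verification of strong linkage is a genuine gap: you assert that acyclicity of $\bigcup_{i\in\tau}G(M_{\tau\setminus\{i\}})$ is ``forced by the compatibility of the topes.'' Pairwise compatibility of the $M_{\tau\setminus\{i\}}$ rules out a cycle supported on only \emph{two} of the matchings, but says nothing directly about a cycle that uses edges from three or more. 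Likewise ``the right-degree-2 condition is an edge-count'' does not hold up: each $\bar j$ receives one edge from each of the $d+1$ matchings, and nothing a priori pins the number of \emph{distinct} such edges at $2$ rather than anything between $1$ and $d+1$. Establishing that the union is a tree with right degrees all equal to $2$ is the substance of the backward direction, and your sketch does not supply it.
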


The process of obtaining a tope arrangement from a linkage matching field is called \emph{iterated amalgamation}, which we will describe in \cref{sec:operations}.

\subsection{Pointed matching fields and matching ensembles}

While there are many similarities between topes from a generic tropical oriented matroid (c.f. \cref{rem:topes}) and tope arrangements from linkage matching fields, we must note that the topes in a tope arrangement are only defined for ``interior'' lattice points of a dilated simplex. We can resolve this discrepancy by working with \emph{pointed} matching fields.

\begin{definition}[\cite{MaxMinors}, Example 1.4]
    For positive integers $n$ and $d$, a \emph{pointed $(n+d, d)$-matching field} is a matching field whose left indices are $[n]\sqcup [\underline{d}]$, such that whenever some $\underline{j} \in [\underline{d}]$ is in a $d$-element subset $\sigma$, we have $M_\sigma(\underline{j}) = \bar{j}$.
\end{definition}

Given a matching $M_{\sigma}$ between $\sigma \subseteq [n]\sqcup [\underline{d}]$ and $[\bar{d}]$, let $\sigma' = \sigma\cap [n]$, $\underline{\sigma} = \sigma\cap[\underline{d}]$, and let $\bar{\sigma}$ be the subset of $[\bar{d}]$ corresponding to $\underline{\sigma}$. If we remove all elements of $[\bar{d}]$ from $M_{\sigma}$, we obtain a matching between $I = \sigma'$ and $\bar{J} = [\bar{d}]\setminus \bar{\sigma}$. As $\sigma$ ranges over all $d$-element subsets, $(I, \bar{J})$ ranges over all pairs of subsets of $[n]$ and $[\bar{d}]$ of equal size. Therefore, pointed matching fields are cryptomorphic to \emph{matching stacks}, where we record a matching for every pair of equal-size subsets. 

\begin{definition}[\cite{FieldLattice}, Definition 5.1]
    For positive integers $n$ and $d$, a \emph{$(n,d)$-matching stack}\footnote{Also called a \emph{matching field} (\cite{Ensembles}, Definition 4.1), which we will avoid using to prevent obvious collision in names.} $\mathcal{M}$ is a collection of bijections (or \emph{partial matchings}) $M_{I, \bar{J}}$ between $I$ and $\bar{J}$ for every pair of subsets $I\subseteq [n]$ and $\bar{J}\subseteq [\bar{d}]$ such that $|I| = |\bar{J}|$. 

    A matching stack is a \emph{matching ensemble} if the following conditions hold: \begin{enumerate}
        \item (Closure) Given $I'\subseteq I$ and $\bar{J}'\subseteq \bar{J}$, if $G(M_{I, \bar{J}})$ contains a matching between $I'$ and $\bar{J}'$, then $G(M_{I', \bar{J}'}) \subseteq G(M_{I, \bar{J}})$. (Equivalently, all matchings are pairwise compatible.)
        \item (Left linkage) Given subsets $I\subseteq [n]$ and $\bar{J}\subseteq [\bar{d}]$ such that $|I| = |\bar{J}| + 1$, the union of $G(M_{I', \bar{J}})$ over all $I'\subset I$ is a tree $\mathbb{T}_{I, \bar{J}}$ whose right degree vector is equal to $\mathbf{2}_{\bar{J}}$.
        \item (Right linkage) Given subsets $I\subseteq [n]$ and $\bar{J}\subseteq [\bar{d}]$ such that $|I| + 1 = |\bar{J}|$, the union of $G(M_{I, \bar{J}'})$ over all $\bar{J}'\subset \bar{J}$ is a tree $\bar{\mathbb{T}}_{I, \bar{J}}$ whose left degree vector is equal to $\mathbf{2}_{I}$.
    \end{enumerate}

    The trees $\mathbb{T}_{I, \bar{J}}$ and $\bar{\mathbb{T}}_{I, \bar{J}}$ are called \emph{(left/right) linkage covectors}. For convenience, we also define a \emph{matching left-semi-ensemble} to be a matching stack that only satisfies the first two conditions.
\end{definition}

The process of obtaining a matching stack from a pointed matching field will be referred to as \emph{reduction} process, and the inverse as the \emph{completion} (also see \cite{FieldLattice}, Section 5).

In the language of Newton polytopes, matching stacks correspond to taking a product of \emph{all} minors of a rectangular matrix, not just the maximal ones. We also note that the aforementioned cryptomorphism is essentially the same as the one between matroids and \emph{bimatroids} (also known as \emph{linking systems}, see \cite{Bimatroids}, Theorem 1 and \cite{LinkingSys}, Theorem 3.2).

This cryptomorphism also allow us to transfer the linkage axiom from pointed matching fields to matching stacks.

\begin{theorem}[\cite{FieldLattice}, Theorem 5.5, corrected\footnote{The original theorem statement mistakenly omitted the closure axiom.}]\label{thm:fieldstack}
    Linkage $(n+d, d)$-pointed matching fields are cryptomorphic to $(n,d)$-matching left-semi-ensembles.
\end{theorem}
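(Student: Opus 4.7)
The plan is to verify that the reduction/completion bijection---sending a $d$-subset $\sigma \subseteq [n]\sqcup[\underline{d}]$ to $(I,\bar{J})=(\sigma\cap[n], [\bar{d}]\setminus\bar{\underline{\sigma}})$ with $M_{I,\bar{J}} = M_\sigma|_I$---carries strong linkage on the pointed matching field side to closure and left linkage on the matching stack side, and vice versa. The central object in both directions is the linkage tree $\mathbb{T}_\tau$ for a $(d+1)$-subset $\tau$ of the form $I\sqcup\underline{[\bar{d}]\setminus \bar{J}}$ (with $|I|=|\bar{J}|+1$), which should encode both the PMF's strong linkage on $\tau$ and the matching stack's left linkage on $(I, \bar{J})$.

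\textbf{Forward direction.} Assume $\mathcal{M}$ is a linkage pointed matching field. For closure---that $M_{I', \bar{J}'} = M_{I,\bar{J}}|_{I'}$ whenever $I'\subseteq I$, $\bar{J}'\subseteq\bar{J}$, $|I'|=|\bar{J}'|$, and $M_{I,\bar{J}}(I')=\bar{J}'$---I would interpolate between $\sigma$ and $\sigma'$ via single-element swaps, each removing some $i_{t+1}\in I_t\setminus I'$ and adding $\underline{\bar{j}}_{t+1}$ with $\bar{j}_{t+1}=M_{I_t,\bar{J}_t}(i_{t+1})\in\bar{J}_t\setminus\bar{J}'$. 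Applying three-element linkage (\cref{thm:linkageequiv}(3)) to $\tau_t = \sigma_t\cup\{\underline{\bar{j}}_{t+1}\}$ at $i_1=\underline{\bar{j}}_{t+1}$, $i_3=i_{t+1}$, and any $i_2$ in the shared domain shows $M_{\sigma_t}$ and $M_{\sigma_{t+1}}$ agree at $i_2$: a disagreement would force $M_{\tau_t\setminus\{i_2\}}$ to map both $\underline{\bar{j}}_{t+1}$ (canonically) and $i_{t+1}$ to $\bar{j}_{t+1}$, which is impossible. Iterating gives $M_{\sigma'}|_{I'}=M_\sigma|_{I'}$. For left linkage with $|I|=|\bar{J}|+1$, apply strong linkage to $\tau = I\sqcup\underline{[\bar{d}]\setminus\bar{J}}$, yielding a tree $\mathbb{T}_\tau$ on $\tau\sqcup[\bar{d}]$ with right degrees all 2. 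The pointed structure makes each $\underline{\bar{j}}\in\underline{\tau}$ a leaf, and after removing these leaves each $\bar{j}\in[\bar{d}]\setminus\bar{J}$ becomes a leaf in turn. The resulting tree on $I\sqcup\bar{J}$ equals $\mathbb{T}_{I,\bar{J}}=\bigcup_{i\in I}G(M_{I\setminus\{i\},\bar{J}})$: if $a, b\in I$ are the two neighbors of $\bar{j}\in\bar{J}$ in $\mathbb{T}_\tau$, then $M_{\tau\setminus\{b\}}^{-1}(\bar{j})\in\{a,b\}\cap(\tau\setminus\{b\})=\{a\}$, placing $(a,\bar{j})\in G(M_{I\setminus\{b\},\bar{J}})\subseteq\mathbb{T}_{I,\bar{J}}$, and symmetrically $(b,\bar{j})\in\mathbb{T}_{I,\bar{J}}$.

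\textbf{Reverse direction.} Given a matching left-semi-ensemble and a $(d+1)$-subset $\tau$, set $(I,\bar{J})=(\tau\cap[n], [\bar{d}]\setminus\bar{\underline{\tau}})$ so $|I|=|\bar{J}|+1$. The matchings $M_{\tau\setminus\{i\}}$ for $i\in I$ contribute $\mathbb{T}_{I,\bar{J}}$ (by left linkage) and the canonical leaf edges $(\underline{\bar{j}},\bar{j})$ for $\underline{\bar{j}}\in\underline{\tau}$; for each $\bar{j}_0\in\bar{\underline{\tau}}$, $M_{\tau\setminus\{\underline{\bar{j}}_0\}}$ further contributes $M_{I,\bar{J}\cup\{\bar{j}_0\}}$, which closure splits as $G(M_{I\setminus\{i_*\},\bar{J}})\cup\{(i_*,\bar{j}_0)\}$ (with $i_*=M_{I,\bar{J}\cup\{\bar{j}_0\}}^{-1}(\bar{j}_0)$), the first part being absorbed into $\mathbb{T}_{I,\bar{J}}$. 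Hence $\mathbb{T}_\tau = \mathbb{T}_{I,\bar{J}}\cup\bigcup_{\bar{j}_0\in\bar{\underline{\tau}}}\{(\underline{\bar{j}}_0,\bar{j}_0),(i_*(\bar{j}_0),\bar{j}_0)\}$; each $\bar{j}_0\in\bar{\underline{\tau}}$ is joined by exactly two edges to the fresh $\underline{\bar{j}}_0$ and to $i_*(\bar{j}_0)\in I$, so $\mathbb{T}_\tau$ is a spanning tree on $\tau\sqcup[\bar{d}]$ with right degrees 2.

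The trickiest step is identifying the projected $\mathbb{T}_\tau$ with $\mathbb{T}_{I,\bar{J}}$ in the forward left linkage argument: a priori an edge of $\mathbb{T}_\tau$ in $I\times\bar{J}$ might only be witnessed by the ``extra'' matchings $M_{\tau\setminus\{\underline{\bar{j}}\}}$ for $\underline{\bar{j}}\in\underline{\tau}$, which do not appear in $\mathbb{T}_{I,\bar{J}}$. The degree-2 condition on each $\bar{j}\in\bar{J}$ in $\mathbb{T}_\tau$, combined with the observation $b\notin\tau\setminus\{b\}$, is exactly what resolves this gap.
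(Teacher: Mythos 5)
Your proof is correct, and it diverges from the paper's in a few instructive ways. The paper derives both closure and left linkage from a single object: it applies strong linkage once to get the covector $\mathbb{T}_{I\sqcup\underline{J}}$ and then reads off closure from the fact that a type-(2) matching minus its $\bar{j}$-edge must coincide with the type-(1) matching on the same vertex set (else the covector would contain a cycle), and reads off left linkage by stripping the $\underline{J}$- and $\bar{J}$-leaves. You instead split the work: for closure you interpolate $\sigma \leadsto \sigma'$ along single-element swaps and invoke three-element linkage (\cref{thm:linkageequiv}(3)) at each step, exploiting the pointed edges $\underline{\bar{j}}\mapsto\bar{j}$ to force a contradiction in an injective matching; for left linkage you argue directly about $\mathbb{T}_\tau$ as the paper does. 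Your closure argument is genuinely different and more elementary --- it never needs the covector to be a tree, only the local three-element condition plus the pointed structure --- at the cost of an inductive bookkeeping step (that $\bar j_{t+1}\notin\bar J'$, which follows from the agreement $M_{\sigma_t}|_{I'}=M_\sigma|_{I'}$ established at the previous step). Your left-linkage argument is essentially the paper's, but you explicitly close a gap the paper waves at (``it is easy to see that this tree is the union of all type-(1) matchings''): you verify both inclusions, with the nontrivial one ($\text{reduced }\mathbb{T}_\tau\subseteq\mathbb{T}_{I,\bar J}$) handled cleanly via the degree-2 condition and $b\notin\tau\setminus\{b\}$. Finally, where the paper dispatches the reverse direction with ``run the argument above in reverse,'' you give the explicit reconstruction of $\mathbb{T}_\tau$ from $\mathbb{T}_{I,\bar J}$, the canonical leaf edges, and the closure-split edges $(i_*(\bar j_0),\bar j_0)$, with an edge count confirming the tree and right-degree-2 structure. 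All steps check out.
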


\begin{proof}
    Given a linkage pointed matching field, consider a linkage covector $\mathbb{T}_{I\sqcup \underline{J}}$ where $I\subseteq [n], \underline{J}\subseteq [\underline{d}]$, and $|I \sqcup \underline{J}| = d + 1$. Due to the matching field being pointed, all vertices in $\underline{J}$ are leaves of the covector, and are only connected to their counterpart in $\bar{J}\subseteq [\bar{d}]$. This tree is a union of $d+1$ right semi-matchings, and after reduction the matchings fall into one of the following two types:\begin{enumerate}
        \item A partial matching between $I\setminus \{i\}$ and $[\bar{d}]\setminus \bar{J}$ for each $i\in I$; and
        \item A partial matching between $I$ and $[\bar{d}]\setminus \bar{J} \cup \{\bar{j}\}$ for each $\underline{j}\in \underline{J}$.
    \end{enumerate}
    If we remove the edge connected to $\bar{j}$ (say $(i, \bar{j})$) from a type-(2) partial matching $M_2 = M_{I, [\bar{d}]\setminus \bar{J} \cup \{\bar{j}\}}$, then we obtain a partial matching $M_2'$ on the same vertex set as a type-(1) partial matching $M_1 = M_{I\setminus\{i\}, [\bar{d}]\setminus\bar{J}}$. Since the union of all $d+1$ reduced matchings is the tree $T_{I\sqcup \underline{J}}$ with all vertices in $\underline{J}$ removed, we must have $M_2' = M_1$ or else the union of $M_1$ and $M_2$ is not acyclic. By repeating this argument for every $I, \underline{J}$, and $\underline{j}\in \underline{J}$, we see that the closure axiom holds for every single-edge removal, and hence in general by transitivity.

    Also, note that after reduction, all vertices of $\mathbb{T}_{I\sqcup \underline{J}}$ in $\bar{J}$ are now leaves (with the single edge coming only from the corresponding type-(2) matching). Hence, by removing all edges connected to $\bar{J}$, we obtain a new tree $\mathbb{T}_{I, [\bar{d}]\setminus\bar{J}}$. It is easy to see that this tree is the union of all type-(1) matchings with the correct right degree vector, and hence the left linkage axiom is also satisfied.

    The reverse implication (completion of a matching left-semi-ensemble is a linkage pointed matching field) can be similarly shown by running the argument above in reverse. 
\end{proof}

\begin{remark}
    From this proof, we can see that the closure and left linkage axioms can be combined into one axiom, which more directly corresponds to the linkage axiom in a linkage pointed matching field: \begin{itemize}
        \item (Extended left linkage) Given subsets $I\subseteq [n]$ and $\bar{J}\subseteq [\bar{d}]$ such that $|I| = |\bar{J}| + 1$, the union of $G(M_{I', \bar{J}})$ over all $I'\subset I$ \emph{as well as} $G(M_{I, \bar{J}'})$ over all $\bar{J}'\supset \bar{J}$ is a tree $\tilde{\mathbb{T}}_{I, \bar{J}}$ whose right degree vector is equal to $\mathbf{2}_{\bar{J}} + \mathbf{1}_{[\bar{d}]\setminus\bar{J}}$. The tree $\tilde{\mathbb{T}}_{I, \bar{J}}$ will be called an \emph{extended left linkage covector}.
    \end{itemize}
    Note that assuming closure holds, each of the additional matchings $M_{I, \bar{J}'}$ only contributes one leaf edge to the extended left linkage covector.
    
    We can also define an extended right linkage axiom for matching ensembles symmetrically. When translated back to pointed matching fields, this axiom is equivalent to following condition: \begin{itemize}
        \item (Strong inverse linkage) For any $(d-1)$-element subset $\rho\subseteq [n']$, the union of matchings $G(M_{\sigma})$ over all $\sigma \supseteq \rho$ is a forest $\bar{\mathbb{T}}_{\rho}$ whose vertices in $\rho\cap [n]$ all have degree 2 and the other vertices in $[n']$ all have degree 1.
    \end{itemize}
    It is unclear whether this property has additional implications for general matching fields.
\end{remark}

\begin{figure}[h!]
    \includegraphics{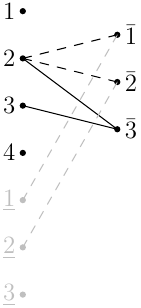}
    \caption{A linkage covector $\mathbb{T}_{\{2, 3, \underline{1}, \underline{2}\}}$ of a pointed matching field (all edges/vertices), an extended left linkage covector $\tilde{\mathbb{T}}_{\{2, 3\}, \{\bar{3}\}}$ (black edges/vertices only), and a left linkage covector $\mathbb{T}_{\{2, 3\}, \{\bar{3}\}}$ (solid edges only) of the corresponding matching left-semi-ensemble.}
\end{figure}

Under this cryptomorphism, we can obtain \emph{extended $(n, d)$-tope arrangements} from matching left-semi-ensembles in the same way as \cref{thm:topearr}, where there is a tope $T_v$ for \emph{every} lattice point $v\in n\Delta^{d-1}$, not just the ones with positive coordinates. (Note that when $n' = n+d$, the positive lattice points of $n'\Delta^{d-1}$ are just the lattice points in $n\Delta^{d-1}$ translated by $\mathbf{1}_{\bar{d}}$.) 

\begin{figure}[h!]
    \centering
    \includegraphics{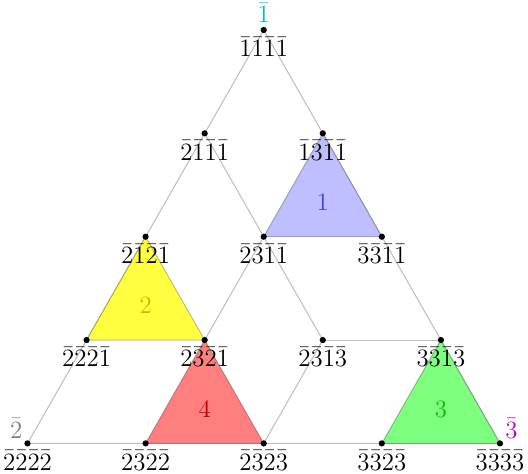}

    \caption{An extended tope arrangement obtained from the fine mixed subdivision in \cref{fig:FMSexample}.}
\end{figure}

All properties of tope arrangements can be translated into similar properties for extended tope arrangements, so whenever we cite a result about tope arrangements in later sections, it will be automatically translated into one about extended tope arrangements without proof.


Matching ensembles are first studied by Oh and Yoo (\cite{Ensembles}) as another way to encode generic tropical oriented matroids.

\begin{conjecture}[\cite{Ensembles}, Theorem 5.4]\label{thm:TOMensembles}
    Generic $(n,d)$-tropical oriented matroids are in bijection with $(n,d)$-matching ensembles.
\end{conjecture}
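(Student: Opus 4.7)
The plan is to exhibit explicit maps in both directions and check that the axioms on each side correspond. For the forward direction (TOM to matching ensemble), I would identify the generic TOM $\mathcal{T}$ with its collection of tree-types (equivalently, the spanning trees of the associated fine mixed subdivision of $n\Delta^{d-1}$), and for each pair $(I, \bar{J})$ with $|I| = |\bar{J}|$ I define $M_{I, \bar{J}}$ to be the unique perfect matching between $I$ and $\bar{J}$ appearing as a subgraph of some tree-type. Uniqueness is immediate from the compatibility axiom, but \emph{existence} is the first real hurdle: I must produce, for every such pair, a tree-type whose graph contains such a matching. My plan is to pass to the completion of $\mathcal{T}$ and apply the elimination axiom iteratively to assemble a semitype supported on $I$ with all coordinates drawn from $\bar{J}$, in the style of the proof of \cite{TropicalOM}, Theorem 4.6; \cref{lem:refinehonest} then extends this to an honest tree-type with the matching in its graph.

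Granting existence, the closure axiom is immediate because the tree-type witnessing $M_{I, \bar{J}}$ also witnesses every $M_{I', \bar{J}'}$ with $I' \subseteq I$ and $\bar{J}' \subseteq \bar{J}$. For left linkage, I would fix $|I| = |\bar{J}| + 1$ and inspect the union $H = \bigcup_{I' \subset I} G(M_{I', \bar{J}})$. Pairwise compatibility of the witnessing tree-types forces $H$ to be acyclic, and a degree count ($|I| \cdot |\bar{J}|$ edge-incidences distributed over $|I| + |\bar{J}|$ vertices in the prescribed way) pins down $H$ as a tree with right degree vector $\mathbf{2}_{\bar{J}}$. Right linkage follows by the symmetric argument with the roles of $[n]$ and $[\bar{d}]$ swapped.

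For the reverse direction, I would build tree-types of a TOM indexed by lattice points $v \in (n-1)\Delta^{d-1}$, guided by \cref{thm:treedegrees} which tells us to expect exactly one tree-type with $RD(G_v) = v + \mathbf{1}_{[\bar{d}]}$ per such $v$. The natural construction uses the extended left linkage covectors from (the proof of) \cref{thm:fieldstack} and assembles partial matchings of $\mathcal{M}$ according to the multiplicities recorded by $v$; the right linkage axiom of matching ensembles, which is absent from matching left-semi-ensembles, supplies the consistency needed to resolve these assemblies into spanning trees. Once $\binom{n+d-2}{n-1}$ such trees are produced, \cref{rem:oneaxiom} says it suffices to verify either tree linkage or compatibility from \cref{thm:FMSaxioms}; I expect tree linkage to be cleaner, since each internal edge of $G_v$ belongs to some $M_{I, \bar{J}}$ whose flip via left/right linkage should produce exactly the unique neighboring tree $G_{v'}$ prescribed by \cref{lem:adjtrees}.

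The hardest step in both directions is leveraging the linkage axioms to guarantee that forests built from overlapping partial matchings are in fact trees of the correct shape; I anticipate that the bulk of the technical work lies here, and that this is precisely where earlier proofs of the correspondence went wrong. Once this is arranged, mutual inversion of the two constructions reduces to the uniqueness statements already used: matchings contained in tree-types are unique by compatibility, and tree-types at a given reduced right degree $v$ are unique by \cref{thm:treedegrees}.
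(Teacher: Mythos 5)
Your forward direction (produce the matching ensemble by extraction, using elimination and \cref{lem:refinehonest} to establish existence) matches the paper's, which simply cites \cref{prop:extraction} / \cite{Ensembles}; that half of the correspondence was never in doubt. For the reverse direction you propose to verify the \emph{tree linkage} axiom of \cref{thm:FMSaxioms}, whereas the paper verifies \emph{compatibility}. The paper acknowledges your route exists (see the final remark of Section~\ref{sec:equivaxiom}: right linkage and the hexagon axiom are equivalent after deleting the Chow covector and swapping the roles of $[n]$ and $[\bar{d}]$, after which one invokes \cite{Trianguloids}, Theorem~3.6). However, your account of the key step is not correct as stated. You claim that for an internal edge $e$ of $\mathbb{T}(u)$, the neighboring tree is produced by a ``flip via left/right linkage'' of some $M_{I,\bar J}$. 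But a left-linkage flip changes only the $[n]$-endpoint of an edge (keeping $\bar j$ fixed), and a right-linkage flip changes only the $[\bar d]$-endpoint, while \cref{lem:adjtrees} forces the tree-linkage exchange $e\leftrightarrow e'$ to change \emph{both} endpoints ($e$ and $e'$ share no vertices). So matching-linkage flips do not directly translate into tree-linkage flips; the actual derivation (right linkage $\Rightarrow$ hexagon, then the induction of \cite{Trianguloids}) is precisely the technical heart you've elided. The paper sidesteps this entirely by proving \cref{prop:toperefine} — the tug-of-war argument, built on the $\nabla$-linkage covectors of \cref{prop:othertreeunion} — to show every tope refining $\mathbb{T}(u)$ lies in the arrangement, from which tree compatibility follows in one line.

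A secondary issue: you misplace where right linkage enters the construction. The graphs $\mathbb{T}(u)$ obtained by iterated amalgamation are already spanning trees for \emph{any} matching left-semi-ensemble — this is \cref{prop:treeunion} for pre-trianguloids, together with \cref{cor:pretriext}. Right linkage is not what ``resolves these assemblies into spanning trees''; it is what forces the resulting trees to be mutually compatible (equivalently, to satisfy tree linkage or the hexagon axiom), i.e., to actually tile $n\Delta^{d-1}$.
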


Given a fine mixed subdivision as encoded by a collection of trees, one can use the \emph{extraction method} (\cite{Ensembles}, Proposition 4.3) to obtain a unique matching $M_{I, \bar{J}}$ for every pair of subsets $I$ and $\bar{J}$ that is contained in some tree. (Note that the uniqueness is guaranteed by pairwise compatibility.) However, the proof that every matching ensemble comes from a fine mixed subdivision (\cite{Ensembles}, Proposition 5.3) contains a gap; see the next example. We will provide a corrected proof of this statement in \cref{sec:equivaxiom}.

\begin{example}
    Lemma 3.16 of \cite{Ensembles} claims that if there are two types $T$ and $T'$ in a (generic) tropical oriented matroid $\mathcal{T}$ and $\bar{j}, \bar{j}'\in [\bar{d}]$ such that removing all instances of $\bar{j}$ from $T$ gives the same semitype as the one from removing all instances of $\bar{j}'$ from $T'$, then $T\cup T'$ (where the union is taken coordinate-wise) is also a type of $\mathcal{T}$. We show that this is false even when there is only one instance of $\bar{j}$ and $\bar{j}'$ in $T$ and $T'$ respectively (which appears to be the only case needed in the proof of Proposition 5.3 in the same paper).

    Consider a generic $(2, 4)$-tropical oriented matroid whose tree-types are \[(\bar{1}\bar{2}\bar{3}\bar{4},\bar{1}), (\bar{2}\bar{3}\bar{4},\bar{1}\bar{2}), (\bar{3}\bar{4},\bar{1}\bar{2}\bar{3}), (\bar{4},\bar{1}\bar{2}\bar{3}\bar{4}).\] 
    By \cref{thm:TOMtreetope}(b), all other types are refinements of (at least) one of these tree-types. This matroid corresponds to the fine mixed subdivision of $2\Delta^3$ shown in \cref{fig:2d3}.

    \begin{figure}[h!]
        \centering
        \includegraphics{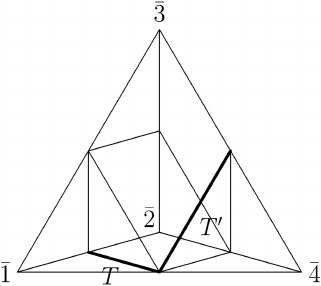}
        \caption{A fine mixed subdivision of $2\Delta^3$ with two ``adjacent'' edges but no face containing both of them.}\label{fig:2d3}
    \end{figure}

    Let $T = (\bar{2}\bar{4},\bar{1})$ and $T' = (\bar{4}, \bar{1}\bar{3})$, obtained by refining the first (or second) and the fourth (or third) tree-type, respectively. (These two types correspond to the bolded edges in the figure.) These two types are the same if one removes $\bar{2}$ from $T$ and $\bar{3}$ from $T'$, but their union $T\cup T' = (\bar{2}\bar{4}, \bar{1}\bar{3})$ is not a type as it is not compatible with the second or third tree-type (which contains a different matching between $\{1, 2\}$ and $\{\bar{2}, \bar{3}\}$).
\end{example}

\subsection{Trianguloids} Galashin et al.\ (\cite{Trianguloids}) introduced another matroid-like object called \emph{trianguloids} that are also in bijection with fine mixed subdivisions of $n\Delta^{d-1}$.\footnote{In fact, trianguloids are defined for all \emph{generalized permutohedra} that are Minkowski sums of faces of $\Delta^{d-1}$. In this paper we focus on the $n\Delta^{d-1}$ case.} While the original object is defined as a colored multi-graph, here we rephrase the definition entirely in terms of topes.

\begin{definition}[\cite{FieldLattice}, Definition 3.33]\label{def:trianguloid}
    An \emph{$(n,d)$-pre-trianguloid} $\mathcal{T}$ is a collection of \emph{topes} $T_v: [n] \to [\bar{d}]$, one for each lattice point $v\in n\Delta^{d-1}$, such that the following holds: \begin{enumerate}
        \item For each $v\in n\Delta^{d-1}$, $RD(G(T_v)) = v$ (i.e.\ $|T_v^{-1}(\bar{j})| = v_{\bar{j}}$ for each $\bar{j}\in [\bar{d}]$).
        \item (Combinatorial sector) If $v - e_{\bar{j}} = v' - e_{\bar{j}'}$ for some $v, v'\in n\Delta^{d-1}$ and $\bar{j}, \bar{j}'\in [\bar{d}]$, then $T_{v'}^{-1}(\bar{j})\subset T_{v}^{-1}(\bar{j})$. (Note that the second set is one element larger than the first.) 
        
        In other words, for each $i\in [n]$ and $\bar{j}\in [\bar{d}]$, the set $S_{i}^{(\bar{j})}: = \{v\in n\Delta^{d-1}\mid T_v(i) = \bar{j}\}$ is closed under ``moving towards $ne_{\bar{j}}$''. This set is also called a \emph{combinatorial sector}, in analogy to a sector of a tropical (pseudo-)hyperplane (\cite{FieldLattice}, Definition 3.28)
    \end{enumerate}

    The pre-trianguloid is a \emph{trianguloid} if the following also holds: \begin{enumerate}
        \item[(3)] (Hexagon axiom) For any lattice point $w\in (n+1)\Delta^{d-1}$ and $\bar{j}_1, \bar{j}_2, \bar{j}_3 \in [\bar{d}]$ such that $w_{\bar{j}_k} \geq 1$ for $k = 1,2,3$, if $T^{-1}_{w - e_{\bar{j}_1}}(\bar{j}_2) \neq T^{-1}_{w - e_{\bar{j}_3}}(\bar{j}_2)$ then $T^{-1}_{w - e_{\bar{j}_1}}(\bar{j}_3) = T^{-1}_{w - e_{\bar{j}_2}}(\bar{j}_3)$.
    \end{enumerate}
\end{definition}

Note that both extended tope arrangements and (pre-)trianguloids are both defined as a collection of topes.

\begin{theorem}[\cite{FieldLattice}, Proposition 3.34]\label{thm:extpretri}
    Extended $(n, d)$-tope arrangements are pre-trianguloids but not always trianguloids.
\end{theorem}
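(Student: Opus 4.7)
The plan is to verify the two pre-trianguloid axioms of \cref{def:trianguloid} directly from the definition of an extended tope arrangement for the first half, and then exhibit a counterexample showing the hexagon axiom may fail. Axiom (1), requiring $RD(G(T_v)) = v$, is built into the definition, so the substantive content of the first half is the combinatorial sector axiom, which I would derive from pairwise compatibility of topes via a cycle-chasing argument in the compatibility graph.

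Fix $v, v' \in n\Delta^{d-1}$ and $\bar{j}, \bar{j}' \in [\bar{d}]$ with $v - e_{\bar{j}} = v' - e_{\bar{j}'}$. Assuming $\bar{j} \neq \bar{j}'$ (else $v = v'$ and the containment is trivial), we have $v_{\bar{j}} = v'_{\bar{j}} + 1$, $v_{\bar{j}'} = v'_{\bar{j}'} - 1$, and $v_{\bar{k}} = v'_{\bar{k}}$ for $\bar{k} \notin \{\bar{j}, \bar{j}'\}$. To prove $T_{v'}^{-1}(\bar{j}) \subseteq T_v^{-1}(\bar{j})$, suppose for contradiction that there exists $i_0 \in T_{v'}^{-1}(\bar{j}) \setminus T_v^{-1}(\bar{j})$, and set $\bar{k}_1 := T_v(i_0) \neq \bar{j}$.

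I would build sequences $i_0, i_1, \ldots \in [n]$ and $\bar{k}_1, \bar{k}_2, \ldots \in [\bar{d}] \setminus \{\bar{j}\}$ inductively so that $T_v(i_s) = \bar{k}_{s+1}$ for $s \geq 0$ and $T_{v'}(i_s) = \bar{k}_s$ for $s \geq 1$. Given $i_s$, the constraint $\bar{k}_{s+1} \neq \bar{j}$ forces $|T_v^{-1}(\bar{k}_{s+1})| \leq |T_{v'}^{-1}(\bar{k}_{s+1})|$, and since $i_s \in T_v^{-1}(\bar{k}_{s+1}) \setminus T_{v'}^{-1}(\bar{k}_{s+1})$, pigeonhole yields $i_{s+1} \in T_{v'}^{-1}(\bar{k}_{s+1}) \setminus T_v^{-1}(\bar{k}_{s+1})$; set $\bar{k}_{s+2} := T_v(i_{s+1})$, which is automatically distinct from $\bar{k}_{s+1}$. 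If ever $\bar{k}_{s+2} = \bar{j}$, the directed edges $i_0 \to \bar{k}_1 \to i_1 \to \cdots \to i_{s+1} \to \bar{j}$ (alternating $T_v$ and $T_{v'}$ edges) together with $\bar{j} \to i_0$ in the compatibility graph $U(T_v, T_{v'})$ close up into a simple nontrivial directed cycle of length $2(s+2) \geq 4$; similarly, if $\bar{k}_{s+2} = \bar{k}_{s'}$ for some $1 \leq s' \leq s$, they close up into a simple cycle of length $2(s - s' + 2) \geq 4$. Either outcome contradicts pairwise compatibility of $T_v$ and $T_{v'}$. Since the colors must remain distinct in the finite set $[\bar{d}] \setminus \{\bar{j}\}$, one of these collisions is forced within $d - 1$ steps.

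For the second half, it suffices to exhibit a single extended tope arrangement failing the hexagon axiom. Since trianguloids are in bijection with fine mixed subdivisions of $n\Delta^{d-1}$ by \cite{Trianguloids}, Theorem 3.6, and the latter correspond (via extraction) to matching ensembles, any matching left-semi-ensemble failing the right linkage axiom --- equivalently, any linkage pointed matching field lacking the strong inverse linkage property from the remark after \cref{thm:fieldstack} --- induces an extended tope arrangement that is a pre-trianguloid but not a trianguloid; the classical non-coherent linkage matching fields of \cite{MaxMinors, Fields} provide such examples. The main obstacle lies in the first half: one must track carefully that the colors $\bar{k}_s$ constructed before the first collision are pairwise distinct and distinct from $\bar{j}$, so that the eventual cycle traced out in $U(T_v, T_{v'})$ is both simple and of length $\geq 4$, allowing the compatibility lemma to produce the desired contradiction.
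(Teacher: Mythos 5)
The paper does not actually prove this statement --- it is cited verbatim from Loho and Smith (\cite{FieldLattice}, Proposition 3.34) --- so your argument for the first half is a fresh one. The pigeonhole/cycle-chasing derivation of the combinatorial sector axiom directly from pairwise compatibility is correct, and the bookkeeping you flag does close: once $\bar{k}_1, \dots, \bar{k}_{m-1}$ are distinct and $\neq \bar{j}$, the indices $i_0, \dots, i_{m-1}$ must also be distinct, since a repeat $i_s = i_{s'}$ forces a repeat $\bar{k}_{s+1} = \bar{k}_{s'+1}$ via $T_v$ (and for $s = 0$ also $\bar{k}_{s'} = T_{v'}(i_{s'}) = T_{v'}(i_0) = \bar{j}$), either of which contradicts the minimality of the first collision. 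This is more elementary than the route the paper's framework suggests, namely invoking \cref{thm:topearr} to realize the extended tope arrangement by iterated amalgamation, whereupon the $\Delta$-linkage property of \cref{prop:treeunion} is immediate (the union of topes around each unit simplex is a linkage covector tree and each tope is a $\bar{j}$-pull of it), and the remark following \cref{prop:treeunion} notes that $\Delta$-linkage implies the combinatorial sector axiom. Your argument avoids amalgamation entirely, at the cost of not explaining where the tree structure comes from.

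The second half has a genuine gap. You appeal to ``the classical non-coherent linkage matching fields of \cite{MaxMinors, Fields}'' as examples of pre-trianguloids failing the hexagon axiom, but non-coherence is the wrong dichotomy. Non-coherence means only that the linkage matching field does not arise from a generic weight matrix; a non-coherent linkage matching field whose extension is still a matching ensemble produces a perfectly good trianguloid, just one corresponding to a \emph{non-regular} fine mixed subdivision. Moreover, \cite{MaxMinors} does not exhibit non-coherent linkage matching fields --- it conjectures they do not exist. What you actually need is a linkage matching field whose iterated amalgamation fails right linkage (equivalently, whose tope arrangement violates the hexagon axiom), and such an explicit example is given in \cite{FieldLattice} (the source of this proposition) and in \cite{Trianguloids}, Figure 3, reproduced in this paper in the figure following \cref{thm:trianguloids}. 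Swapping your citation for one of those concrete examples repairs the second half.
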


\begin{theorem}[\cite{Trianguloids}, Theorem 3.6]\label{thm:trianguloids}
    The $(n, d)$-trianguloids are in bijection with fine mixed subdivisions of $n\Delta^{d-1}$.
\end{theorem}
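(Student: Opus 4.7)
The plan is to give explicit maps in both directions and verify they are mutually inverse. For the forward direction, given a fine mixed subdivision of $n\Delta^{d-1}$, pass to the corresponding generic tropical oriented matroid and extract the topes $T_v$ for each lattice point $v \in n\Delta^{d-1}$ (cf.\ \cref{rem:topes}); axiom (1) of \cref{def:trianguloid} is immediate. For the combinatorial sector axiom, when $v - e_{\bar{j}} = v' - e_{\bar{j}'}$ both $v$ and $v'$ are vertices of the base simplex of the unique cell $C$ with $RD^-(C) = v - e_{\bar{j}}$, and a short degree check shows that $G(T_v)$ is obtained by rooting $G(T_C)$ at $\bar{j}$ and sending each $i\in[n]$ to its parent. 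Comparing the rootings at $\bar{j}$ and $\bar{j}'$ then gives $T_{v'}^{-1}(\bar{j}) = T_v^{-1}(\bar{j})\setminus\{i^*\}$, where $i^*$ is the neighbor of $\bar{j}$ on the $\bar{j}$-to-$\bar{j}'$ path in $G(T_C)$. For the hexagon axiom, set $v_k = w - e_{\bar{j}_k}$ and let $C_{kl}$ be the unique cell with position $w - e_{\bar{j}_k} - e_{\bar{j}_l}$; each $v_k$ is a base-simplex vertex of both $C_{kl}$ and $C_{km}$ (where $\{k,l,m\}=\{1,2,3\}$), so the rooting argument identifies $T_{v_k}^{-1}(\bar{j}_l) = N_{G(T_{C_{kl}})}(\bar{j}_l)$. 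The axiom thus becomes a pairwise-neighborhood statement about the three spanning trees $G(T_{C_{kl}})$, which is forced by their pairwise compatibility via the nontrivial-cycle argument of \cref{lem:adjtrees}.

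For the reverse direction, given a trianguloid $\{T_v\}$, for each lattice point $u \in (n-1)\Delta^{d-1}$ define
\[
    G_u := \bigcup_{\bar{j}\in [\bar{d}]} G(T_{u+e_{\bar{j}}}).
\]
Combinatorial sector yields $\bigcup_{\bar{j}} T_{u+e_{\bar{j}}}^{-1}(\bar{k}) = T_{u+e_{\bar{k}}}^{-1}(\bar{k})$, so $RD(G_u) = u + \mathbf{1}_{[\bar{d}]}$ and $G_u$ has exactly $n+d-1$ edges. The hexagon axiom is then precisely what is needed to rule out cycles, making each $G_u$ a spanning tree with $RD^-(G_u) = u$; the total count $\binom{n+d-2}{n-1}$ matches \cref{thm:treedegrees}. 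By \cref{rem:oneaxiom}, it suffices to verify compatibility between the $G_u$'s, which reduces to ruling out nontrivial cycles in each $U(G_u, G_{u'})$; such a cycle would again produce contradictory nested set inclusions via the hexagon/combinatorial sector axioms.

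The hardest step in both directions is handling the hexagon axiom. It is not a consequence of \cref{thm:extpretri} (pre-trianguloids need not be trianguloids), so it must be verified directly from compatibility of the cells' spanning trees in one direction and used delicately to force acyclicity of $G_u$ in the other. Most of the detail in either case consists of carefully tracking neighborhoods of $[\bar{d}]$-vertices across several trees and invoking the compatibility-graph machinery of \cref{lem:adjtrees,lem:compat}.
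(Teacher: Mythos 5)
This theorem is cited from \cite{Trianguloids} rather than proved in the paper; the paper's surrounding text tells you the original proof runs through the \emph{tree linkage} axiom of \cref{thm:FMSaxioms}, as recorded in \cref{rem:oneaxiom}, not the compatibility axiom you have chosen. Your route is therefore a genuinely different strategy, but it contains an error in how the axioms are allocated. You write ``The hexagon axiom is then precisely what is needed to rule out cycles, making each $G_u$ a spanning tree.'' This is false: \cref{prop:treeunion} already shows that in a \emph{pre}-trianguloid --- i.e., using only axioms (1) and (2), with no hexagon --- the union $G_u = \bigcup_{\bar{j}} T_{u+e_{\bar{j}}}$ is a spanning tree with the stated right degree vector. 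The hexagon axiom plays no role in the acyclicity of an individual $G_u$; its entire purpose is to force the $\binom{n+d-2}{n-1}$ trees to fit together (equivalently, to give tree linkage, or tree compatibility). Misattributing the hexagon here hides the fact that the step you dispatch in a sentence (``such a cycle would again produce contradictory nested set inclusions'') is in fact the whole content of the theorem: an extended tope arrangement/pre-trianguloid need not be a trianguloid (\cref{thm:extpretri}), so pairwise compatibility of the $G_u$'s does \emph{not} come for free from the pre-trianguloid structure, and deducing it from the hexagon axiom requires real work --- in this paper it is essentially \cref{prop:toperefine} together with the observation that hexagon is equivalent to $\nabla$-linkage.

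Your forward-direction reduction of the hexagon axiom to a statement about neighborhoods of $\bar{j}_2,\bar{j}_3$ in the three spanning trees $G(T_{C_{kl}})$ is a nice observation, but the claim that it is ``forced by their pairwise compatibility via the nontrivial-cycle argument of \cref{lem:adjtrees}'' needs justification: \cref{lem:adjtrees} concerns two trees differing by a single edge, whereas $G(T_{C_{12}})$, $G(T_{C_{13}})$, $G(T_{C_{23}})$ may differ by several edges. The cleaner route, matching what the paper actually sets up, is to note that the fine mixed subdivision yields a matching ensemble by extraction, whose right linkage property translates directly (after deleting the Chow covector $\Omega(w)$ and swapping the roles of $[n]$ and $[\bar{d}]$) into the three-element linkage condition of \cref{thm:linkageequiv}, which is precisely the hexagon axiom. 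In short: the high-level structure of your proof is reasonable, but both hard steps --- hexagon from an FMS, and FMS from hexagon --- are currently hand-waved, and the latter is genuinely the crux of \cref{thm:trianguloids}; either supply the tree-linkage argument (the paper's \cref{sec:treelinkage} develops machinery for this) or supply the compatibility argument via \cref{prop:toperefine}.
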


\begin{figure}[h!]
    \includegraphics{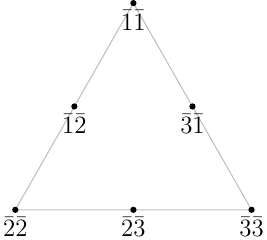}
    \caption{(Adapted from \cite{Trianguloids}, Figure 3) An extended $(2,3)$-tope arrangement (also a pre-trianguloid) that does not correspond to a fine mixed subdivision.}
\end{figure}

\section{Matroid-like Operations}\label{sec:operations}
In this section we discuss some basic operations that apply to many of the objects defined in the previous section, and use them to derive some useful properties. 

Since fine Minkowski cells, (semi-)types, left/right semi-matchings, linkage covectors, partial matchings, and (partial) topes can all be expressed as acyclic subgraphs of $K_{n, d}$, this and all following sections will be worded entirely in terms of bipartite graphs, and all objects will be treated as such. (In other words, we will drop the implicitly defined map $G$ that converts cells, types, etc.\ to graphs from the previous section.)

\subsection{Pushing and pulling} 
There is a simple combinatorial operation that connects left/right linkage covectors and (partial) left/right semi-matchings.

\begin{definition}
    Given a spanning tree $\mathbb{T}$ that is a bipartite graph between two vertex sets $I$ and $\bar{J}$, for each $i\in I$, define the \emph{$i$-push} (resp. \emph{$i$-pull}) of the tree $\mathbb{T}$ as the subgraph $\mathbb{T}^{i\rightarrow}$ (resp. $\mathbb{T}^{i\leftarrow}$ obtained by orienting every edge away (resp. towards) the vertex $i$ and taking all edges that are oriented from $\bar{J}$ to $I$. (The orientations are discarded after this operation.) Define the \emph{$\bar{j}$-push} $\mathbb{T}^{\leftarrow\bar{j}}$ and \emph{$\bar{j}$-pull} $\mathbb{T}^{\rightarrow\bar{j}}$ for each $\bar{j}\in \bar{J}$ symmetrically.
\end{definition}

\begin{figure}[h!]
    \centering
    \includegraphics{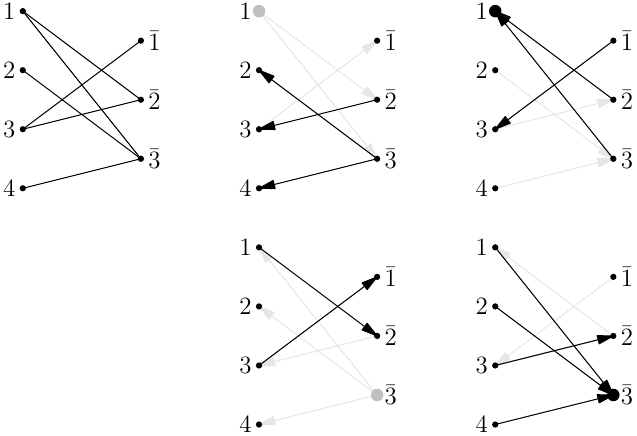}
    \caption{A spanning tree on $[4]\sqcup[\bar{3}]$ and the $1$-push, $1$-pull, $\bar{3}$-push, and $\bar{3}$-pull of the tree. The ``pushing''/``pulling'' vertex is indicated by a large gray/black dot. The arrows are only for illustrating the construction; the resulting graphs are not oriented.}
\end{figure}

\begin{remark}\label{rem:hypertree}
    The name \emph{push} and \emph{pull} might make more intuitive sense if we treat each $\mathbb{T}$ as a \emph{hypertree} with vertex set on $[\bar{d}]$ and $n$ (hyper-)edges labeled with $[n]$ (each hyper-edge connects a nonempty subset of the vertices). The $\bar{j}$-pull replaces the vertex set of each hyper-edge with the unique vertex that is closest to $\bar{j}$, and the $\bar{j}$-push replaces each set with the set of all other vertices (that are farther away from $\bar{j}$, possibly none).
\end{remark}

It is easy to verify the following properties:

\begin{lemma}\label{lem:pushpull}
    For a spanning tree $\mathbb{T}$ on $I\sqcup \bar{J}$ and for any $i\in I$ and $\bar{j}\in \bar{J}$, we have:
    \begin{enumerate}
        \item[(a)] $\mathbb{T} = \mathbb{T}^{i\rightarrow} \sqcup \mathbb{T}^{i\leftarrow} = \mathbb{T}^{\rightarrow\bar{j}} \sqcup \mathbb{T}^{\leftarrow\bar{j}}$ (in terms of the set of edges).
        \item[(b)] $LD(\mathbb{T}^{i\rightarrow}) = \mathbf{1}_{I\setminus \{i\}}, LD(\mathbb{T}^{\rightarrow\bar{j}}) = \mathbf{1}_{I}, RD(\mathbb{T}^{i\leftarrow}) = \mathbf{1}_{\bar{J}}, RD(\mathbb{T}^{\leftarrow\bar{j}}) = \mathbf{1}_{\bar{J}\setminus \{\bar{j}\}}$. In particular, the first two operations give (partial) left semi-matchings and the latter two give (partial) right semi-matchings.
        \item[(c)] If $i$ and $\bar{j}$ are connected by edge $e$, then $\mathbb{T}^{i\rightarrow} \sqcup e = \mathbb{T}^{\rightarrow\bar{j}}$ and $\mathbb{T}^{\leftarrow\bar{j}} \sqcup e = \mathbb{T}^{i\leftarrow}$.
        \item[(d)] For $i, i'\in I$, the two graphs $\mathbb{T}^{i\rightarrow}$ and $\mathbb{T}^{i'\rightarrow}$ differ by exactly one edge if and only if there is some $\bar{j}\in \bar{J}$ that is adjacent to both $i$ and $i'$. The same applies to $\mathbb{T}^{i\leftarrow}$ and $\mathbb{T}^{i'\leftarrow}$, and symmetrically for $\bar{j}, \bar{j}'\in \bar{J}$.
    \end{enumerate}
\end{lemma}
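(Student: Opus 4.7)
The plan is to reformulate the push and pull operations in terms of rooted trees, then read off everything as counts of parent edges. Orienting edges ``away from'' a vertex $v$ corresponds to the parent-to-child orientation of $\mathbb{T}$ rooted at $v$, and ``towards $v$'' to child-to-parent. Because $\mathbb{T}$ is bipartite, each edge has its parent on one side of the bipartition and its child on the other, so whether an edge is retained in a push or pull reduces to a single parity check. One preliminary point: the ``symmetrically'' clause in the definition of $\bar{j}$-push/pull must swap the roles of $I$ and $\bar{J}$, so $\mathbb{T}^{\to\bar{j}}$ and $\mathbb{T}^{\leftarrow\bar{j}}$ retain edges oriented from $I$ to $\bar{J}$; this is the only reading consistent with the degree identities in (b).

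For (a), I would root $\mathbb{T}$ at $i$ and consider any edge $e' = (i', \bar{j}')$. Exactly one of $i', \bar{j}'$ is the child of the other. If $i'$ is the child, then orienting away from $i$ gives $\bar{j}'\to i'$, putting $e'$ in $\mathbb{T}^{i\to}$, while orienting towards $i$ gives $i'\to\bar{j}'$, excluding $e'$ from $\mathbb{T}^{i\leftarrow}$. The other case is symmetric, so each edge of $\mathbb{T}$ lies in exactly one of $\mathbb{T}^{i\to}$ and $\mathbb{T}^{i\leftarrow}$; the same argument with root $\bar{j}$ gives the second equality. For (b), in the tree rooted at $i$ each $i'\in I\setminus\{i\}$ has a unique parent in $\bar{J}$, and the corresponding parent edge is the unique edge of $\mathbb{T}^{i\to}$ incident to $i'$, while $i$ itself contributes none. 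This yields $LD(\mathbb{T}^{i\to}) = \mathbf{1}_{I\setminus\{i\}}$, and the other three identities follow from the same parent-edge count at the appropriate root and orientation.

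For (c), the trees rooted at $i$ and at $\bar{j}$ differ only in the parent-child assignment of the single edge $e=(i,\bar{j})$ (rerooting across one edge preserves all other parent-child relations). Hence $\mathbb{T}^{i\to}$ and $\mathbb{T}^{\to\bar{j}}$ agree on all edges except $e$; direct inspection puts $e$ in $\mathbb{T}^{\to\bar{j}}\setminus\mathbb{T}^{i\to}$, giving $\mathbb{T}^{i\to}\sqcup\{e\} = \mathbb{T}^{\to\bar{j}}$. The companion identity $\mathbb{T}^{\leftarrow\bar{j}}\sqcup\{e\} = \mathbb{T}^{i\leftarrow}$ will then follow by taking complements in $\mathbb{T}$ via (a).

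For (d), consider the unique path $i = v_0, v_1, \ldots, v_k = i'$ in $\mathbb{T}$; $k$ is even since both endpoints lie in $I$. Rerooting from $i$ to $i'$ reverses parent-child roles precisely along this path and leaves them unchanged elsewhere, so $\mathbb{T}^{i\to}$ and $\mathbb{T}^{i'\to}$ agree off the path. A short parity check on $j$ shows the $k$ path edges split evenly: $k/2$ of them lie in $\mathbb{T}^{i\to}\setminus\mathbb{T}^{i'\to}$ and $k/2$ in $\mathbb{T}^{i'\to}\setminus\mathbb{T}^{i\to}$. So the symmetric difference has size $k$, which equals $2$ (a single-edge swap) iff $k=2$, i.e.\ iff some $\bar{j}\in\bar{J}$ is adjacent to both $i$ and $i'$. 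The pull statement then follows from (a), and the $\bar{J}$-side versions are symmetric. The main obstacle is purely bookkeeping---keeping the four push/pull variants and their dual convention consistent---which is why the paper calls this routine; the only mildly substantive step is the path-parity argument in (d).
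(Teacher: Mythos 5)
The paper states \cref{lem:pushpull} without proof (``It is easy to verify the following properties''), so there is no paper argument to compare against; you are filling a deliberate gap. Your proof is correct and the rooted-tree reformulation is the natural one. In particular: (i) you are right that ``symmetrically'' in the definition must mean swapping the roles of $I$ and $\bar{J}$, so that $\mathbb{T}^{\to\bar{j}}$ and $\mathbb{T}^{\leftarrow\bar{j}}$ retain the edges oriented from $I$ to $\bar{J}$ -- the other reading breaks (b); (ii) reading $\mathbb{T}^{i\to}$ as ``the set of parent edges of vertices of $I$ when $\mathbb{T}$ is rooted at $i$'' makes (a)--(c) immediate (each non-root vertex has exactly one parent edge, rerooting across the single edge $e$ flips only $e$'s parent-child assignment); and (iii) the path-parity computation for (d) is right: the $i$--$i'$ path has even length $k$, rerooting reverses parent-child exactly along that path, the $k$ path edges split $k/2$--$k/2$ into the two set differences, and $k=2$ is precisely the common-neighbor case. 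One small wording point: in (c), when you say the rooted trees ``differ only in the parent-child assignment of $e$, hence $\mathbb{T}^{i\to}$ and $\mathbb{T}^{\to\bar{j}}$ agree on all edges except $e$,'' note that the two operations use opposite retention rules ($\bar{J}\to I$ versus $I\to\bar{J}$) on orientations that are reversed off $e$ -- the conclusion is correct, but it is cleaner to phrase it directly as: $\mathbb{T}^{i\to}$ is the set of parent edges of $I\setminus\{i\}$ rooted at $i$, $\mathbb{T}^{\to\bar{j}}$ is the set of parent edges of $I$ rooted at $\bar{j}$, and these coincide except that the latter also contains $i$'s parent edge $e$.
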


This operation has appeared in many of the previous works, though without an explicit name. We collect some of the results here, rephrased using this definition:

\begin{proposition}[\cite{Fields}, Lemma 10 and \cite{FieldLattice}, Lemma 3.10]\label{prop:ipush}
    Given a linkage covector $\mathbb{T}_{\tau}(v)$ of a linkage tope field $\mathcal{M}$ and $i\in \tau$, we can recover the partial tope for $\tau\setminus\{i\}$ via $M_{\tau\setminus\{i\}} = (\mathbb{T}_{\tau}(v))^{i\rightarrow}$.
\end{proposition}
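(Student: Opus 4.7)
The plan is to prove the equality of the two graphs by showing that both $M_{\tau\setminus\{i\}}$ and $(\mathbb{T}_\tau(v))^{i\rightarrow}$ are acyclic subgraphs of $\mathbb{T}_\tau(v)$ with identical left and right degree vectors, and then invoking \cref{lem:compat}(a) after verifying compatibility.

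First I would note that $M_{\tau\setminus\{i\}} \subseteq \mathbb{T}_\tau(v)$ is immediate from the definition of the linkage covector as the union $\bigcup_{\sigma\subseteq \tau} G(M_\sigma)$. Since $M_{\tau\setminus\{i\}}$ is a partial tope of a tope field of type $v$ indexed by $\tau\setminus\{i\}$, we have $LD(M_{\tau\setminus\{i\}}) = \mathbf{1}_{\tau\setminus\{i\}}$ and $RD(M_{\tau\setminus\{i\}}) = v$ by definition.

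Next I would compute the degree vectors of $(\mathbb{T}_\tau(v))^{i\rightarrow}$. \cref{lem:pushpull}(b) gives $LD((\mathbb{T}_\tau(v))^{i\rightarrow}) = \mathbf{1}_{\tau\setminus\{i\}}$ directly. For the right degree vector, I would combine parts (a) and (b) of the same lemma with the linkage axiom $RD(\mathbb{T}_\tau(v)) = v + \mathbf{1}_{[\bar{d}]}$ to obtain
\[
RD((\mathbb{T}_\tau(v))^{i\rightarrow}) = RD(\mathbb{T}_\tau(v)) - RD((\mathbb{T}_\tau(v))^{i\leftarrow}) = (v + \mathbf{1}_{[\bar{d}]}) - \mathbf{1}_{[\bar{d}]} = v.
\]
Thus both subgraphs match in both degree vectors.

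The final ingredient is compatibility. Both subgraphs are acyclic as subgraphs of the tree $\mathbb{T}_\tau(v)$. I would then record the general observation that any two subgraphs of a tree are automatically compatible: if both contained a perfect matching between the same pair of vertex subsets $I' \subseteq \tau$ and $\bar{J}' \subseteq [\bar{d}]$, the symmetric difference of these two matchings would be an edge-disjoint union of even cycles contained in the tree, contradicting acyclicity. Applying the contrapositive of \cref{lem:compat}(a) to two compatible acyclic subgraphs with identical left and right degree vectors then forces $M_{\tau\setminus\{i\}} = (\mathbb{T}_\tau(v))^{i\rightarrow}$. I do not foresee a substantive obstacle; the only step requiring any care is the brief argument that subgraphs of a tree are pairwise compatible, and the rest is a short degree-vector computation.
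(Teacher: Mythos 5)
Your proof is correct. Note first that the paper itself states \cref{prop:ipush} as a cited result (from \cite{Fields} and \cite{FieldLattice}) without reproducing a proof, so there is no in-paper argument to compare against directly. Your argument is a clean, self-contained derivation using only the paper's own toolkit: the inclusion $M_{\tau\setminus\{i\}}\subseteq\mathbb{T}_\tau(v)$ is immediate, the degree computations via \cref{lem:pushpull}(a)--(b) and the linkage condition $RD(\mathbb{T}_\tau(v))=v+\mathbf{1}_{[\bar{d}]}$ are right, and the observation that any two subgraphs of a common tree are compatible (the symmetric difference of two distinct perfect matchings on the same vertex sets is a nonempty union of even cycles, which cannot sit inside a tree) is exactly what is needed to apply the contrapositive of \cref{lem:compat}(a) and conclude equality. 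This is very much in the spirit of the paper's recurring ``same degree vectors $+$ compatibility $\Rightarrow$ equality'' pattern, and it is arguably cleaner than a direct structural verification that each edge of $M_{\tau\setminus\{i\}}$ points away from $i$ in the tree.
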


\begin{proposition}[\cite{FieldLattice}, Definition 3.12 and Proposition 3.15]
    Given a linkage tope field of $\mathcal{M}$ of type $v$ and thickness $k < n'$ and some $\bar{j}\in [\bar{d}]$, if we take the $\bar{j}$-pull of every linkage covector $\mathbb{T}_{\tau}(v)$ over all $\tau\in \binom{[n']}{k+1}$, the collection of graphs $M_\tau = (\mathbb{T}_{\tau}(v))^{\rightarrow \bar{j}}$ defines a linkage tope field $\mathcal{M}^{+\bar{j}}$ of type $v + e_{\bar{j}}$. This is called the \emph{$\bar{j}$-amalgamation} of $\mathcal{M}$.
\end{proposition}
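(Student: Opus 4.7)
My plan is to verify the two requirements of a linkage tope field in turn. The first requirement—that each $M_\tau = \mathbb{T}_\tau(v)^{\rightarrow\bar{j}}$ is a partial tope with right degree vector $v+e_{\bar{j}}$—is essentially routine given \cref{lem:pushpull}(b), which already guarantees that the $\bar{j}$-pull is a left semi-matching on $\tau$. To check the right degrees, I would root $\mathbb{T}_\tau(v)$ at $\bar{j}$ and count children: the pull's degree at $\bar{j}'$ equals $\deg_{\mathbb{T}_\tau(v)}(\bar{j}')-[\bar{j}'\neq\bar{j}]$, which simplifies to $(v+e_{\bar{j}})_{\bar{j}'}$.

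For the linkage axiom, fix a $(k+2)$-subset $\tau'\subseteq[n']$ and set $W:=\bigcup_{i\in\tau'}M_{\tau'\setminus\{i\}}$. My strategy is to exhibit a spanning tree $\hat{\mathbb{T}}$ on $\tau'\sqcup[\bar{d}]$ with right degree $v+e_{\bar{j}}+\mathbf{1}_{[\bar{d}]}$ whose $i$-pushes exactly recover the partial topes: $\hat{\mathbb{T}}^{i\rightarrow}=M_{\tau'\setminus\{i\}}$ for every $i\in\tau'$. Given such $\hat{\mathbb{T}}$, one concludes $W=\hat{\mathbb{T}}$ by a short edge-tracing argument: for any edge $e=(i_a,\bar{j}_a)$ of $\hat{\mathbb{T}}$, the component of $\hat{\mathbb{T}}\setminus e$ containing $\bar{j}_a$ holds at least one further edge at $\bar{j}_a$ (since every entry of $v+e_{\bar{j}}+\mathbf{1}$ is at least $2$), hence contains some $i\in\tau'$, which forces $e\in\hat{\mathbb{T}}^{i\rightarrow}$.

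The natural candidate is $\hat{\mathbb{T}}:=\mathbb{T}_{\tau'\setminus\{i_0\}}(v)\cup\{(i_0,\bar{j})\}$ for a well-chosen $i_0\in\tau'$, obtained by grafting $i_0$ as a pendant at $\bar{j}$. For $i=i_0$, the pendant structure makes the $i_0$-push of $\hat{\mathbb{T}}$ agree with the $\bar{j}$-pull of $\mathbb{T}_{\tau'\setminus\{i_0\}}(v)$, which is $M_{\tau'\setminus\{i_0\}}$ by definition. For $i\neq i_0$, combining \cref{prop:ipush} with \cref{lem:pushpull}(c) shows that both $\hat{\mathbb{T}}^{i\rightarrow}$ and $M_{\tau'\setminus\{i\}}$ reduce to $M^{(\mathcal{M})}_{\tau'\setminus\{i_0,i\}}\cup\{(i_0,\bar{j})\}$, provided that $(i_0,\bar{j})$ is an edge of $\mathbb{T}_{\tau'\setminus\{i\}}(v)$.

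The main obstacle is therefore the existence of a single $i_0\in\tau'$ with $(i_0,\bar{j})\in\mathbb{T}_{\tau'\setminus\{i\}}(v)$ for \emph{every} $i\in\tau'\setminus\{i_0\}$. This does not follow from naive averaging: each of the $k+2$ linkage trees contributes only $v_{\bar{j}}+1$ candidates, so the average number of trees in which a given element appears as a $\bar{j}$-neighbor is $v_{\bar{j}}+1$, whereas I need one element to cover $k+1$ trees (and typically $v_{\bar{j}}<k$). To break this barrier I would translate the condition into the language of the underlying partial topes of $\mathcal{M}$—namely, $(i_0,\bar{j})\in\mathbb{T}_{\tau'\setminus\{i\}}(v)$ iff some $\sigma\subseteq\tau'\setminus\{i\}$ with $i_0\in\sigma$ satisfies $M^{(\mathcal{M})}_\sigma(i_0)=\bar{j}$—and observe that any such $\sigma$ simultaneously certifies the containment for \emph{both} elements of $\tau'\setminus\sigma$. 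An extremal choice of $i_0$ maximizing $|\{\sigma\ni i_0:M^{(\mathcal{M})}_\sigma(i_0)=\bar{j}\}|$, combined with the tree structure of the various $\mathbb{T}_{\tau'\setminus\{i\}}(v)$'s to prevent pathological overlap, should furnish the required $i_0$ and complete the proof.
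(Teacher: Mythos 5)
Your outline is clean up to and including the edge-tracing deduction that $W=\hat{\mathbb{T}}$ once one has a spanning tree $\hat{\mathbb{T}}$ with right degree $v+e_{\bar{j}}+\mathbf{1}$ and $\hat{\mathbb{T}}^{i\rightarrow}=M_{\tau'\setminus\{i\}}$ for all $i$. The first-part degree computation for the $\bar{j}$-pull is also routine and correct. The verification that, \emph{given} a suitable $i_0$, the grafted tree $\hat{\mathbb{T}}=\mathbb{T}_{\tau'\setminus\{i_0\}}(v)\cup\{(i_0,\bar{j})\}$ has all the right $i$-pushes, using \cref{prop:ipush} and \cref{lem:pushpull}(c), is correct.

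The genuine gap, which you flag yourself, is the existence of such an $i_0$. Your sketch (extremal choice plus the observation that each $\sigma$ certifies membership for two trees) is a plan, not an argument, and it is not at all clear it can be completed. Worse, I believe the specific tree you propose may simply be the wrong shape. Your $\hat{\mathbb{T}}$ by construction has $i_0$ as a leaf hanging off $\bar{j}$, and since the edge-tracing step would then give $W=\hat{\mathbb{T}}$, the existence of $i_0$ is \emph{equivalent} to the (to-be-proven) linkage covector having a degree-$1$ vertex of $\tau'$ adjacent to $\bar{j}$. But this pendant structure is not forced by the degree constraints: $W$ must have $k+d+1$ edges spread over the $k+2$ left vertices, with $v_{\bar{j}}+2$ of them adjacent to $\bar{j}$; requiring each $\bar{j}$-neighbor to have degree $\geq 2$ and the rest degree $\geq 1$ only yields a contradiction when $v_{\bar{j}}\geq d-2$. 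So already for a plain matching field ($v=\mathbf{1}_{[\bar{d}]}$) with $d\geq 4$, a pendant at $\bar{j}$ is not guaranteed by counting, and if the amalgamated linkage covector lacks one, no choice of $i_0$ can make your construction work. To salvage the approach you would need either a structural (not merely numerical) argument that a pendant at $\bar{j}$ always exists, or a more flexible candidate tree; as written, the proof is incomplete.
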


\begin{corollary}[Iterated amalgamation, \cite{FieldLattice}, Theorem 3.16]
    For each lattice point $v\in n'\Delta^{d-1}\cap (\mathbb{Z}^+)^d$, by performing $\bar{j}$-amalgamation $v_{\bar{j}}-1$ times for each $\bar{j}\in [\bar{d}]$ (in any order) starting from a linkage matching field $\mathcal{M}$, one obtains the unique tope with right degree $v$ that is compatible with all matchings in $\mathcal{M}$.
\end{corollary}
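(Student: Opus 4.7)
The plan is to verify three things: (a) the iterated procedure terminates in a linkage tope field of thickness $n'$---necessarily a single tope $T_v$ with $RD(T_v) = v$; (b) this $T_v$ is compatible with every $M_\rho \in \mathcal{M}$; and (c) $T_v$ is the only tope of right degree $v$ with property (b). Together these also imply order-independence, since the order-dependent procedure always produces a tope satisfying the unique characterization.

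Fact (a) is bookkeeping via the preceding proposition: each $\bar{j}$-amalgamation raises the thickness by $1$ and shifts the type by $e_{\bar{j}}$, so performing $v_{\bar{j}}-1$ amalgamations of each type $\bar{j}$ takes us from thickness $d$, type $\mathbf{1}_{[\bar{d}]}$ to thickness $n'$, type $v$. A linkage tope field of thickness $n'$ has only the subset $\tau = [n']$, and thus consists of a single full tope.

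For uniqueness (c), suppose $T \neq T'$ are two topes of right degree $v$ both compatible with every $M_\rho$. Form the directed graph $D$ on $[\bar{d}]$ with edges $T(i) \to T'(i)$ for each $i$ where $T(i) \neq T'(i)$. Because $RD(T) = RD(T')$, every vertex of $D$ has equal in- and out-degree; since $T \neq T'$, $D$ is nonempty, so it contains a directed cycle $\bar{j}_1 \to \cdots \to \bar{j}_m \to \bar{j}_1$ with corresponding distinct indices $i_1, \ldots, i_m$ satisfying $T(i_k) = \bar{j}_k$ and $T'(i_k) = \bar{j}_{k+1 \bmod m}$. Since $v \geq \mathbf{1}_{[\bar{d}]}$, I can extend $\{i_1, \ldots, i_m\}$ to a $d$-subset $\sigma \subseteq [n']$ on which $T|_\sigma$ is a bijection onto $[\bar{d}]$; compatibility of $T$ with $M_\sigma$ forces $M_\sigma = T|_\sigma$. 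But then $M_\sigma$ and $T'$ both contain perfect matchings between $I^* = \{i_1, \ldots, i_m\}$ and $\bar{J}^* = \{\bar{j}_1, \ldots, \bar{j}_m\}$---namely $i_k \mapsto \bar{j}_k$ for $M_\sigma$ and $i_k \mapsto \bar{j}_{k+1 \bmod m}$ for $T'$---which are distinct, contradicting compatibility of $T'$ with $M_\sigma$.

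For compatibility (b), I would induct on the amalgamation step with a strengthened hypothesis: every linkage covector $\mathbb{T}'_\tau$ of the current tope field is compatible with every $M_\rho \in \mathcal{M}$. Since $M''_\tau \subseteq \mathbb{T}'_\tau$ by construction, the partial topes of the amalgamated field (in particular the final $T_v$) then inherit compatibility. For the base case, pairwise compatibility of the matchings in $\mathcal{M}$ itself follows by iterating the exchange axiom \cref{thm:linkageequiv}(4)---each swap only modifies the edge at an element of $\sigma \setminus \sigma'$, hence preserves the matching on $\sigma \cap \sigma'$. Given this, any hypothetical alternating cycle in $U(\mathbb{T}_\tau, M_\rho)$ with left-vertex support $\{i_1, \ldots, i_m\} \subseteq \tau \cap \rho$ can be collapsed into a single $U(M_\sigma, M_\rho)$ cycle by choosing any $d$-subset $\sigma \subseteq \tau$ containing all of $i_1, \ldots, i_m$, contradicting pairwise compatibility. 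The inductive step runs analogously on the new partial topes and covectors. The main obstacle I anticipate is this collapsing step in the inductive case: isolating which component $M'_{\sigma'}$ contributes each cycle edge requires careful combinatorial bookkeeping, and the cleanest presentation may need to track the invariant for both partial topes and covectors jointly rather than in sequence.
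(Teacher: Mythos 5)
The paper does not prove this corollary itself --- it is cited verbatim from \cite{FieldLattice}, Theorem 3.16, as part of the survey in Section 3.1 --- so there is no in-paper argument to compare against. Judged on its own terms, your decomposition into (a) bookkeeping, (b) compatibility, and (c) uniqueness (with order-independence following formally) is a sensible strategy, and parts (a) and (c) are correct: the directed multigraph argument in (c) is clean, and the extension of $\{i_1,\dots,i_m\}$ to a $d$-set $\sigma$ on which $T$ restricts to a bijection genuinely uses $v\in(\mathbb{Z}^+)^d$.

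However, part (b) has a real gap. The base-case claim --- that iterating the exchange axiom shows $M_\sigma$ and $M_{\sigma'}$ agree on $\sigma\cap\sigma'$ --- is false. Consider the $(3,2)$-linkage matching field $M_{\{1,2\}}=\{(1,\bar 1),(2,\bar 2)\}$, $M_{\{1,3\}}=\{(1,\bar 2),(3,\bar 1)\}$, $M_{\{2,3\}}=\{(2,\bar 2),(3,\bar 1)\}$; the linkage covector is the path $3\text{--}\bar 1\text{--}1\text{--}\bar 2\text{--}2$, so strong linkage holds, yet $M_{\{1,2\}}(1)=\bar 1\neq\bar 2=M_{\{1,3\}}(1)$. (In fact, in this example the exchange axiom \emph{as stated in \cref{thm:linkageequiv}(4)} fails outright: the forced choice $i=2,i'=3$ has $M_{\{1,2\}}(2)=\bar 2\neq\bar 1=M_{\{1,3\}}(3)$. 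The version in the original source allows the exchanged index $i$ to lie in $\sigma\cap\sigma'$, in which case the swap \emph{does} change an edge of $\sigma\cap\sigma'$ and your preservation claim does not follow.) What you actually need here --- pairwise compatibility of the matchings of a linkage matching field --- is true and should simply be cited, as the paper does elsewhere (\cite{FieldLattice}, Proposition 3.5). More seriously, you acknowledge that the inductive step is not written out, and the sticking point is genuine: a new linkage covector $\mathbb{T}''_{\tau'}$ of the amalgamated field is the \emph{union} of partial topes $M''_\tau$, and compatibility of each $M''_\tau$ with $M_\rho$ does not transfer to the union without a separate argument that a nontrivial cycle in $U(\mathbb{T}''_{\tau'},M_\rho)$ can be collapsed into one living in some $U(\mathbb{T}'_\tau,M_\rho)$. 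Until that collapsing step is made precise, part (b) --- and hence the corollary --- is not established.
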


\begin{remark}
    Note that from \cref{lem:pushpull}(c), it is easy to see that for a partial tope $M_\tau$ in $\mathcal{M}^{+\bar{j}}$, removing any edge of the form $(i, \bar{j})$ from it gives a partial tope $M_{\tau\setminus\{i\}}$ that is in $\mathcal{M}$. In fact, the relationship between these two partial topes are essentially the same as the pair of type-(2) and type-(1) partial matchings $M_2$ and $M_1$ that appear in the proof of \cref{thm:fieldstack}. 
    
    Therefore, the set of all partial topes that arise from iteratively amalgamating a linkage matching field are closed under taking subgraphs, similar to matching ensembles and generic tropical oriented matroids. The compatibility between these topes follows from the fact that the right semi-matchings of a linkage matching field are pairwise compatible (see \cite{FieldLattice}, Proposition 3.5). The same applies to matching left-semi-ensembles.

    We also note that a similar (and ultimately equivalent) construction of an extended tope arrangement from a matching ensemble when $n\leq d$ is presented in the proof of Proposition 5.3 of \cite{Ensembles}.
\end{remark}

\begin{proposition}[\cite{Trianguloids}, Proposition 3.4]\label{prop:treeunion}
    In a pre-trianguloid, the union of all topes $T_{u + e_{\bar{j}}}$ whose positions are the vertices of a unit simplex $u + \Delta^{d-1}$ for some $u\in (n-1)\Delta^{d-1}$ is a tree $\mathbb{T}(u)$ whose right degree vector is $u + \mathbf{1}_{[\bar{d}]}$. In particular, we have $T_{u + e_{\bar{j}}} = (\mathbb{T}(u))^{\rightarrow \bar{j}}$.
\end{proposition}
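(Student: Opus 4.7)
The plan is to pin down the edges of $\mathbb{T}(u)$ using the combinatorial sector axiom, then prove $\mathbb{T}(u)$ is connected (hence a tree), and finally deduce the $\bar{j}$-pull identification via a uniqueness argument.

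First I would set $S_{\bar{j}} := T_{u+e_{\bar{j}}}^{-1}(\bar{j})$, so that $|S_{\bar{j}}| = u_{\bar{j}}+1$. Applying the combinatorial sector axiom to the pair $(u+e_{\bar{j}}, u+e_{\bar{k}})$ for each $\bar{k}\neq\bar{j}$ (which satisfies $(u+e_{\bar{j}}) - e_{\bar{j}} = u = (u+e_{\bar{k}}) - e_{\bar{k}}$) gives $T_{u+e_{\bar{k}}}^{-1}(\bar{j}) \subset S_{\bar{j}}$. Consequently the edges of $\mathbb{T}(u)$ incident to $\bar{j}$ are exactly $\{(i,\bar{j}) : i \in S_{\bar{j}}\}$, which yields $RD(\mathbb{T}(u)) = u + \mathbf{1}_{[\bar{d}]}$ and $|E(\mathbb{T}(u))| = n+d-1$. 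The same inclusion also shows that $T_{u+e_{\bar{j}}}(i) = \bar{k}$ forces $i \in S_{\bar{k}}$, so every $i \in [n]$ is adjacent to at least one vertex in $\mathbb{T}(u)$.

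The heart of the proof is showing $\mathbb{T}(u)$ is connected. Suppose for contradiction there is a non-trivial partition $[n] \sqcup [\bar{d}] = A \sqcup B$ with $A = I_A \sqcup \bar{J}_A$ and $B = I_B \sqcup \bar{J}_B$ across which no edge of $\mathbb{T}(u)$ runs. Then $S_{\bar{l}} \subseteq I_A$ for $\bar{l} \in \bar{J}_A$ and $S_{\bar{l}} \subseteq I_B$ for $\bar{l} \in \bar{J}_B$; combined with the combinatorial sector inclusions above, every tope $T_{u+e_{\bar{j}}}$ must send $I_A$ into $\bar{J}_A$ and $I_B$ into $\bar{J}_B$. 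The observation that each $i$ lies in some $S_{\bar{k}}$ rules out $\bar{J}_A$ or $\bar{J}_B$ being empty, so both are non-empty. Picking $\bar{j}_A \in \bar{J}_A$ and $\bar{j}_B \in \bar{J}_B$ and computing $|I_A|$ by summing the right-degree vector over $\bar{J}_A$ yields $|I_A| = \sum_{\bar{l}\in\bar{J}_A} u_{\bar{l}} + 1$ from $T_{u+e_{\bar{j}_A}}$ but $|I_A| = \sum_{\bar{l}\in\bar{J}_A} u_{\bar{l}}$ from $T_{u+e_{\bar{j}_B}}$, a contradiction. Hence $\mathbb{T}(u)$ is a spanning connected subgraph with $n+d-1$ edges on $n+d$ vertices, i.e., a tree.

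For the ``in particular'' claim, I would establish a small uniqueness lemma: in any spanning tree $\mathbb{T}$ of $K_{n,d}$ with $RD(\mathbb{T}) = u + \mathbf{1}_{[\bar{d}]}$, there is at most one left semi-matching $M \subseteq \mathbb{T}$ with $RD(M) = u + e_{\bar{j}}$ — if $M, M'$ were two such, every vertex would have even degree in the symmetric difference $M \triangle M'$, so it would contain a cycle inside the acyclic graph $\mathbb{T}$ unless empty. By \cref{lem:pushpull}(a)(b), the pull $(\mathbb{T}(u))^{\rightarrow\bar{j}}$ is a left semi-matching with right degree $u + \mathbf{1}_{[\bar{d}]} - \mathbf{1}_{[\bar{d}]\setminus\{\bar{j}\}} = u + e_{\bar{j}}$, and so is $T_{u+e_{\bar{j}}} \subseteq \mathbb{T}(u)$ by construction; uniqueness then forces $T_{u+e_{\bar{j}}} = (\mathbb{T}(u))^{\rightarrow\bar{j}}$. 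The main obstacle is the connectivity step — finding the right invariant ($|I_A|$, evaluated via topes on each side of the partition) takes some thought — whereas the degree count and uniqueness are routine.
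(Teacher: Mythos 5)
Your proof is correct and self-contained. The paper states this proposition as a citation to \cite{Trianguloids}, Proposition~3.4, and does not reproduce a proof, so there is no in-text argument to compare against; judged on its own terms, every step of your argument checks out.

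A few remarks on the approach. You first use the combinatorial sector axiom symmetrically (for each ordered pair $\bar{j},\bar{k}$ with $u = (u+e_{\bar{j}}) - e_{\bar{j}} = (u+e_{\bar{k}}) - e_{\bar{k}}$) to pin down the neighborhood of each $\bar{j}$ in $\mathbb{T}(u)$ as exactly $S_{\bar{j}} = T_{u+e_{\bar{j}}}^{-1}(\bar{j})$, which immediately gives both $RD(\mathbb{T}(u)) = u + \mathbf{1}_{[\bar{d}]}$ and the edge count $n+d-1$. The connectivity argument is the clever part: assuming a separating partition, the sector inclusions force each tope $T_{u+e_{\bar{j}}}$ to respect the partition, and then counting $|I_A|$ using a tope whose position perturbs $u$ inside $\bar{J}_A$ versus one perturbing inside $\bar{J}_B$ gives an off-by-one contradiction. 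Together with the edge count this forces a tree. The ``in particular'' step is correctly reduced to the observation that a spanning tree of $K_{n,d}$ contains at most one left semi-matching with any prescribed right degree vector (a symmetric-difference/acyclicity argument), combined with \cref{lem:pushpull}(a),(b). The only thing one should be careful to spell out is that in the counting step, $T_{u+e_{\bar{j}}}^{-1}(\bar{l}) \subseteq I_A$ for $\bar{l}\in \bar{J}_A$ follows because the tope sends $I_B$ into $\bar{J}_B$ and is a total function on $[n]$ --- you assert this but do not quite say it; it is nonetheless implied by ``sends $I_A$ into $\bar{J}_A$ and $I_B$ into $\bar{J}_B$.'' This is a minor exposition point, not a gap. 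Note also that the original reference \cite{Trianguloids} works in a somewhat different encoding (colored multi-graphs on lattice points rather than bipartite subgraphs of $K_{n,d}$), so your proof is likely a genuinely different, and arguably more elementary, route than the cited one.
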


\begin{remark}
    If we replace the pre-trianguloid in this proposition with an extended tope arrangement, there is a simple reason for why this proposition is true: $\mathbb{T}(u)$ is simply the linkage covector $\mathbb{T}_{[n]}(u)$ of the linkage tope field $\mathcal{M}$ of type $u$. We will show in the next sub-section that pre-trianguloids and extended tope arrangements are in fact cryptomorphic.
\end{remark}

\begin{remark}
    Note that this ``$\Delta$-linkage'' property also implies the combinatorial sector axiom: The set of neighbors of $\bar{j}$ in $T_{u + e_{\bar{j}}} = (\mathbb{T}(u))^{\rightarrow \bar{j}}$ is clearly a superset of that in $T_{u + e_{\bar{j}'}} = (\mathbb{T}(u))^{\rightarrow \bar{j}'}$ for any $\bar{j}'\neq \bar{j}$. Therefore, these two properties are equivalent.
\end{remark}

When the pre-trianguloid is a trianguloid (i.e. corresponding to a fine mixed subdivision), it is not difficult to see that these trees $\mathbb{T}(u)$ are exactly the trees encoding the fine Minkowski cells.

We mention one more consequence of the combinatorial sector axiom:

\begin{proposition}\label{prop:largetreeunion}
    Let $m$ be a positive integer, and $P = s + (m-1)\Delta^{d-1}$ where $s$ is a lattice point in $(n-m)\Delta^{d-1}$. In a pre-trianguloid, $\bigcup_{u\in P\cap \mathbb{Z}^{d}} \mathbb{T}(u)$ is a (connected) graph whose right degree vector is $s + m\cdot \mathbf{1}_{[\bar{d}]}$.
\end{proposition}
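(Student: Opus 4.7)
The plan is to split the claim into showing connectedness (which is essentially automatic) and computing the right-degree vector (which is the main content and uses the combinatorial sector axiom from \cref{def:trianguloid}).

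For connectedness, each tree $\mathbb{T}(u)$ from \cref{prop:treeunion} has right-degree vector summing to $|u|+d = (n-1)+d = n+d-1$, and since it is a tree on a subset of $[n]\sqcup[\bar{d}]$ with that many edges, it must actually span the whole vertex set $[n]\sqcup[\bar{d}]$. A union of connected spanning subgraphs on a common vertex set is connected, so $\bigcup_{u\in P\cap \mathbb{Z}^d}\mathbb{T}(u)$ is connected.

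For the right-degree vector, I would fix $\bar{j}\in[\bar{d}]$ and first record the preliminary observation that the set of neighbors of $\bar{j}$ in $\mathbb{T}(u)$ is exactly $T_{u+e_{\bar{j}}}^{-1}(\bar{j})$: the inclusion $\supseteq$ is immediate from $T_{u+e_{\bar{j}}}\subseteq \mathbb{T}(u)$, and equality follows by counting, since both sides have cardinality $u_{\bar{j}}+1$. Therefore the right degree at $\bar{j}$ of the union equals $\bigl|\bigcup_{u\in P\cap \mathbb{Z}^d}T_{u+e_{\bar{j}}}^{-1}(\bar{j})\bigr|$, and it suffices to show that this union is exactly $T_{s+me_{\bar{j}}}^{-1}(\bar{j})$, which has cardinality $s_{\bar{j}}+m$ (the $\bar{j}$-th entry of $s+me_{\bar{j}}\in n\Delta^{d-1}$). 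The inclusion $\supseteq$ is witnessed by $u=s+(m-1)e_{\bar{j}}\in P$, for which $u+e_{\bar{j}}=s+me_{\bar{j}}$. For the opposite inclusion, given any $u=s+v'\in P$ with $v'\in(m-1)\Delta^{d-1}\cap\mathbb{Z}^d$, I would construct a lattice path from $u+e_{\bar{j}}$ to $s+me_{\bar{j}}$ that at each step transfers one unit of mass from some coordinate $\bar{j}'\neq\bar{j}$ (with currently positive value) to the coordinate $\bar{j}$. Each such step is an instance of the combinatorial sector axiom relation $v-e_{\bar{j}}=v''-e_{\bar{j}'}$, which yields $T_{v''}^{-1}(\bar{j})\subseteq T_v^{-1}(\bar{j})$; chaining the inclusions gives $T_{u+e_{\bar{j}}}^{-1}(\bar{j})\subseteq T_{s+me_{\bar{j}}}^{-1}(\bar{j})$.

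The main (mild) obstacle is purely bookkeeping: one must verify that every intermediate lattice point along the transfer path stays in $n\Delta^{d-1}$ (which holds since the coordinate sum is preserved and no coordinate is ever decreased below zero) and must apply the combinatorial sector axiom in the right direction, namely that transferring mass \emph{towards} $\bar{j}$ \emph{enlarges} $T^{-1}(\bar{j})$ rather than shrinks it. Once these are set up, the proof is a one-line chain of inclusions.
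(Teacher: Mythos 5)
Your proof is correct and follows essentially the same approach as the paper: both identify the right degree at $\bar{j}$ with $|T_{s+me_{\bar{j}}}^{-1}(\bar{j})|$ by chaining the combinatorial sector axiom along lattice paths toward $s+me_{\bar{j}}$. You spell out the chaining and the connectedness more explicitly, and you add the clean observation that the $\bar{j}$-neighbors of $\mathbb{T}(u)$ coincide with those of the single tope $T_{u+e_{\bar{j}}}$, but the underlying strategy is the same.
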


\begin{proof}
    We first show that $\bigcup_{u\in P\cap \mathbb{Z}^{d}} \mathbb{T}(u) = \bigcup_{\bar{j}\in[\bar{d}]} T_{s + me_{\bar{j}}}$. From \cref{prop:treeunion}, each tree is a union of topes of the form $T_{u + e_{\bar{j}}}$ where $u+e_{\bar{j}}\in s + m\Delta^{d-1}$. For each $\bar{j}\in[\bar{d}]$, the set of neighbors of $\bar{j}$ in each of these topes is a subset of that in $T_{s + me_{\bar{j}}}$ due to the combinatorial sector axiom. Therefore, the union of the $d$ topes of the form $T_{s + me_{\bar{j}}}$ exactly determines all possible neighbors of each vertex of $[\bar{d}]$ and hence the union of all topes/trees in question. The proposition now follows from the fact that $|T^{-1}_{s+me_{\bar{j}}}(\bar{j})| = s_{\bar{j}} + m$ for each $\bar{j}\in [\bar{d}]$.
\end{proof}

This property also implies the following well-known \emph{Spread-out simplices condition}:

\begin{corollary}[\cite{FlagArr}, Proposition 8.2(b)]
    Let $m$ be a positive integer, and $\tilde{P} = s + m\Delta^{d-1}$ where $s$ is a lattice point in $(n-m)\Delta^{d-1}$. In a fine mixed subdivision of $n\Delta^{d-1}$, there are at most $m$ cells inside $\tilde{P}$ that are translations of the unit simplex $\Delta^{d-1}$. (These cells are also called \emph{unmixed cells}.)
\end{corollary}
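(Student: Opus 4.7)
The plan is to use \cref{prop:largetreeunion} with the given $s$ and $m$, and control the number of unmixed cells by counting left-degree contributions to the union $U = \bigcup_{u\in P\cap\mathbb{Z}^d}\mathbb{T}(u)$, where $P = s+(m-1)\Delta^{d-1}$. First I would translate the geometric statement into the graph language: a fine Minkowski cell $C = \sum_i C_i$ is a translate of $\Delta^{d-1}$ exactly when one summand $C_i$ equals $\Delta^{d-1}$ and every other $C_{i'}$ is a single vertex, i.e. when $\mathbb{T}(u) = G(C)$ is a \emph{star} centered at some $i\in[n]$, with $i$ joined to every vertex in $[\bar{d}]$ and every other $i'\in [n]$ of degree one. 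Since $C = u + \Delta^{d-1}$ for $u = RD^-(G(C))$, the condition $C\subseteq \tilde{P}$ becomes $u\in P\cap\mathbb{Z}^d$, so the unmixed cells of $\tilde{P}$ correspond precisely to those $u\in P\cap\mathbb{Z}^d$ for which $\mathbb{T}(u)$ is a star.

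The main step, and the one I expect to be the crux, is to rule out two distinct unmixed cells sharing a star center. Suppose $u_1\neq u_2$ are both positions of unmixed cells with the same star center $i$. Comparing the induced bijections $[n]\setminus\{i\}\to[\bar{d}]$ on the non-center vertices, there must exist $i'\in[n]\setminus\{i\}$ joined in $\mathbb{T}(u_1)$ to some $\bar{j}_1$ and in $\mathbb{T}(u_2)$ to some $\bar{j}_2\neq \bar{j}_1$. Because $i$ is joined to every vertex of $[\bar{d}]$ in both trees, each of $\mathbb{T}(u_1)$ and $\mathbb{T}(u_2)$ contains a perfect matching between $I=\{i, i'\}$ and $\bar{J}=\{\bar{j}_1, \bar{j}_2\}$, but the two matchings disagree on $i'$. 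This contradicts the compatibility axiom. Hence the star centers of the unmixed cells in $\tilde{P}$ form a set $I^*\subseteq [n]$ whose size $k$ equals the number of unmixed cells in $\tilde{P}$.

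Finally, I would bound $k$ by counting edges. By \cref{prop:largetreeunion}, $RD(U) = s + m\cdot\mathbf{1}_{[\bar{d}]}$, so $U$ has exactly $(n-m) + md = n + m(d-1)$ edges. Every $i\in[n]$ satisfies $LD(U)_i\geq 1$ since each $\mathbb{T}(u)$ is a spanning tree, and every star center $i\in I^*$ is already joined to all of $[\bar{d}]$ within a single $\mathbb{T}(u)$, forcing $LD(U)_i = d$. Summing left degrees,
\[
n + m(d-1) = \sum_{i\in[n]} LD(U)_i \;\geq\; kd + (n-k) = n + k(d-1),
\]
which yields $k\leq m$, as desired.
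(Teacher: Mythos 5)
Your proof is correct and follows essentially the same approach as the paper's: both hinge on \cref{prop:largetreeunion} and a left-versus-right degree count on the union of trees, with the distinctness of star centers (which you derive directly from compatibility, while the paper appeals to the uniqueness of left degree vectors) ensuring the $kd$ contribution. One small omission: your final inequality $m(d-1)\geq k(d-1)$ is vacuous when $d=1$, so that degenerate case should be noted separately as the paper does, though it is trivial ($n\Delta^0$ is a point).
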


\begin{proof}
    When $d = 1$ this statement is trivial, so assume $d \geq 2$. It is clear that the unmixed cells' left degree vectors are (distinct) permutations of $d, 1, \dots, 1$. If there are at least $m+1$ unmixed cells in $\tilde{P}$, then the union of these trees has total left degree at least $(m+1)d+(n-m-1)$ (since there are at least $m+1$ vertices in $[n]$ with degree $d$ and the others have degree at least 1). However, \cref{prop:largetreeunion} implies that the total right degree of the union of these trees is at most $(n-m)+md$, which contradicts the fact that the total left degree and total right degree of a bipartite graph are equal.
\end{proof}

\subsection{Deletion and contraction}

Similar to matroids, we can construct sub-objects from fine mixed subdivisions, generic tropical oriented matroids, matching ensembles, extended tope arrangements, and (pre-)trianguloids. Here we provide one unifying definition that apply to all of the aforementioned objects.

\begin{definition}
    Let $\mathcal{G}$ be a collection of (distinct) subgraphs of $K_{n, d}$, optionally with a requirement that all vertices in $[n]$ and/or $[\bar{d}]$ have degree at least 1. For $i\in [n]$ and $\bar{j}\in \bar{d}$, the \emph{$i$-deletion} (resp. \emph{$\bar{j}$-contraction}) of $\mathcal{G}$, denoted $\mathcal{G}_{\setminus i}$ (resp. $\mathcal{G}_{/\bar{j}}$), is obtained via removing vertex $i$ (resp. $\bar{j}$) from each subgraph and then removing all duplicate graphs and graphs that no longer satisfy the minimum degree requirements for $\mathcal{G}$.
    
    For subsets $I\subseteq [n]$ and $\bar{J}\subseteq [\bar{d}]$, the \emph{$(I, \bar{J})$-minor} of $\mathcal{G}$, denoted $\mathcal{G}|_{I, \bar{J}}$, is obtained by deleting and contracting all elements in $[n]$ and $[\bar{d}]$ that are not in $I$ or $\bar{J}$, respectively. (It is clear that the deletions and contractions can be done in any order.)
\end{definition}

\begin{remark}
    In the language of fine mixed subdivisions of $n\Delta^{d-1}$, the $i$-deletion is the same as shrinking the $i$-th unit simplex to a single point and the $\bar{j}$-contraction is the same as taking the (induced) fine mixed subdivision of the facet opposite the vertex $n\cdot e_{\bar{j}}$ of $n\Delta^{d-1}$.
\end{remark}

\begin{proposition} All of the following objects are closed under taking minors: 
    \begin{enumerate}
        \item[(a)] \textnormal{(\cite{TropicalOM}, Proposition 4.7 and 4.8)} (Extended) (generic) tropical oriented matroids, and hence also fine mixed subdivisions and trianguloids;
        \item[(b)] Matching ensembles;
        \item[(c)] Extended tope arrangements;
        \item[(d)] Pre-trianguloids.
    \end{enumerate}
\end{proposition}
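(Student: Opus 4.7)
The plan is to address the four items separately, with (a) reducing to prior work and (b)--(d) requiring direct axiomatic verification. A common thread is that all four types of object are recorded as collections of bipartite graphs in $K_{n,d}$, the minor operations simply restrict these to the appropriate sub-vertex-sets, and the defining axioms are mostly local to the index sets involved. Specifically for (a), the result for (extended) generic tropical oriented matroids is exactly the cited \cite{TropicalOM}, Propositions 4.7 and 4.8, and the statements for fine mixed subdivisions and trianguloids follow via the bijections of \cref{thm:TOMtreetope} and \cref{thm:trianguloids}.

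For (b), I would first show that the $(I_0,\bar{J}_0)$-minor of a matching ensemble $\mathcal{M}$ is exactly $\{M_{I,\bar{J}}\;:\;I\subseteq I_0,\;\bar{J}\subseteq\bar{J}_0\}$. The only way the minor could produce a new matching is if deleting a vertex $i\in I$ from $M_{I,\bar{J}}$ yielded something new; but closure guarantees that the result is already present as $M_{I\setminus\{i\},\,\bar{J}\setminus\{M_{I,\bar{J}}(i)\}}$. With this description in hand, the closure and left/right linkage axioms all restrict trivially, since each one only references matchings indexed by subsets of the given $(I,\bar{J})$.

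For (c), the $\bar{j}$-contraction of an extended tope arrangement keeps exactly those topes $T_v$ with $v_{\bar{j}}=0$: every other $T_v$ contains a vertex of $T_v^{-1}(\bar{j})$ whose left-degree drops to $0$ when $\bar{j}$ is removed, so it is discarded. For the $i$-deletion, the tree $\mathbb{T}(u)$ of \cref{prop:treeunion} is the key tool: for each $u\in (n-1)\Delta^{d-1}$, I would show that $T_{u+e_{\bar{j}}}|_{[n]\setminus\{i\}}$ is the \emph{same} partial tope for every $\bar{j}$ that is a neighbor of $i$ in $\mathbb{T}(u)$ (because the $\bar{j}$-pulls of $\mathbb{T}(u)$ only differ at vertex $i$ among such choices), and this common value is the new tope $T_u^{\mathrm{new}}$ at position $u$. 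Every other $T_v\setminus\{i\}$ lands at a different position and is absorbed there after deduplication; correct right degree, pairwise compatibility, and existence of a unique tope at each position of $(n-1)\Delta^{d-1}$ all follow.

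The $\bar{j}$-contraction in (d) is identical, and the $i$-deletion produces the same new topes $T_u^{\mathrm{new}}$ as in (c); a tree-path analysis in fact yields the clean formula $T_u^{\mathrm{new}}=\mathbb{T}(u)^{i\rightarrow}$. The main obstacle --- and the genuine content of (d) --- is verifying the combinatorial sector axiom in the new arrangement, since general pre-trianguloids need not be extended tope arrangements (so (c) does not imply (d)). I would take one of two routes: either (i) verify combinatorial sector directly by chaining the original axiom through carefully chosen intermediate positions for any pair $u-e_{\bar{j}}=u'-e_{\bar{j}'}$ in $(n-1)\Delta^{d-1}$, with case analysis based on whether $\bar{j}'$ (resp.\ $\bar{j}$) is a neighbor of $i$ in $\mathbb{T}(u)$ (resp.\ $\mathbb{T}(u')$); or (ii) pass through the $\Delta$-linkage reformulation in the remark after \cref{prop:treeunion}, by setting $\mathbb{T}^{\mathrm{new}}(u')=\bigcup_{\bar{j}}\mathbb{T}(u'+e_{\bar{j}})^{i\rightarrow}$ for $u'\in(n-2)\Delta^{d-1}$ and showing this is a tree of right degree $u'+\mathbf{1}_{[\bar{d}]}$ satisfying $T_{u'+e_{\bar{j}}}^{\mathrm{new}}=(\mathbb{T}^{\mathrm{new}}(u'))^{\rightarrow\bar{j}}$. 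Route (ii) is conceptually cleaner but hinges on a nontrivial global argument (likely using \cref{prop:largetreeunion} to track edge counts) to upgrade connectivity to a tree of the correct size.
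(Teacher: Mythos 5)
Your proposal follows the same overall route as the paper, and parts (a) and (b) match essentially verbatim. For (c) and (d) the high-level plan is also the same, but two execution details are worth flagging. First, for deletion in a pre-trianguloid, you write that the restricted topes are ``absorbed there after deduplication,'' but for a \emph{pre}-trianguloid pairwise compatibility is not among the axioms (it is \cref{cor:pretriext}, which is proved \emph{using} this proposition), so one must argue that two topes landing at the same position after deletion actually agree. The paper does this by first proving, via \cref{prop:treeunion}, that any two adjacent topes of a pre-trianguloid are compatible, then invoking \cref{lem:compat}; your explicit formula $T^{\mathrm{new}}_u=\mathbb{T}(u)^{i\rightarrow}$ would also close this gap, but you need to carry it through rather than appeal to deduplication. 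Second, the combinatorial sector verification in (d) is simpler than your ``carefully chosen intermediate positions with case analysis'' suggests: given $u-e_{\bar{j}}=u'-e_{\bar{j}'}$, the paper sets $v=u+e_{\bar{j}'}=u'+e_{\bar{j}}$ and $\bar{j}''=T_v(i)$; the original combinatorial sector axiom then forces $T_{u+e_{\bar{j}''}}(i)=T_{u'+e_{\bar{j}''}}(i)=\bar{j}''$, so these two topes restrict to $T^{\mathrm{new}}_u$ and $T^{\mathrm{new}}_{u'}$, and the axiom for the new pair is inherited directly from the axiom for the original pair --- no case split on tree-adjacency is needed. Your route (ii) via rebuilding $\Delta$-linkage covectors is not taken in the paper and is, as you acknowledge, the heavier way in; route (i) with the above simplification is what the paper does.
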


\begin{proof}
\begin{enumerate}
    \item[(b)] The closure axiom ensures that deleting edges from some graphs do not create two distinct partial matchings on the same set of vertices, so we still have exactly one partial matching for each pair of subsets. Since all of these partial matchings are already present in the original matching ensemble, the left/right linkage axioms are automatically preserved.

    \item[(c)] It is clear that if two graphs are compatible, then their subgraphs are also compatible. If after taking a minor there are two topes with the same position, then by \cref{lem:compat}(a) they are not compatible, so no two topes have the same position.
    
    Therefore, it suffices to show that after taking a minor there is at least one tope with a given position $v$. \begin{itemize}
        \item If we contract some index $\bar{j}\in [\bar{d}]$, the tope $T_{v'}$ where $v'_{\bar{j}} = 0$ and $v'_{\bar{j}'} = v_{\bar{j}'}$ for all $\bar{j}'\neq \bar{j}$ and will have position $v$ after contraction.
        \item If we delete some index $i\in [n]$, then take an arbitrary $\bar{j}\in [\bar{d}]$. If $T_{v + e_{\bar{j}}}(i) = \bar{j}$ then this tope will have position $v$ after deletion. Otherwise suppose that this coordinate is $\bar{j}'$. Since extended tope arrangements are pre-trianguloids, they satisfy the combinatorial sector condition of \cref{def:trianguloid}, and hence $T_{v + e_{\bar{j}'}} (i) = \bar{j}'$ and this tope will have position $v$ after deletion.
    \end{itemize}
    
    \item[(d)] We first show that any two ``adjacent'' topes of a pre-trianguloid whose positions $v$ and $v'$ differ by some $e_{\bar{j}} - e_{\bar{j}'}$ are compatible. By \cref{prop:treeunion}, since both positions are vertices of the same simplex $(v - e_{\bar{j}}) +\Delta^{d-1}$, these two topes are both subgraphs of a spanning tree. Since a tree cannot contain two distinct matchings on the same set of vertices (or else the union contains a cycle), any two subgraphs of it must be pairwise compatible.

    Contraction of a pre-trianguloid is simply taking the subset of topes whose right degrees are zero at the contracted index, so the same conditions are trivially satisfied. 
    
    If two topes have the same position vector after deleting an index, then they must be adjacent before deletion and hence must be compatible with each other (and hence the same after deletion). The existence of a tope with any given position follows from the same argument as in the proof of (c). 
    
    It remains to show that the combinatorial sector condition is preserved under deletion. Suppose we delete index $i\in [n]$. If $u, u'\in (n-1)\Delta^{d-1}$ are such that $u - e_{\bar{j}} = u' - e_{\bar{j}'}$ for some $\bar{j}, \bar{j}'\in [\bar{d}]$, then let $v = u + e_{\bar{j}'} = u' + e_{\bar{j}}$. If $T_v(i) = \bar{j}''$ (which may be the same as $\bar{j}$ or $\bar{j}'$), then the original condition implies that $T_{u + e_{\bar{j}''}}(i) = T_{u' + e_{\bar{j}''}}(i) = \bar{j}''$, and hence $T_{u + e_{\bar{j}''}}$ and $T_{u' + e_{\bar{j}''}}$ become $T_u$ and $T_{u'}$ respectively after deletion. Since the condition is satisfied with the first pair of topes, it is also satisfied for the second pair, as desired.

\end{enumerate}
\end{proof}

\begin{corollary}[Resolving \cite{FieldLattice}, Question 3.35]\label{cor:pretriext}
    The topes of a pre-trianguloid are pairwise compatible, so pre-trianguloids and extended tope arrangements are cryptomorphic objects.
\end{corollary}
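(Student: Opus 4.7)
The plan is to prove by induction on $n$ that the topes of any $(n,d)$-pre-trianguloid $\mathcal{T}$ are pairwise compatible; combined with \cref{thm:extpretri}, this yields the cryptomorphism.

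Suppose for contradiction that two distinct topes $T_v, T_{v'} \in \mathcal{T}$ are incompatible, so they contain perfect matchings $M \neq M'$ on some common pair $(I^*, \bar{J}^*)$ with $|I^*| = |\bar{J}^*|$. The symmetric difference $M \Delta M'$ decomposes as a disjoint union of even cycles in $M \cup M'$, and selecting any single such cycle $C$ yields subsets $I \subseteq I^*$ and $\bar{J} \subseteq \bar{J}^*$ with $|I| = |\bar{J}| = k \geq 2$ on which the induced sub-matchings $M|_C \subseteq T_v$ and $M'|_C \subseteq T_{v'}$ still disagree. I will then split based on the size of $I$: if $k = n$, then $I = [n]$ forces both $T_v$ and $T_{v'}$ to have right degree vector exactly $\mathbf{1}_{\bar{J}}$, so by the uniqueness of topes per lattice point in a pre-trianguloid we get $T_v = T_{v'}$, contradicting $M|_C \neq M'|_C$.

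Otherwise $k < n$, so I pick any $i \in [n]\setminus I$ and pass to the $i$-deletion $\mathcal{T}_{\setminus i}$, which is a pre-trianguloid of size $n-1$ by part (d) of the preceding proposition. The restrictions $T_v|_{[n]\setminus\{i\}}$ and $T_{v'}|_{[n]\setminus\{i\}}$ are topes of $\mathcal{T}_{\setminus i}$, and they still contain $M|_C$ and $M'|_C$ respectively since the cycle $C$ does not touch $i$ by construction. The inductive hypothesis applied to $\mathcal{T}_{\setminus i}$ forces these restricted topes to be compatible, yielding $M|_C = M'|_C$ and the desired contradiction.

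The base case $n = 1$ is vacuous since any incompatibility witness requires a cycle of size $k \geq 2 > n$. The main potential subtlety is whether the restricted topes in $\mathcal{T}_{\setminus i}$ remain witnesses to incompatibility, but this is straightforward to handle uniformly: either the two restrictions are distinct topes and the inductive hypothesis applies directly, or they coincide as a single tope of the minor and their common restriction to $(I, \bar{J})$ trivially forces $M|_C = M'|_C$; in either case the contradiction is reached.
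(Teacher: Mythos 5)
Your proof is correct and uses the same key idea as the paper: reduce, via minors of pre-trianguloids (part (d) of the preceding proposition), to a situation where the two incompatible topes would have the same position, contradicting uniqueness of topes per lattice point. The paper does this in one shot — taking the $(I,[\bar{d}])$-minor so that both restricted topes have position $\mathbf{1}_{\bar{J}}$ — whereas you run an induction on $n$ and delete one vertex at a time, with an extra (correct but not strictly necessary) cycle-decomposition step to ensure $k \geq 2$; both arrive at the same contradiction.
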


\begin{proof}
    If any two topes of a pre-trianguloid are incompatible, then they disagree on a matching between $I\subseteq [n]$ and $\bar{J}\subseteq [\bar{d}]$. If we take the $(I, [\bar{d}])$-minor of the pre-trianguloid, the two topes will both have position $\mathbf{1}_{\bar{J}}$ in the new pre-trianguloid and remain distinct, which is impossible.

    Therefore, pre-trianguloids are extended tope arrangements, and combined with \cref{thm:extpretri} we see that the two are cryptomorphic.
\end{proof}

We also note that minors are also used to obtain matchings in a matching ensemble from a generic tropical oriented matroid:

\begin{proposition}[\cite{Ensembles}, Proposition 4.3 and Lemma 4.5]\label{prop:extraction}
    Given a generic tropical oriented matroid $\mathcal{T}$, for any $I\subseteq [n]$ and $\bar{J}\subseteq [\bar{d}]$ such that $|I| = |\bar{J}|$, let $M_{I, \bar{J}}$ be the unique tope of $\mathcal{T}|_{I, \bar{J}}$ with position $\mathbf{1}_{\bar{J}}$. The matchings $\{M_{I, \bar{J}}\}$ form a matching ensemble $\mathcal{M}$. We say that $\mathcal{M}$ is \emph{extracted} from $\mathcal{T}$.

    Moreover, matching ensembles extracted from distinct generic tropical oriented matroids are also distinct.
\end{proposition}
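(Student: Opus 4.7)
The plan is to verify each axiom of a matching ensemble---closure plus left and right linkage---and then argue distinctness by recovering $\mathcal{T}$ from $\mathcal{M}$. Well-definedness of $M_{I, \bar{J}}$ is immediate: since generic tropical oriented matroids are closed under minors (proved earlier in this section), $\mathcal{T}|_{I, \bar{J}}$ is itself a generic $(|I|, |I|)$-tropical oriented matroid, and by \cref{rem:topes} it contains a unique tope at every lattice position of $|I|\Delta^{|I|-1}$. In particular the tope at $\mathbf{1}_{\bar{J}}$ has $LD = \mathbf{1}_I$ and $RD = \mathbf{1}_{\bar{J}}$, so it is a perfect matching between $I$ and $\bar{J}$.

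For closure, I would use the associativity $(\mathcal{T}|_{I, \bar{J}})|_{I', \bar{J}'} = \mathcal{T}|_{I', \bar{J}'}$ together with compatibility inside the larger minor $\mathcal{T}|_{I, \bar{J}}$. Viewing $M_{I', \bar{J}'}$ as a semitype of the extended completion of $\mathcal{T}|_{I, \bar{J}}$ (with empty coordinates on $I \setminus I'$), it lifts via \cref{lem:refinehonest} to an honest type that must be compatible with $M_{I, \bar{J}}$; this forces the two graphs to agree on the common matching, giving $G(M_{I', \bar{J}'}) \subseteq G(M_{I, \bar{J}})$. For the left-linkage axiom with $|I| = |\bar{J}| + 1$, the minor $\mathcal{T}|_{I, \bar{J}}$ is a generic $(|\bar{J}| + 1, |\bar{J}|)$-tropical oriented matroid, and by \cref{thm:treedegrees} it contains a unique tree-type $\mathbb{T}_{I, \bar{J}}$ with right degree vector $\mathbf{2}_{\bar{J}}$ (corresponding to $RD^- = \mathbf{1}_{\bar{J}}$). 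Each tope $M_{I \setminus \{i\}, \bar{J}}$ lifts by adjoining a single edge $(i, \bar{j})$ to a tope of $\mathcal{T}|_{I, \bar{J}}$ at the base-simplex vertex $\mathbf{1}_{\bar{J}} + e_{\bar{j}}$ and is therefore a subgraph of $\mathbb{T}_{I, \bar{J}}$. Conversely, for every edge $(i, \bar{j}) \in \mathbb{T}_{I, \bar{J}}$, the tope at $\mathbf{1}_{\bar{J}} + e_{\bar{j}}$ contains $(i, \bar{j})$ (since $\bar{j}$ has degree $2$ there), and removing the other neighbor $i'$ of $\bar{j}$ in $\mathbb{T}_{I, \bar{J}}$ yields $M_{I \setminus \{i'\}, \bar{J}}$. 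Hence $\bigcup_{i \in I} M_{I \setminus \{i\}, \bar{J}} = \mathbb{T}_{I, \bar{J}}$ as required; right linkage follows by the symmetric argument swapping the roles of $[n]$ and $[\bar{d}]$.

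For distinctness, I would reconstruct the topes of $\mathcal{T}$ from $\mathcal{M}$ using iterated amalgamation as discussed around \cref{prop:ipush}: each tope $T_v$ arises as the $\bar{j}$-pull of a tree built from linkage covectors in $\mathcal{M}$, and once all topes are known, $\mathcal{T}$ is determined by \cref{thm:TOMtreetope}(b). I expect the closure step to be the main obstacle, because the lift of $M_{I', \bar{J}'}$ furnished by \cref{lem:refinehonest} is not canonical; one must argue carefully that the chosen lift actually restricts to $M_{I', \bar{J}'}$ on $I' \sqcup \bar{J}'$ and that its compatibility with $M_{I, \bar{J}}$ yields the full subgraph inclusion rather than only partial agreement on some common sub-matching.
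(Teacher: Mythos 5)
The paper cites this proposition from Oh--Yoo (\cite{Ensembles}, Prop.~4.3 and Lemma~4.5) rather than proving it, so there is no in-paper argument to compare against; I will assess your proposal on its own terms.

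Your well-definedness and left-linkage arguments are essentially correct. For left linkage the clean version of the key point is: in $\mathcal{T}|_{I,\bar{J}}$, each tope $T_{\mathbf{1}_{\bar{J}}+e_{\bar{j}}}$, minus either of its two edges at $\bar{j}$, survives the corresponding $i$-deletion and so equals $M_{I\setminus\{i\},\bar{J}}$ by uniqueness of the tope at position $\mathbf{1}_{\bar{J}}$; together with Proposition~\ref{prop:treeunion} this gives both inclusions and the right-degree vector $\mathbf{2}_{\bar{J}}$.

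The closure step you flagged is where the gap lives, and it is real, but the fix is simpler than the route you sketch. First note that $G(M_{I',\bar{J}'})\subseteq G(M_{I,\bar{J}})$, with both sides perfect matchings, is only even possible when $M_{I,\bar{J}}$ maps $I'$ onto $\bar{J}'$; closure is implicitly restricted to such compatible sub-pairs, and the compatibility of a lift $T^+$ with $M_{I,\bar{J}}$ can only bite under exactly this hypothesis --- this is precisely your worry that one might only get ``partial agreement.'' But there is no need for Lemma~\ref{lem:refinehonest} or a lift at all: restrict $M_{I,\bar{J}}$ directly through the minor. Deleting $I\setminus I'$ from $\mathcal{T}|_{I,\bar{J}}$ keeps the image of $G(M_{I,\bar{J}})$ (its left degrees are still $\mathbf{1}_{I'}$, so it is not discarded); contracting $\bar{J}\setminus\bar{J}'$ then only removes vertices that are already isolated in this image, leaving a perfect matching on $I'\sqcup\bar{J}'$ at position $\mathbf{1}_{\bar{J}'}$. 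By the same uniqueness you already invoked, this is $M_{I',\bar{J}'}$, so $G(M_{I',\bar{J}'})=G(M_{I,\bar{J}})\cap(I'\sqcup\bar{J}')$ with no non-canonical choice to worry about. The distinctness sketch via iterated amalgamation and Theorem~\ref{thm:TOMtreetope}(b) is the right idea; it just needs the observation (which the paper notes in passing) that each $M_{I,\bar{J}}$ lies in the completion of $\mathcal{T}$, so the amalgamated topes must agree with those of $\mathcal{T}$ by the uniqueness in the iterated-amalgamation corollary.
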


Note that the extracted matching ensemble is naturally contained in the extended generic tropical oriented matroid.

\section{From Right Linkage to Tree Compatibility}\label{sec:equivaxiom}

The primary goal of this section is to provide a proof to \cref{thm:TOMensembles}. Since pre-trianguloids and extended tope arrangements are also cryptomorphic to matching left-semi-ensembles, it suffices to show that the additional right-linkage axiom from matching ensembles translates to the condition that any two trees $\mathbb{T}(v)$ in \cref{prop:treeunion} are compatible. 

First, we mention one additional property of (extended) tope arrangements:

\begin{proposition}[\cite{FieldLattice}, Proposition 3.22 and Corollary 3.24]
    Given an extended tope arrangement $\mathcal{T}$ and a lattice point $w\in (n+1)\Delta^{d-1}$, let $\overline{\textnormal{supp}}(w)$ be the set of indices $\bar{j}\in [\bar{d}]$ for which $w_{\bar{j}}\geq 1$. The intersection of topes $\bigcap_{\bar{j}\in \overline{\textnormal{supp}}(w)} T_{w-e_{\bar{j}}}$ is a partial left semi-matching $\Omega(w)$ with right degree vector $w - \mathbf{1}_{\overline{\textnormal{supp}}(w)}$. In particular, the neighbors of $\bar{j}$ in $\Omega(w)$ are exactly the neighbors in $T_{w-e_{\bar{j}}}$.
    Moreover, we have $T_v = \bigcup_{\bar{j}\in [\bar{d}]} \Omega(v + e_{\bar{j}})$.
\end{proposition}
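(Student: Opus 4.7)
The plan is to extract both claims directly from the combinatorial sector axiom, which extended tope arrangements satisfy via \cref{thm:extpretri} and \cref{cor:pretriext}; no deeper structural input should be required.

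First I would fix $\bar{j}_1 \in \overline{\textnormal{supp}}(w)$ and show that $\Omega(w)^{-1}(\bar{j}_1) = T_{w-e_{\bar{j}_1}}^{-1}(\bar{j}_1)$. The inclusion ``$\subseteq$'' is immediate from $\Omega(w)$ being an intersection containing $T_{w-e_{\bar{j}_1}}$. For ``$\supseteq$'', each other $\bar{j} \in \overline{\textnormal{supp}}(w)$ gives positions $v = w - e_{\bar{j}}$ and $v' = w - e_{\bar{j}_1}$ satisfying $v - e_{\bar{j}_1} = v' - e_{\bar{j}}$, so the combinatorial sector axiom produces $T_{w-e_{\bar{j}_1}}^{-1}(\bar{j}_1) \subset T_{w-e_{\bar{j}}}^{-1}(\bar{j}_1)$; intersecting over all such $\bar{j}$ yields the reverse containment. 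For $\bar{j}_1 \notin \overline{\textnormal{supp}}(w)$, every tope in the intersection already has empty $\bar{j}_1$-preimage, since its position has $\bar{j}_1$-coordinate $w_{\bar{j}_1} = 0$. Adding up these preimage sizes gives $RD(\Omega(w)) = w - \mathbf{1}_{\overline{\textnormal{supp}}(w)}$, and $\Omega(w)$ is a partial left semi-matching because each tope has every $i \in [n]$ of degree exactly $1$, whence the intersection has every $i$ of degree at most $1$.

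For the ``moreover'' clause, I would fix $v$ and $\bar{j}' \in [\bar{d}]$ and show that each set $\Omega(v+e_{\bar{j}})^{-1}(\bar{j}')$ lies in $T_v^{-1}(\bar{j}')$, with equality when $\bar{j} = \bar{j}'$. The equality case follows from the first claim applied with $w = v + e_{\bar{j}'}$, since then $w - e_{\bar{j}'} = v$. For $\bar{j} \neq \bar{j}'$ with $\bar{j}' \in \overline{\textnormal{supp}}(v+e_{\bar{j}})$, the first claim identifies $\Omega(v+e_{\bar{j}})^{-1}(\bar{j}')$ with $T_{v+e_{\bar{j}}-e_{\bar{j}'}}^{-1}(\bar{j}')$, and a second application of the combinatorial sector axiom to the pair $v$ and $v+e_{\bar{j}}-e_{\bar{j}'}$ (which differ by $e_{\bar{j}} - e_{\bar{j}'}$) shows this is contained in $T_v^{-1}(\bar{j}')$. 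Taking the union over $\bar{j}$ then recovers $T_v$ coordinate by coordinate.

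I do not anticipate a genuine obstacle: the entire argument reduces to two applications of the combinatorial sector axiom. The main care is to check in each invocation which of the two positions has the larger coordinate in the relevant direction, so that the inclusion $\subset$ is oriented correctly.
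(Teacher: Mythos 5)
The paper does not supply its own proof of this statement; it is imported verbatim from \cite{FieldLattice} (Proposition~3.22 and Corollary~3.24). Your argument is correct, and it is a clean self-contained derivation inside the framework of this paper. The two applications of the combinatorial sector axiom are oriented correctly: for the first claim, setting $v = w - e_{\bar{j}}$, $v' = w - e_{\bar{j}_1}$ gives $v - e_{\bar{j}_1} = v' - e_{\bar{j}}$, so the axiom yields $T_{w-e_{\bar{j}_1}}^{-1}(\bar{j}_1) \subset T_{w-e_{\bar{j}}}^{-1}(\bar{j}_1)$ as you state; and for the ``moreover'' clause the pair $v, v+e_{\bar{j}}-e_{\bar{j}'}$ is handled analogously. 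The degree count and the partial-left-semi-matching claim both follow immediately. One caveat worth flagging: you invoke \cref{thm:extpretri} (extended tope arrangements are pre-trianguloids, i.e.\ satisfy combinatorial sector) to license the axiom, but in the cited source that result is Proposition~3.34, which appears \emph{after} Proposition~3.22 and Corollary~3.24 and likely builds on them. So while your argument is logically valid given the imported statements in this paper, it would be circular if one tried to push it back into \cite{FieldLattice}'s own development. A version that derives the combinatorial sector containment directly from pairwise tope compatibility (the defining property of an extended tope arrangement) would avoid this dependency and is not much harder; as it stands, though, your proof correctly establishes the proposition from the results available in the paper.
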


We call $\Omega(w)$ a \emph{Chow covector} of $\mathcal{T}$. These covectors originate from the Chow polytope of the variety of degenerate matrices (\cite{MaxMinors}, Section 5), and are closely related to the minimal transversals of the underlying (pointed) matching field (\cite{Fields}, Theorem 1).

It turns out that the \emph{union} of these topes is also interesting when right linkage is satisfied.

\begin{proposition}\label{prop:othertreeunion}
    Given a matching ensemble $\mathcal{M}$, let $\mathcal{T}$ be the extended tope arrangement obtained by iterated amalgamation from $\mathcal{M}$. For a lattice point $w\in (n+1)\Delta^{d-1}$, the union of topes $\bigcup_{\bar{j}\in \overline{\textnormal{supp}}(w)} T_{w-e_{\bar{j}}}$ is a spanning tree $\bar{\mathbb{T}}(w)$ on $[n]\sqcup \overline{\textnormal{supp}}(w)$ whose left degrees are all either 1 or 2. In particular, we have $T_{w-e_{\bar{j}}} = \Omega(w) \sqcup (\bar{\mathbb{T}}(w))^{\leftarrow \bar{j}}$.
\end{proposition}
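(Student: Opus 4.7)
The plan is to decompose each tope $T_{w-e_{\bar{j}}}$ as $\Omega(w) \sqcup M_{I(w), S\setminus\{\bar{j}\}}$, where $S := \overline{\textnormal{supp}}(w)$ and $I(w) := [n] \setminus V(\Omega(w))$, and then apply the right linkage axiom of $\mathcal{M}$ to the pair $(I(w), S)$ to assemble the residual matchings into a tree. Since $\Omega(w)$ has right degree $w - \mathbf{1}_S$, we have $|V(\Omega(w))| = n + 1 - |S|$ and hence $|I(w)| + 1 = |S|$, so right linkage does apply to $(I(w), S)$.

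For the decomposition, since $\Omega(w) \subseteq T_{w-e_{\bar{j}}}$, comparing degree vectors shows that $T_{w-e_{\bar{j}}} \setminus \Omega(w)$ has left degree $\mathbf{1}_{I(w)}$ and right degree $\mathbf{1}_{S\setminus\{\bar{j}\}}$, hence is a perfect matching between $I(w)$ and $S\setminus\{\bar{j}\}$. Because $\mathcal{T}$ arises from $\mathcal{M}$ via iterative amalgamation, each tope of $\mathcal{T}$ is compatible with every matching of $\mathcal{M}$ (this is inductive from the amalgamation construction, where each amalgamation step preserves compatibility with matchings used so far); in particular, $T_{w-e_{\bar{j}}}$ and $M_{I(w), S\setminus\{\bar{j}\}}$ both contain a perfect matching on $I(w) \sqcup (S\setminus\{\bar{j}\})$, so compatibility forces $T_{w-e_{\bar{j}}} \setminus \Omega(w) = M_{I(w), S\setminus\{\bar{j}\}}$. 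Taking the union of the resulting decomposition over $\bar{j} \in S$ and invoking right linkage yields $\bar{\mathbb{T}}(w) = \Omega(w) \sqcup \bar{\mathbb{T}}_{I(w), S}$, where $\bar{\mathbb{T}}_{I(w), S} = \bigcup_{\bar{j} \in S} M_{I(w), S\setminus\{\bar{j}\}}$ is a spanning tree on $I(w) \sqcup S$ with left degree vector $\mathbf{2}_{I(w)}$; the disjoint union is immediate since the two parts have disjoint left supports. An edge count $(n+1-|S|) + (2|S|-2) = n + |S| - 1$, together with the fact that each leaf in $[n]\setminus I(w)$ is attached via $\Omega(w)$ to the spanning subtree $\bar{\mathbb{T}}_{I(w), S}$, confirms that $\bar{\mathbb{T}}(w)$ is a spanning tree on $[n] \sqcup S$ with left degrees $1$ on $[n]\setminus I(w)$ and $2$ on $I(w)$.

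For the final identity $T_{w-e_{\bar{j}}} = \Omega(w) \sqcup (\bar{\mathbb{T}}(w))^{\leftarrow \bar{j}}$, it suffices to show $(\bar{\mathbb{T}}(w))^{\leftarrow \bar{j}} = M_{I(w), S\setminus\{\bar{j}\}}$. Rooting $\bar{\mathbb{T}}(w)$ at $\bar{j}$, each leaf $i \in [n]\setminus I(w)$ has its unique edge oriented from $S$ into $[n]$ and so contributes nothing to the $\bar{j}$-push, while each degree-$2$ vertex $i \in I(w)$ contributes exactly its edge to its child in $S\setminus\{\bar{j}\}$. The resulting push is a perfect matching between $I(w)$ and $S\setminus\{\bar{j}\}$ lying inside the subtree $\bar{\mathbb{T}}_{I(w), S}$; since a tree contains at most one matching on any given vertex subset, this push coincides with $M_{I(w), S\setminus\{\bar{j}\}}$, as required. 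The main obstacle is establishing the compatibility between iteratively amalgamated topes and the base matchings of $\mathcal{M}$ in the decomposition step; once this is in place, the remaining computations are largely routine edge counts and tree-structural arguments.
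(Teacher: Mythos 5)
Your proof is correct and follows essentially the same strategy as the paper's: decompose $T_{w-e_{\bar{j}}}$ as $\Omega(w)$ plus a residual matching of the ensemble on $I(w)\sqcup (S\setminus\{\bar{j}\})$, then invoke the right linkage axiom to assemble those residuals into the tree $\bar{\mathbb{T}}_{I(w),S}$. The main cosmetic differences are that the paper first reduces to the full-support case via a $([n],\overline{\textnormal{supp}}(w))$-minor and then deletes the indices covered by $\Omega(w)$ from the tope arrangement, quoting that "taking minors commutes with amalgamation'' to conclude the residuals lie in $\mathcal{M}$, whereas you work directly with $S=\overline{\textnormal{supp}}(w)$ and identify the residual with $M_{I(w),S\setminus\{\bar{j}\}}$ via compatibility of the amalgamated topes with the base matchings; and for the final pull identity, the paper cites \cref{prop:ipush} while you verify it directly by rooting $\bar{\mathbb{T}}(w)$ at $\bar{j}$ and counting degree-$1$ versus degree-$2$ left vertices. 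Both routes are valid and rest on the same two ingredients (the Chow covector and right linkage); your version is somewhat more self-contained, at the cost of having to state and justify the compatibility between topes of $\mathcal{T}$ and matchings of $\mathcal{M}$ (which does hold, since the matchings of $\mathcal{M}$ are subgraphs of topes, and topes are pairwise compatible by \cref{cor:pretriext}).
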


\begin{proof}
    It suffices to prove this proposition for $\overline{\textnormal{supp}}(w) = [\bar{d}]$: if $\overline{\textnormal{supp}}(w)\neq [\bar{d}]$, then we can first take the $([n], \overline{\textnormal{supp}}(w))$-minor of $\mathcal{M}$ and $\mathcal{T}$. (Vertices not in $[n]\sqcup \overline{\textnormal{supp}}(w)$ will not be connected to any edges among all relevant topes. It is also clear that taking minor commutes with amalgamation.)

    In this case, we note that the Chow covector $\Omega(w)$ has exactly $n+1-d$ edges. Let $I$ be the set of vertices in $[n]$ with degree 1 in $\Omega(w)$ (so $|I| = n+1-d$). Since $\Omega(w)$ is a common subgraph of all $T_{w - e_{\bar{j}}}$, if we delete all indices in $I$ from $\mathcal{T}$, these topes will now have distinct right degree vectors $\mathbf{1}_{[\bar{d}]\setminus \{\bar{j}\}}$ and the same left degree vector $\mathbf{1}_{[n]\setminus I}$. In particular, these are now all partial matchings in the original matching ensemble. Using the right linkage axiom on these partial matchings, we see that the union is a spanning tree $\bar{\mathbb{T}}_{[n]\setminus I, [\bar{d}]}$ with left degree vector $\mathbf{2}_{[n]\setminus I}$. Therefore, the union of the original topes is $\Omega(w)\sqcup \bar{\mathbb{T}}_{[n]\setminus I, [\bar{d}]}$, which is a spanning tree on $[n]\sqcup [\bar{d}]$ with left degree vector $\mathbf{2}_{[n]\setminus I} + \mathbf{1}_I$. 
    
    Recovering $T_{w-e_{\bar{j}}}$ from $\bar{T}(w)$ can be shown in the same way as \cref{prop:ipush}, taking into account that the edges in $\Omega(w)$ are never in a $\bar{j}$-push but appear in all topes in question.
\end{proof}

In other words, while the left linkage axiom translates to the fact that the union of topes around a right-side-up unit simplex $u + \Delta^{d-1}$ is a tree, the additional right linkage axiom translates to the fact that the union around an \emph{upside-down} unit simplex $w + \nabla^{d-1}$ is also a tree, where $\nabla^{d-1}:= \textnormal{conv}(\{-e_{\bar{j}}\mid \bar{j}\in [\bar{d}]\})$. (This upside-down simplex might not be fully contained in $n\Delta^{d-1}$ in case $w$ does not have full support, in which case we only take the vertices that are in $n\Delta^{d-1}$.) We call these two types of trees \emph{$\Delta$-linkage covectors} and \emph{$\nabla$-linkage covectors}, respectively.

\begin{remark}
    We note that $\nabla$-linkage by itself also implies the combinatorial sector axiom. Indeed, suppose that $v - e_{\bar{j}} = v' - e_{\bar{j}'}$, then let $w = v + e_{\bar{j}'} = v' + e_{\bar{j}}$. Since there are no neighbors of $\bar{j}$ in $(\bar{\mathbb{T}}(w))^{\leftarrow \bar{j}}$ and one neighbor in $(\bar{\mathbb{T}}(w))^{\leftarrow \bar{j}'}$ for any $\bar{j}'\neq \bar{j}$, we see that $T^{-1}_{v}(\bar{j})$ is a superset of $T^{-1}_{v'}(\bar{j})$ (by one element). Therefore, $\nabla$-linkage is a strictly stronger condition than $\Delta$-linkage, and encompasses both left and right linkage from matching ensembles.
\end{remark}

In the language of fine mixed subdivisions, a $\Delta$-linkage covector encodes the unique Minkowski cell containing the corresponding right-side-up simplex, and a $\nabla$-linkage covector encodes the unique \emph{parallelotope} containing the corresponding upside-down simplex. See \cref{fig:linkagecovector} for an example. (Note that since a parallelotope is a zonotope, it corresponds to a graph whose left degrees are all either 1 or 2.) We leave the claim that a fine Minkowski cell contains an upside-down simplex if and only if it is a parallelotope as an exercise for the reader.

\begin{figure}[h!]
    \centering
    \includegraphics{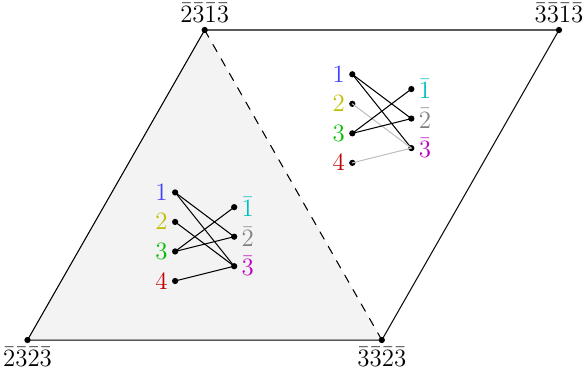}
    \caption{Example of a $\Delta$-linkage covector $\mathbb{T}((0,1,2))$ and a $\nabla$-linkage covector $\bar{\mathbb{T}}((1,1,3))$ from one of the fine Minkowski cells in the fine mixed subdivision of \cref{fig:FMSexample}. For the $\nabla$-linkage covector, the edges that belong to the Chow covector $\Omega((1,1,3))$ are in gray. Note that the two linkage covectors are the same, as they encode the same cell that contains both the corresponding right-side-up simplex $(0,1,2)+\Delta^2$ (lightly shaded) and the upside-down simplex $(1,1,3)+\nabla^2$ (unshaded).}\label{fig:linkagecovector}
\end{figure}

We now show that $\nabla$-linkage implies compatibility between $\Delta$-linkage covectors.

\begin{proposition}\label{prop:toperefine}
    Given an extended tope arrangement $\mathcal{T}$, for each lattice point $u\in (n-1)\Delta^{d-1}$, let $\mathbb{T}(u) = \bigcup_{\bar{j}\in [\bar{d}]} T_{u+e_{\bar{j}}}$. If for any lattice point $w \in (n+1)\Delta^{d-1}$, the graph $\bar{\mathbb{T}}(w) = \bigcup_{\bar{j}\in \overline{\textnormal{supp}}(w)} T_{w - e_{\bar{j}}}$ is acyclic, then any tope that is a subgraph of $\mathbb{T}(u)$ is a tope of $\mathcal{T}$.
\end{proposition}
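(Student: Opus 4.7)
The plan is to reduce the claim to showing the reverse containment $T_v \subseteq \mathbb{T}(u)$. Once this is established, both $T$ and $T_v$ are left semi-matching subgraphs of the bipartite tree $\mathbb{T}(u)$ with the same right degree vector $v$, and any bipartite tree contains at most one such matching: the symmetric difference of two would be a non-empty subforest in which every vertex has even degree, which is impossible. Hence $T = T_v$.

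To show $T_v \subseteq \mathbb{T}(u)$, I would induct on the graph distance from $v$ to the set $\{u + e_{\bar{j}} : \bar{j}\in[\bar{d}]\}$ in the graph whose vertices are the ``achievable'' positions $\{RD(T') : T' \text{ is a tope subgraph of } \mathbb{T}(u)\}$ and whose edges connect positions whose corresponding tope subgraphs of $\mathbb{T}(u)$ differ in a single coordinate. This graph is the $1$-skeleton of the Minkowski cell encoded by $\mathbb{T}(u)$, hence connected. The base case $v = u + e_{\bar{j}}$ is handled by \cref{prop:treeunion}: $T_v = (\mathbb{T}(u))^{\rightarrow \bar{j}} \subseteq \mathbb{T}(u)$.

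For the inductive step, pick an adjacent position $v' = v + e_{\bar{j}'} - e_{\bar{j}}$ closer to the base set, with corresponding tope subgraph $T' \subseteq \mathbb{T}(u)$ differing from $T$ at a single index $i_0$ (so $T(i_0) = \bar{j}$ and $T'(i_0) = \bar{j}'$). By induction, $T_{v'} = T'$. Now invoke the $\nabla$-linkage covector $\bar{\mathbb{T}}(w)$ for $w = v + e_{\bar{j}'} = v' + e_{\bar{j}}$: by hypothesis it is acyclic and contains both $T_v$ and $T_{v'}$. A right-degree parity count (using the combinatorial sector axiom at $\bar{j}$ and $\bar{j}'$) shows that $T_v \triangle T_{v'}$, as a subforest of $\bar{\mathbb{T}}(w)$, has odd-degree vertices only at $\bar{j}$ and $\bar{j}'$, so it equals the unique $\bar{j}$-to-$\bar{j}'$ path in $\bar{\mathbb{T}}(w)$.

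The main obstacle is to show this path has length two---namely $\bar{j} - i_0 - \bar{j}'$---which would immediately give $T_v = T_{v'} \triangle \{(i_0, \bar{j}), (i_0, \bar{j}')\} = T$ and close the induction. The edge $(i_0, \bar{j}')$ is already in $T_{v'} \subseteq \bar{\mathbb{T}}(w)$, so the task reduces to showing $(i_0, \bar{j}) \in \bar{\mathbb{T}}(w)$. Since $(i_0, \bar{j}) \in \mathbb{T}(u)$, there exists some $\bar{m}$ with $T_{u+e_{\bar{m}}}(i_0) = \bar{j}$, placing $u+e_{\bar{m}}$ in the combinatorial sector $S_{i_0}^{(\bar{j})}$; one then uses the sector's closure under moves toward $\bar{j}$, combined with the specific form of $w$ and the acyclicity of $\bar{\mathbb{T}}(w)$, to locate some $\bar{l} \in \overline{\textnormal{supp}}(w)$ with $T_{w-e_{\bar{l}}}(i_0) = \bar{j}$, placing $(i_0, \bar{j})$ into $\bar{\mathbb{T}}(w)$. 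This step---navigating between the sector conditions coming from $\mathbb{T}(u)$ and the tree constraints of $\bar{\mathbb{T}}(w)$---is the delicate part, and it is precisely where the hypothesis that $\bar{\mathbb{T}}(w)$ is acyclic does the essential work, by ruling out alternative longer routes from $\bar{j}$ to $\bar{j}'$.
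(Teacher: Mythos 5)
The reduction at the top is sound — if $T_v \subseteq \mathbb{T}(u)$ then the parity argument on $T\triangle T_v$ inside the tree $\mathbb{T}(u)$ forces $T=T_v$ — and so is the identification of $T_v\triangle T_{v'}$ with the unique $\bar{j}$--$\bar{j}'$ path in the acyclic $\bar{\mathbb{T}}(w)$. But the step you flag as ``the delicate part'' is a genuine gap, not a detail to be filled in. Proving $(i_0,\bar{j})\in\bar{\mathbb{T}}(w)$ amounts to proving $T_{w-e_{\bar{l}}}(i_0)=\bar{j}$ for some $\bar{l}\in\overline{\textnormal{supp}}(w)$; for $\bar{l}=\bar{j}'$ that is exactly the goal $T_v(i_0)=\bar{j}$, and for $\bar{l}=\bar{j}$ you already know $T_{w-e_{\bar{j}}}(i_0)=\bar{j}'$. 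Your sector-closure sketch for locating some other $\bar{l}$ does not go through: the only base positions you can place in $S_{i_0}^{(\bar{j})}$ are $u+e_{\bar{m}}$ with $\bar{m}$ on the $\bar{j}$-side of the hyperedge $i_0$, whose $\bar{j}$-coordinates are at least $u_{\bar{j}}$, and moving toward $ne_{\bar{j}}$ only raises the $\bar{j}$-coordinate further — whereas $(w-e_{\bar{l}})_{\bar{j}}=v_{\bar{j}}$ can be strictly smaller than $u_{\bar{j}}$ (e.g.\ $v_{\bar{j}}=1$ with $u_{\bar{j}}$ large). So the target positions are generically unreachable by the moves the combinatorial sector axiom controls.

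There is also a structural reason this induction is hard to close. The paper recasts topes $T\subseteq\mathbb{T}(u)$ as orientations of the hypertree $\mathbb{T}(u)$ and proves a ``tug-of-war'' lemma: if two orientations \emph{both already known to be good} differ exactly along a hypertree path, then acyclicity of $\bar{\mathbb{T}}(w)$ forces all intermediate orientations along that path to be good. Your inductive step reduces to flipping a single hyperedge $i_0$, i.e.\ a path of length one, which is exactly the degenerate case where this lemma produces nothing new (there are no intermediates and only one endpoint is known to be good). The paper avoids this by inducting on the number of hyperedges rather than on distance to the base simplex, only ever tugging between pairs of already-good orientations (the $\bar{j}$-pulls, and the ``modified pulls'' obtained from them). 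Your approach therefore needs a different mechanism for the one-edge step, and that mechanism is precisely what is missing.
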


\begin{proof}
    For this proof, we treat the tree $\mathbb{T} = \mathbb{T}(u)$ as a hypertree with vertex set on $[\bar{d}]$ (see \cref{rem:hypertree}). A tope $T$ that is a subgraph of $\mathbb{T}$ is the same as picking one of the vertices of each edge; we will call each tope an \emph{(edge) orientation} in this new formulation, where each edge is \emph{oriented to} the vertex we picked. For convenience, we will say that the edge $i$ is \emph{oriented towards} a vertex $\bar{j}$ (which is not necessarily a vertex of the edge) if it is in the same connected component of $\mathbb{T}\setminus i$ as the vertex we oriented the edge to. 
    
    We say that an orientation is \emph{good} if it is in $\mathcal{T}$. By \cref{prop:treeunion}, for each vertex $\bar{j}\in [\bar{d}]$, the $\bar{j}$-pull of $\mathbb{T}$ is good. In the language of hypertrees, this means that there is a good orientation where each edge is oriented towards $\bar{j}$.
    
    A \emph{path} $P$ between two vertices $\bar{j}$ and $\bar{j}'$ of the hypertree is the (unique) sequence of (distinct) edges $i_1, \dots, i_{\ell}$ that connects the two vertices, where any two adjacent edges $i_k$ and $i_{k+1}$ share exactly one vertex $\bar{j}_k$ (we also let $\bar{j}_0 = \bar{j}$ and $\bar{j}_{\ell} = \bar{j}'$ for convenience). Suppose we have two edge orientations $T$ and $T'$ in $\mathcal{T}$ that differ only on this path $P$, where the edges in $P$ are oriented towards $\bar{j}$ in $T$ and towards $\bar{j}'$ in $T'$. Let $v$ and $v'$ be the position of $T$ and $T'$ respectively; it is clear that we have $v + e_{\bar{j}'} = v' + e_{\bar{j}} = w$ for some $w\in (n+1)\Delta^{d-1}$. We see that $T\cup T'$ is a subgraph of $\bar{\mathbb{T}}(w)$. For each $k =0, 1, \dots, \ell $, let $T_k$ be the orientation obtained by orienting $i_1, \dots, i_{k-1}$ towards $\bar{j}$ and $i_{k}, \dots, i_{\ell}$ towards $\bar{j}'$ (i.e. the edges of $P$ are oriented ``away from $\bar{j}_k$''). Note that $T_0 = T'$ and $T_{\ell} = T$. It is clear that $T_k \subseteq T\cup T'$ as well, and has position $w - e_{\bar{j}_k}$. If the tope $T_{w - e_{\bar{j}_k}}$ of $\mathcal{T}$ at this position is not $T_k$, then it is clear that the union of these two topes contains a cycle, which implies that $\bar{\mathbb{T}}(w)$ also contains the same cycle. Therefore, we see that $T_1, \dots, T_{\ell - 1}$ are good. We call this process of obtaining more good orientations ``performing a \emph{tug-of-war} along $P$'' (since it essentially interpolates between a $\bar{j}$-pull and a $\bar{j}'$-pull when restricted to $P$).

    \begin{figure}[h!]
        \centering
        \includegraphics{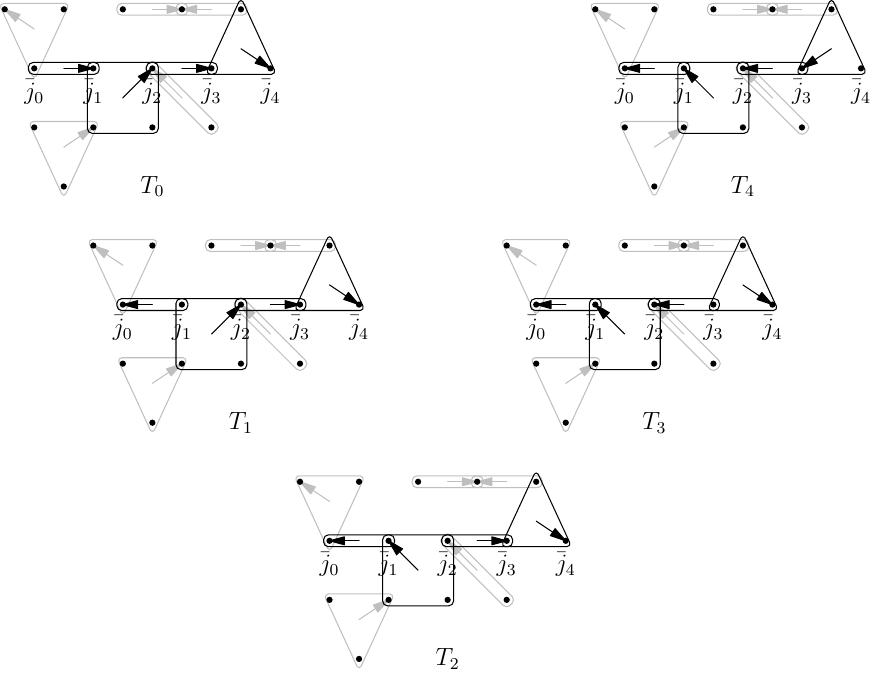}
        \caption{An example of a tug-of-war, where we generate three good orientations $T_1, T_2, T_3$ assuming that $T_0$ and $T_4$ are good. The orientation of each hyper-edge is indicated by an arrow. The gray hyper-edges are not on the path and their orientations are shared across all five orientations.}
    \end{figure}

    We show that we can use a series of tug-of-war to obtain any orientation of $\mathbb{T}$, starting from only the $\bar{j}$-pulls. Let us proceed by induction on the number $n$ of edges of $\mathbb{T}$, where the base case $n = 1$ is obvious (any orientation is a $\bar{j}$-pull in this case). We can also ignore any edge containing only a single vertex (since there is only one possible orientation for them). 
    
    Now suppose that we have shown the statement for all hypertrees with $n-1$ edges for $n\geq 2$. Consider a edge $i$ of $\mathbb{T}$ that only has one non-leaf vertex $\bar{j}_0$ (this edge exists since in the original bipartite graph formulation, removing all leaves in $[\bar{d}]$ necessarily gives a tree with at least two leaves in $[n]$). Let $\bar{J}$ be the set of vertices that are not leaves of $i$ (i.e. in the same connected component as $\bar{j}_0$ if we remove edge $i$). Consider an orientation $T$ of $\mathbb{T}$, of which there are two possible cases: \begin{itemize}
        \item Edge $i$ is oriented at vertex $\bar{j}_0$. We can remove $i$ from $T$ (and all leaf vertices of this edge) and $\mathbb{T}$ and obtain $T\setminus \{i\}$ in $\mathbb{T}\setminus \{i\}$ by induction hypothesis. Since the $\bar{j}$-pull of $\mathbb{T}$ has the same orientation for this edge as $T$ for all $\bar{j}\in \bar{J}$ and this edge is never in any path between two vertices of $\bar{J}$, the same series of tug-of-war that gives $T\setminus \{i\}$ in $\mathbb{T}\setminus \{i\}$ will give $T$ in $\mathbb{T}$.
        \item Edge $i$ is oriented at a leaf vertex $\bar{j}_1\notin \bar{J}$. For each $\bar{j}\in \bar{J}$, since any path between $\bar{j}_1$ and $\bar{j}$ must pass through vertex $\bar{j}_0$, we can perform a tug-of-war between the $\bar{j}_1$-pull and $\bar{j}$-pull of $\mathbb{T}$ (which only differ on the path between the two vertices) to get a ``modified $\bar{j}$-pull'', which differs from the $\bar{j}$-pull only in edge $i$ (which is now oriented at $\bar{j}_1$). Since all modified $\bar{j}$-pulls agree with $T$ on edge $i$, we can perform the same argument as the previous case to obtain $T$.
    \end{itemize}

    This completes the inductive step. Since all $\bar{j}$-pulls are good, we see that all orientations are good, as desired.
\end{proof}

\begin{proof}[Proof of \cref{thm:TOMensembles}]
    The statement that there is an injection from generic tropical oriented matroids to matching ensembles (via extraction) is \cref{prop:extraction}. For the reverse direction, we note that matching ensembles give rise to an extended tope arrangement and a collection of trees $\mathbb{T}(u)$, one for each $u\in (n-1)\Delta^{d-1}$, via the iterated amalgamation process. By \cref{prop:toperefine}, any tope that refines a tree is in the arrangement, so if two trees $\mathbb{T}$ and $\mathbb{T}'$ are incompatible, then there are two topes $T\subseteq \mathbb{T}$ and $T'\subseteq \mathbb{T}'$ that are also incompatible with each other. This means that the trees in the collection satisfy the compatibility axiom. Since there are exactly $\binom{n+d-2}{n-1}$ such trees, by \cref{rem:oneaxiom} we see that these trees encode a fine mixed subdivision and hence a generic tropical oriented matroid. 
    
    Since each partial matching of the original matching ensemble is a subgraph of some tope (as a result of the iterated amalgamation), it is clear that the extraction of the generic tropical oriented matroid is indeed the original matching ensemble. This shows that the extraction is also a surjection, so we have the desired bijection.
\end{proof}

\begin{remark}
    As a consequence of \cref{thm:TOMensembles}, we see that extended tope arrangements that satisfy the $\nabla$-linkage property in \cref{prop:othertreeunion} (which essentially arises from the right linkage axiom of the underlying matching ensemble) are cryptomorphic to pre-trianguloids that satisfy the hexagon axiom. 

    In fact, assuming that the conditions for extended tope arrangements (or pre-trianguloids) hold, right linkage and hexagon axiom are directly equivalent to each other: if we delete the Chow covector $\Omega(w)$ from each tope and exchange the role between $[n]$ and $[\bar{d}]$, then right linkage becomes strong linkage of \cref{thm:linkageequiv} while hexagon axiom becomes three-element linkage. This provides a shorter though less direct proof to \cref{thm:TOMensembles}.
\end{remark}

\section{A Linkage Covector for Trees}\label{sec:treelinkage}

Unlike the proof of \cref{thm:TOMensembles} in the previous section, the proof that trianguloids are cryptomorphic to fine mixed subdivisions (\cref{thm:trianguloids}) uses the tree linkage axiom of  instead of the compatibility axiom in \cref{thm:FMSaxioms} (see \cref{rem:oneaxiom}). In this section we use the tree linkage axiom to construct linkage covectors that each connects a ``local'' set of trees arising from a fine mixed subdivision. These linkage covectors have similar but more complex structures as the linkage covectors for (semi-)matchings, as they can no longer be defined using bipartite graphs.

\begin{definition}
    Given a fine mixed subdivision of $n\Delta^{d-1}$ defined by a collection of spanning trees $\mathbb{T}(u)$ (one for each $u\in (n-1)\Delta^{d-1}$) and a lattice point $t\in (n-2)\Delta^{d-1}$, define a \emph{tree-linkage graph} $\mathbb{G}(t)$ to be an edge-labeled graph on the vertex set $[\bar{d}]$, where we connect a pair of vertices $\bar{j}$ and $\bar{j}'$ if and only if $\mathbb{T}(t+e_{\bar{j}})$ and $\mathbb{T}(t+e_{\bar{j}'})$ differ in only one edge $(i, \bar{j})$ and $(i', \bar{j}')$. This edge is labeled on both ends like this: $\bar{j} \xleftrightarrow{i\quad i'}\bar{j}'$. For the vertex $\bar{j}$, we call $i$ the \emph{near label} for $\bar{j}$.
    
    We also use $G(t)$ to denote the same graph without edge labels.
\end{definition}

It is not difficult to see that $G(t)$ is also the ``facet-adjacency graph'' between the $d$ fine Minkowski cells whose positions are $t + e_{\bar{j}}$ for $\bar{j}\in [\bar{d}]$. 

\begin{remark}\label{rem:basicproperties}
    Note that the tree-linkage axiom translates to the property that for each non-leaf neighbor $i\in [n]$ of $\bar{j}$ in $\mathbb{T}(t+e_{\bar{j}})$, there is exactly one edge in $\mathbb{G}(t)$ incident to $\bar{j}$ for which the near label is $i$. (Note that since $RD(\mathbb{T}(t+e_{\bar{j}})) = t + e_{\bar{j}} + \mathbf{1}_{[\bar{d}]}$, $\bar{j}$ cannot be a leaf.) From \cref{lem:adjtrees}, the two endpoints of an edge cannot have the same label.
\end{remark}

\begin{proposition}\label{prop:treelinkagetree}
    For each lattice point $t\in (n-2)\Delta^{d-1}$, $G(t)$ is a spanning tree of $[\bar{d}]$.
\end{proposition}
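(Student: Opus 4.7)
The plan is to establish two properties of $G(t)$: it has at least $d-1$ edges (via a degree count) and is acyclic (via compatibility). Together these imply $G(t)$ is a spanning tree on $[\bar{d}]$.

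First I would analyze the degree of each $\bar{j} \in [\bar{d}]$ in $G(t)$. By the tree linkage axiom and \cref{lem:adjtrees}, the edges of $G(t)$ incident to $\bar{j}$ correspond bijectively to the internal edges $(c, \bar{j}) \in \mathbb{T}(t+e_{\bar{j}})$: the unique tree-linkage partner of such an edge has right degree vector $t + e_{\bar{j}''} + \mathbf{1}$ and must therefore be $\mathbb{T}(t+e_{\bar{j}''})$ for some $\bar{j}'' \in [\bar{d}]$, yielding an edge of $G(t)$. Since $\bar{j}$ has degree $t_{\bar{j}} + 2 \geq 2$, the edges at $\bar{j}$ that fail to be internal are precisely those $(c, \bar{j})$ with $c$ a leaf of $\mathbb{T}(t+e_{\bar{j}})$ whose sole neighbor is $\bar{j}$. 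Letting $\ell_{\bar{j}}$ count such leaves, this gives $\deg_{G(t)}(\bar{j}) = (t_{\bar{j}} + 2) - \ell_{\bar{j}}$ and hence $2\,|E(G(t))| = (n-2+2d) - \sum_{\bar{j}} \ell_{\bar{j}}$. A single $i \in [n]$ cannot be such a matched leaf in two distinct trees, because the common tope $T_{t+e_{\bar{j}}+e_{\bar{j}'}}$ (a subgraph of both trees by \cref{prop:treeunion}, and a left semi-matching) would have to assign $i$ the unique neighbors $\bar{j}$ and $\bar{j}'$ simultaneously. Hence $\sum_{\bar{j}} \ell_{\bar{j}} \leq n$ and $|E(G(t))| \geq d-1$.

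For acyclicity, suppose toward contradiction that $\bar{j}_1 - \bar{j}_2 - \cdots - \bar{j}_k - \bar{j}_1$ is a cycle of length $k \geq 3$ in $G(t)$. Tracking the single-edge swaps around the cycle and enforcing that the right-degree changes telescope, the swap from $\mathbb{T}(t+e_{\bar{j}_a})$ to $\mathbb{T}(t+e_{\bar{j}_{a+1}})$ must exchange $(c_a, \bar{j}_a) \leftrightarrow (c_{a+1}, \bar{j}_{a+1})$ with $c_a \neq c_{a+1}$, and each $(c_a, \bar{j}_a)$ lies only in $\mathbb{T}(t+e_{\bar{j}_a})$ since no later swap re-introduces it. Consequently all $k$ trees share the same forest $K = \mathbb{T}(t+e_{\bar{j}_a}) \setminus \{(c_a, \bar{j}_a)\}$, whose two components I denote $I^{(1)} \sqcup \bar{J}^{(1)}$ and $I^{(2)} \sqcup \bar{J}^{(2)}$; each swap edge bridges these components in one of two directions. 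By \cref{lem:adjtrees}, two of the trees are compatible only when their swap edges bridge in opposite directions, but among three or more bridging edges a pigeonhole argument forces two to share a direction, contradicting pairwise compatibility.

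Combining, $G(t)$ is acyclic with at least $d-1$ edges on $d$ vertices, which forces $|E(G(t))| = d-1$ and makes $G(t)$ a spanning tree. The main technical obstacle is the acyclicity step: one must carefully verify that all trees in a hypothetical cycle share the common two-component forest $K$ before applying the compatibility criterion from \cref{lem:adjtrees}.
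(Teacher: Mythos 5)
Your strategy is genuinely different from the paper's. The paper proves acyclicity and then proves connectivity directly, by exhibiting a path between any two vertices via a strictly-growing connected component. You instead bound $|E(G(t))| \geq d-1$ by a degree count and pair it with acyclicity. Your degree count is correct and is a nice argument: the bijection between edges of $G(t)$ at $\bar{j}$ and internal edges $(c,\bar{j})$ of $\mathbb{T}(t+e_{\bar{j}})$ follows from tree linkage, \cref{thm:treedegrees}, and \cref{lem:adjtrees}; and the observation that a single $i$ cannot be a critical leaf for two distinct $\bar{j},\bar{j}'$ --- because the common tope $T_{t+e_{\bar{j}}+e_{\bar{j}'}}$, a left semi-matching inside both trees by \cref{prop:treeunion}, would have to assign $i$ both neighbors --- correctly yields $\sum_{\bar{j}} \ell_{\bar{j}} \leq n$.

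The acyclicity step, however, has a genuine gap. When you write that the swap from $\mathbb{T}(t+e_{\bar{j}_a})$ to $\mathbb{T}(t+e_{\bar{j}_{a+1}})$ ``exchanges $(c_a,\bar{j}_a) \leftrightarrow (c_{a+1},\bar{j}_{a+1})$,'' you are using the single label $c_{a+1}$ both for the edge \emph{added} at $\bar{j}_{a+1}$ in step $a$ and the edge \emph{removed} at $\bar{j}_{a+1}$ in step $a+1$. These are different edges: by \cref{rem:basicproperties}, the near label at $\bar{j}_{a+1}$ on the $\mathbb{G}(t)$-edge toward $\bar{j}_a$ is distinct from the near label on the $\mathbb{G}(t)$-edge toward $\bar{j}_{a+2}$. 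Consequently the forests $\mathbb{T}(t+e_{\bar{j}_a}) \cap \mathbb{T}(t+e_{\bar{j}_{a+1}})$ vary with $a$, so there is no single two-component forest $K$ shared by all trees in the cycle, and the pigeonhole application of \cref{lem:adjtrees} does not go through. This is not merely a verification to be carried out carefully; the claimed common forest simply does not exist. The paper's acyclicity argument avoids this entirely: traverse the cycle once and track only the neighborhood of a single vertex $\bar{j}_0$; the first step deletes near label $i_0$ and the last step adds near label $i'_0$, which are distinct by \cref{rem:basicproperties} since they belong to distinct $\mathbb{G}(t)$-edges at $\bar{j}_0$, so the tree cannot return to its starting state. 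Your degree count, paired with that acyclicity argument, would give a valid alternative proof.
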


\begin{proof}
    We first show that the graph is acyclic. Suppose that there is a cycle involving the vertex $\bar{j}_0$ like $\dots \bar{j}_\ell \xleftrightarrow{i_{\ell}\quad i'_0} \bar{j}_0 \xleftrightarrow{i_0\quad i'_{1}} \bar{j}_1 \dots$ in $\mathbb{G}(t)$. Denote $\mathbb{T}_k = \mathbb{T}(t+e_{\bar{j}_k})$ for convenience, then we can obtain $\mathbb{T}_1, \mathbb{T}_2, \dots, \mathbb{T}_\ell, \mathbb{T}_0 $ from $\mathbb{T}_0$ by repeatedly replacing one edge at a time. Note that the set of neighbors of $\bar{j}$ in this tree $\mathbb{T}_k$ is only modified at the first and last step, where in the first step we removed $i_0$ and in the last step we added $i_0'$. From \cref{rem:basicproperties} we see that $i_0 \neq i_0'$, which means that the final tree cannot be the same as the $\mathbb{T}_0$. Therefore, this cycle cannot exist.

    We now show that the graph is connected by showing that there is a path from $\bar{j}_0$ to $\bar{j}^*$ for any $\bar{j}_0, \bar{j}^*\in [\bar{d}]$. Suppose that in the tree $\mathbb{T}(t+e_{\bar{j}_0})$, the first vertex along the (unique) path from $\bar{j}_0$ to $\bar{j}^*$ is some $i_0\in [n]$. (In other words, $(i_0, \bar{j}_0)$ is an edge of $(\mathbb{T}(t+e_{\bar{j}_0}))^{\leftarrow\bar{j}^*}$.) This means that there is an edge of the form $\bar{j}_0 \xleftrightarrow{i_0\quad i'_1}\bar{j}_1$ in $\mathbb{G}(t)$. If $\bar{j}_1 = \bar{j}^*$ then we are done immediately. 
    Otherwise, note that from \cref{lem:adjtrees}, if $H_0 = \mathbb{T}(t+e_{\bar{j}_0})\setminus (i_0, \bar{j}_0)$, then $\bar{j}_0$ is in the same connected component $H_0^{(1)}$ of $H$ as $i'_1$, and $i_0, \bar{j}_1, \bar{j}^*$ are in the opposite connected component $H_0^{(2)}$. 
    
    Therefore, after adding the edge $(i'_1, \bar{j}_1)$, $i'_1$ is further away from $\bar{j}^*$ than $\bar{j}_1$ in the new tree $\mathbb{T}(t+e_{\bar{j}_1})$, and hence there exists a different $i_1$ adjacent to $\bar{j}_1$ that is closer to $\bar{j}^*$. If we define $H_1^{(1)}$ and $H_1^{(2)}$ as the two connected components of $H_1 = \mathbb{T}(t+e_{\bar{j}_1})\setminus (i_1, \bar{j}_1)$ where the former component contains $\bar{j}_1$, then the vertex set of $H_0^{(1)}$ is a strict subset of that of $H_1^{(1)}$, since the previous edge replacement did not affect connectivity of $H_0^{(1)}$, and this component is now also connected to $\bar{j}_1$ (and possibly some additional vertices). We also have that $\bar{j}^*\in H_1^{(2)}$ by the same argument as with $H_0$.
    
    Hence we can always repeat this process (which gives a path along the graph $\mathbb{G}(t)$), never repeating any $\bar{j}_k$'s (as $\mathbb{G}(t)$ is acyclic), until we reach $\bar{j}_\ell = \bar{j}^*$ for some $\ell$.
\end{proof}

\begin{remark}
    A proof of connectivity of $G(t)$ can also be found in \cite{TriangulationTOM}, Lemma 4.10, using machinery of totally unimodular matrices.
\end{remark}

Similar to semi-matchings and linkage covectors, there is an analogous way to (partially) obtain trees from tree-linkage graphs.

\begin{definition}
    Given a tree-linkage graph $\mathbb{G}(t)$ and $\bar{j}\in [\bar{d}]$, let the \emph{$\bar{j}$-pull} of $\mathbb{G}(t)$ be a subgraph $G\subseteq K_{n,d}$ consisting of the following edges: for each edge of $\mathbb{G}(t)$, if $\xrightarrow{i'}\bar{j}'$ is the end closer to $\bar{j}$, then add the edge $(i', \bar{j}')$ to $G$. We use $\mathbb{G}(t)^{\rightarrow\bar{j}}$ to denote the resulting graph.
\end{definition}

\begin{proposition}\label{prop:pullplustope}
    For a given lattice point $t\in (n-2)\Delta^{d-1}$, let $T_t = \bigcap_{\bar{j}\in [\bar{d}]} \mathbb{T}(t+e_{\bar{j}})$. We have the following: \begin{enumerate}
        \item[(a)] For each $\bar{j}\in[\bar{d}]$, $\mathbb{T}(t+e_{\bar{j}}) = T_t\sqcup \mathbb{G}(t)^{\rightarrow\bar{j}}$.
        \item[(b)] $T_t$ is a tope, i.e. $LD(T_t) = \mathbf{1}_{[n]}$.
    \end{enumerate}
\end{proposition}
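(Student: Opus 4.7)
My plan for (a) is to establish the following per-edge characterization: for each edge $e_0 = \bar{j}' \xleftrightarrow{i'\quad i''} \bar{j}''$ of $\mathbb{G}(t)$, the edge $(i', \bar{j}')$ belongs to $\mathbb{T}(t + e_{\bar{k}})$ if and only if $\bar{k}$ lies in the component of $G(t) \setminus e_0$ containing $\bar{j}'$. The base cases $\bar{k} \in \{\bar{j}', \bar{j}''\}$ follow directly from the construction of $\mathbb{G}(t)$. For the inductive step, as we move to an adjacent vertex $\bar{k}'$ in $G(t)$ via some edge $e_1 \neq e_0$, I would invoke \cref{rem:basicproperties}: whenever $e_1$ is incident to $\bar{j}'$, its near label at $\bar{j}'$ must differ from $i'$, so the single edge swap taking $\mathbb{T}(t+e_{\bar{k}})$ to $\mathbb{T}(t+e_{\bar{k}'})$ never touches $(i', \bar{j}')$. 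Given this characterization, $\mathbb{G}(t)^{\rightarrow \bar{j}}$ is exactly the set of edges of $\mathbb{T}(t+e_{\bar{j}})$ that fail to lie in some other $\mathbb{T}(t+e_{\bar{k}})$, yielding the disjoint decomposition $\mathbb{T}(t+e_{\bar{j}}) = T_t \sqcup \mathbb{G}(t)^{\rightarrow \bar{j}}$.

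For (b), part (a) immediately gives $|T_t| = (n+d-1) - (d-1) = n$. Combining (a) with \cref{rem:basicproperties} yields the local characterization that $(i, \bar{k}) \in T_t$ if and only if $i$ is a leaf of $\mathbb{T}(t+e_{\bar{k}})$ with sole neighbor $\bar{k}$ (because an edge of $\mathbb{G}(t)^{\rightarrow \bar{k}}$ at $\bar{k}$ with label $i$ exists precisely when $i$ is a non-leaf neighbor of $\bar{k}$). Since $\sum_i \deg_i(T_t) = |T_t| = n$, it suffices to show $\deg_i(T_t) \leq 1$ for every $i \in [n]$, because pigeonhole then forces $\deg_i(T_t) = 1$ and hence $LD(T_t) = \mathbf{1}_{[n]}$.

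The upper bound is where the main work lies. My plan is as follows: suppose $i$ is a leaf of $\mathbb{T}(t+e_{\bar{k}_1})$ adjacent to $\bar{k}_1$. Then by \cref{rem:basicproperties}, no edge of $\mathbb{G}(t)$ incident to $\bar{k}_1$ can have near label $i$, as this would force $i$ to be a non-leaf neighbor of $\bar{k}_1$. A persistence argument parallel to the induction used in (a) then shows that $(i, \bar{k}_1)$ survives every swap along any walk in $G(t)$, so $(i, \bar{k}_1) \in \mathbb{T}(t + e_{\bar{k}})$ for every $\bar{k} \in [\bar{d}]$. If also $(i, \bar{k}_2) \in T_t$ for some $\bar{k}_2 \neq \bar{k}_1$, then $i$ would be a leaf of $\mathbb{T}(t+e_{\bar{k}_2})$ with sole neighbor $\bar{k}_2$, yet $\mathbb{T}(t+e_{\bar{k}_2})$ still contains $(i, \bar{k}_1)$, contradicting the leaf assumption at $i$. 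Hence $\deg_i(T_t) \leq 1$, completing the proof.
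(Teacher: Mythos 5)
Your proof is correct and follows essentially the same route as the paper's: both proofs establish part (a) by propagating the membership of label edges along $G(t)$, using \cref{rem:basicproperties} to ensure that a label edge of $e_0$ only changes membership when crossing $e_0$ itself, and both prove part (b) by counting $|T_t| = n$ and showing each $i \in [n]$ has degree at most $1$ in $T_t$ because a degree-$2$ vertex $i$ would force $(i,\bar j)$ to be an internal edge of some $\mathbb{T}(t+e_{\bar j})$, contradicting its persistence. The only cosmetic difference is that you organize (a) around an explicit per-edge characterization of when $(i',\bar j')$ lies in $\mathbb{T}(t+e_{\bar k})$, whereas the paper argues by contradiction by transporting a hypothetical offending edge $(i^*,\bar j^*)$ along $G(t)$ to the vertex $\bar j^*$; for (b) the paper invokes the tree linkage axiom directly rather than passing through the "$i$ is a leaf neighbor" reformulation, but these are the same insight.
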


\begin{proof}
\begin{enumerate}
    \item[(a)] If $\bar{j}$ and $\bar{j}'$ are adjacent in $\mathbb{G}(t)$ via an edge $\bar{j} \xleftrightarrow{i\quad i'}\bar{j}'$, then the $\bar{j}$-pull and $\bar{j}'$-pull differ by exactly one edge $(i, \bar{j})$ and $(i', \bar{j}')$, just like $\mathbb{T}(t+e_{\bar{j}})$ and $\mathbb{T}(t+e_{\bar{j}}')$. If there is an edge $(i^*, \bar{j}^*) \in \mathbb{G}(t)^{\rightarrow\bar{j}} \setminus \mathbb{T}(t+e_{\bar{j}})$, then it is also in $\mathbb{G}(t)^{\rightarrow\bar{j}'} \setminus \mathbb{T}(t+e_{\bar{j}'})$. This argument applies transitively along $\mathbb{G}(t)$ to show that $(i^*, \bar{j}^*)\in \mathbb{G}(t)^{\rightarrow\bar{j}^*} \setminus \mathbb{T}(t+e_{\bar{j}^*})$, which is impossible due to \cref{rem:basicproperties}. Therefore, we see that $\mathbb{G}(t)^{\rightarrow\bar{j}} \subseteq \mathbb{T}(t+e_{\bar{j}})$ for all $\bar{j}$.

    Now let $T^{(\bar{j})} = \mathbb{T}(t+e_{\bar{j}}) \setminus \mathbb{G}(t)^{\rightarrow\bar{j}}$ for each $\bar{j}$. From the same argument as above we see that $T^{(\bar{j})} = T^{(\bar{j}')}$ whenever $\bar{j}$ and $\bar{j}'$ are adjacent in $\mathbb{G}(t)$, and is hence the same set for all $\bar{j}\in [\bar{d}]$, which we denote by $T'_t$. Therefore $T_t$ contains $T'_t$. On the other hand, any edge in $T_t$ cannot be replaced in any tree, and hence will not appear in any $\bar{j}$-pull, so it is also contained in $T'_t$. This means that $T_t = T'_t$, as desired.

    \item[(b)] Since $\mathbb{G}(t)$ is a spanning tree on $[\bar{d}]$, $\mathbb{G}(t)^{\rightarrow\bar{j}}$ has $d-1$ edges and hence $T_t$ has exactly $n$ edges, so it suffices to show that no vertices in $[n]$ has degree greater than $1$ in $T_t$. Indeed, if $i$ is connected to two vertices $\bar{j}$ and $\bar{j}'$ in all trees, then the edge $(i, \bar{j})$ is replaceable in $\mathbb{T}(t+e_{\bar{j}})$ and cannot be in $T_t$.

\end{enumerate}
\end{proof}

\begin{corollary}
    In a fine mixed subdivision of $n\Delta^{d-1}$ and a lattice point $t\in (n-2)\Delta^{d-1}$, the intersection of the $d$ cells whose positions are $t + e_{\bar{j}}$ for some $\bar{j}\in [\bar{d}]$ is exactly a single lattice point.
\end{corollary}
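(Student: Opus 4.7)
The plan is to reduce the geometric intersection statement to the combinatorial statement already proved in \cref{prop:pullplustope}(b), using the graph encoding of faces of fine Minkowski cells.

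First, I would translate the geometric intersection of the $d$ cells into a graph intersection. Let $C_{\bar{j}}$ denote the fine Minkowski cell with position $t + e_{\bar{j}}$, which is encoded by the tree $\mathbb{T}(t + e_{\bar{j}})$. The compatibility condition in the definition of a mixed subdivision says that any two cells intersect properly as Minkowski cells, and the graph encoding of faces (discussed in the paper after the definition of fine mixed subdivisions) gives $G(C\cap C') = G(C) \cap G(C')$ for any two cells $C, C'$ of the subdivision. Iterating this pairwise identity (which is valid because the intersection of two cells is itself a common face of both, hence paired by compatibility with every other cell), the graph encoding of $\bigcap_{\bar{j}\in[\bar{d}]} C_{\bar{j}}$ is exactly $\bigcap_{\bar{j}\in[\bar{d}]} \mathbb{T}(t + e_{\bar{j}}) = T_t$.

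Next, I would invoke \cref{prop:pullplustope}(b), which tells us that $T_t$ is a tope, i.e.\ a left semi-matching with $LD(T_t) = \mathbf{1}_{[n]}$. It remains to identify which face of a fine Minkowski cell corresponds to a left semi-matching. Recall that each face $F$ of a fine Minkowski cell $\sum_i C_i$ factors uniquely as $\sum_i F_i$ with $F_i$ a face of $C_i$, and the vertex set of $F_i$ is exactly the neighbors of $i$ in the encoding graph. The face $F$ is a single lattice point (a vertex of $F$) precisely when each $F_i$ is itself a vertex of $C_i$, which is equivalent to every $i\in [n]$ having exactly one neighbor in the encoding graph — that is, precisely the condition that the graph is a left semi-matching. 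Therefore $T_t$ encodes a single lattice point, and this lattice point is the claimed intersection.

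There is really no main obstacle: the result is essentially a repackaging of \cref{prop:pullplustope}(b) once one has the dictionary between graphs and faces. The only part that requires mild care is confirming that the identity $G(C \cap C') = G(C) \cap G(C')$ extends from pairwise intersections to the $d$-fold intersection, but this is automatic from the face-poset structure of the mixed subdivision (iterated pairwise intersection of cells is still a common face, whose encoding graph is the intersection of the encoding graphs).
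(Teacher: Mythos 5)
Your proposal is correct and matches the paper's intended route: the paper states this corollary immediately after \cref{prop:pullplustope} precisely because it follows by reading that proposition's part (b) through the standard dictionary between subgraphs of $K_{n,d}$ and faces of fine Minkowski cells. You have simply made that dictionary explicit—the intersection of cells is encoded by the intersection of their graphs (via the "intersect properly as Minkowski cells" condition, iterated over all $d$ cells), and a left semi-matching encodes a face whose Minkowski summands are each a single vertex, hence a lattice point.
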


\begin{figure}[h!]
    \centering
    \includegraphics{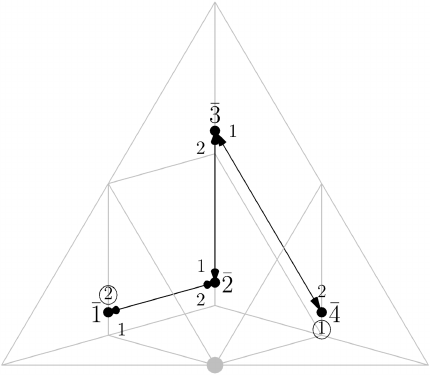}
    \caption{The only tree-linkage covector $\tilde{\mathbb{G}}(\mathbf{0})$ from the fine mixed subdivision of $2\Delta^3$ in \cref{fig:2d3} (reproduced in gray), drawn with vertices coinciding with the position of the corresponding cell. The vertex labels are marked with a circle. The lattice point shared by all four cells is marked with a large gray dot. (Note that this lattice point might not be contained in $t + 2\Delta^{d-1}$ in general.)}\label{fig:2d3linkagegraph}
\end{figure}

\begin{definition}
    Given a fine mixed subdivision and a lattice point $t\in (n-2)\Delta^{d-1}$, define a \emph{tree-linkage covector} $\tilde{\mathbb{G}}(t)$ to be the tree-linkage graph $\mathbb{G}(t)$ where each vertex is additionally labeled by some (zero or more) elements of $[n]$: let $T_t = \bigcap_{\bar{j}\in [\bar{d}]} \mathbb{T}(t+e_{\bar{j}})$, then vertex $\bar{j}$ is labeled with $i$ if and only if $(i, \bar{j})$ is an edge of $T_t$.

    For a vertex $\bar{j}\in [\bar{d}]$, define its \emph{augmented degree} to be the number of incident edges plus the number of vertex labels. Also, we define the \emph{$\bar{j}$-pull} $\tilde{\mathbb{G}}(t)^{\rightarrow \bar{j}} = T_t \sqcup \mathbb{G}(t)^{\rightarrow \bar{j}}$.
\end{definition}

By \cref{prop:pullplustope}, each $i\in [n]$ appears as a vertex label of a tree-linkage covector exactly once, and each tree $\mathbb{T}(t+e_{\bar{j}})$ is exactly recovered by the $\bar{j}$-pull of the covector. 

\begin{proposition}\label{prop:extendeddegree}
    For a lattice point $t\in (n-2)\Delta^{d-1}$, the augmented degree vector of $\tilde{\mathbb{G}}(t))$ is equal to $t + \mathbf{2}_{\bar{d}}$.
\end{proposition}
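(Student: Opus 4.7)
The plan is to directly compute the right-degree at each vertex $\bar{j}$ of the bipartite graph $\tilde{\mathbb{G}}(t)^{\rightarrow \bar{j}}$ in two different ways, and observe that one way gives exactly $t_{\bar{j}} + 2$ while the other gives exactly the augmented degree of $\bar{j}$ in $\tilde{\mathbb{G}}(t)$. The whole argument should be short once \cref{prop:pullplustope} is in hand.

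First, I would recall that by \cref{prop:pullplustope}(a) combined with the definition of the $\bar{j}$-pull of the covector, we have
\[
\tilde{\mathbb{G}}(t)^{\rightarrow \bar{j}} \;=\; T_t \sqcup \mathbb{G}(t)^{\rightarrow \bar{j}} \;=\; \mathbb{T}(t+e_{\bar{j}}).
\]
Since $RD(\mathbb{T}(t+e_{\bar{j}})) = (t + e_{\bar{j}}) + \mathbf{1}_{[\bar{d}]}$, the degree of vertex $\bar{j}$ in this tree is exactly $t_{\bar{j}} + 2$.

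Next I would account for these $t_{\bar{j}} + 2$ edges according to whether they come from $T_t$ or from $\mathbb{G}(t)^{\rightarrow \bar{j}}$. The edges incident to $\bar{j}$ in $T_t$ correspond bijectively to the vertex labels at $\bar{j}$ in $\tilde{\mathbb{G}}(t)$ (by definition of those labels). For the edges contributed by $\mathbb{G}(t)^{\rightarrow \bar{j}}$: the only edges of $\mathbb{G}(t)$ whose pull yields an edge incident to $\bar{j}$ are those incident to $\bar{j}$ itself, and each such edge $\bar{j} \xleftrightarrow{i \ \ i'} \bar{j}'$ contributes precisely the edge $(i,\bar{j})$. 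By \cref{rem:basicproperties}, the near labels at $\bar{j}$ over the distinct edges incident to $\bar{j}$ are all distinct $i$'s, so this contribution equals the ordinary degree of $\bar{j}$ in $\mathbb{G}(t)$. Summing the two contributions gives exactly the augmented degree of $\bar{j}$ in $\tilde{\mathbb{G}}(t)$.

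Putting the two computations together, the augmented degree at $\bar{j}$ equals $t_{\bar{j}} + 2$ for every $\bar{j} \in [\bar{d}]$, which is the claim. I do not anticipate any real obstacle: all the hard combinatorial content (that $\mathbb{G}(t)$ is a tree, that the pulls reconstruct the $\mathbb{T}(t+e_{\bar{j}})$, and that near labels at a fixed vertex are distinct) has already been established in \cref{prop:treelinkagetree}, \cref{prop:pullplustope}, and \cref{rem:basicproperties}. As a sanity check one can verify global consistency: summing over $\bar{j}$, the total augmented degree is $2(d-1) + n$ (from the $d-1$ edges of the spanning tree $\mathbb{G}(t)$ and the $n$ labels of the tope $T_t$), which matches $\sum_{\bar{j}}(t_{\bar{j}}+2) = (n-2) + 2d$.
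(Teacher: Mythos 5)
Your proof is correct, and it takes a genuinely different route from the paper's. The paper observes that the augmented degree at $\bar{j}$ equals the degree of $\bar{j}$ in the union $\bigcup_{\bar{j}'\in[\bar{d}]}\mathbb{T}(t+e_{\bar{j}'})$ (since every near label and vertex label at $\bar{j}$ encodes a distinct edge $(i,\bar{j})$ that appears in at least one of the $d$ trees), and then invokes \cref{prop:largetreeunion} with $m=2$ to identify the right degree vector of that union as $t+\mathbf{2}_{[\bar{d}]}$. You instead work with a \emph{single} tree: you use \cref{prop:pullplustope}(a) to write $\mathbb{T}(t+e_{\bar{j}}) = T_t\sqcup\mathbb{G}(t)^{\rightarrow\bar{j}}$, note that the degree of $\bar{j}$ in $T_t$ is the number of vertex labels and the degree of $\bar{j}$ in $\mathbb{G}(t)^{\rightarrow\bar{j}}$ is the degree of $\bar{j}$ in $\mathbb{G}(t)$ (using distinctness of near labels from \cref{rem:basicproperties}), and then read off $t_{\bar{j}}+2$ from the known right degree vector of $\mathbb{T}(t+e_{\bar{j}})$. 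Your approach is more local and self-contained: it never needs to reason about the union of all $d$ trees or to import the pre-trianguloid machinery of \cref{prop:largetreeunion}, relying only on the already-established decomposition of a single tree and its prescribed degree vector. The paper's route is slightly shorter to state but leans on a heavier external lemma.
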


\begin{proof}
    Given a vertex of tree-linkage covector $\tilde{\mathbb{G}}(t)$, notice that the near label on each incident edge and each vertex label represent an edge that is in at least one of the spanning trees. Therefore, the augmented degree vector is the same as the right degree vector of the union $\bigcup_{\bar{j}\in[\bar{d}]} \mathbb{T}(t + e_{\bar{j}})$. The result then follows from \cref{prop:largetreeunion} where we take $m = 2$.
\end{proof}

We conclude this section by some additional properties of the labels on $\tilde{\mathbb{G}}(t)$ that may be of interest:

\begin{proposition}
    If there is a path in $\tilde{\mathbb{G}}(t)$ of the form \[\bar{j}_0 \xleftrightarrow{i_0\quad i'_1}\bar{j}_1 \xleftrightarrow{i_1\quad i'_2}\bar{j}_2 \dots \bar{j}_{\ell-1} \xleftrightarrow{i_{\ell-1}\quad i'_\ell}\bar{j}_\ell,\] then the following holds: \begin{enumerate}
        \item[(a)] Let $k$ be an integer between $0$ and $\ell-1$ inclusive. In the graph $H_k = \mathbb{T}(t+e_{\bar{j}_k})\setminus (i_k, \bar{j}_k)$, $\bar{j}_0, \dots, \bar{j}_k$ are in a different connected component from $\bar{j}_{k+1}, \dots, \bar{j}_{\ell}$.
        \item[(b)] For any $1\leq k\leq k'\leq \ell - 1$, $i'_{k}\neq i_{k'}$. (In other words, no identical edge labels can point at each other.)
        \item[(c)] If vertex $\bar{j}_k$ is labeled with $i$, then $i_{k'}\neq i$ for all $k' \geq k$ and $i'_{k'}\neq i$ for all $k' \leq k$. (In other words, no edge label can point at an identical vertex label.)
    \end{enumerate}
\end{proposition}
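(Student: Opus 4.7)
My plan is to exploit the decomposition $\mathbb{T}_k := \mathbb{T}(t+e_{\bar{j}_k}) = T_t \sqcup \mathbb{G}(t)^{\rightarrow\bar{j}_k}$ from \cref{prop:pullplustope}, which tells us that adjacent trees $\mathbb{T}_k$ and $\mathbb{T}_{k+1}$ differ by exactly one edge swap: removing $(i_k,\bar{j}_k)$ and inserting $(i'_{k+1},\bar{j}_{k+1})$. Thus the forest $H_k = \mathbb{T}_k\setminus(i_k,\bar{j}_k) = \mathbb{T}_{k+1}\setminus(i'_{k+1},\bar{j}_{k+1})$ splits, by \cref{lem:adjtrees}, into one component $A_k\ni \bar{j}_k, i'_{k+1}$ and another $B_k\ni \bar{j}_{k+1}, i_k$. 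I will prove (a) by showing a monotonicity property for these components and then derive (b) and (c) by tracking a single extra edge.

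For the inductive step $A_{k-1}\subseteq A_k$: the edge $(i_k,\bar{j}_k)$ is already present in $\mathbb{T}_{k-1}$, since it differs from $(i'_k,\bar{j}_k)$ (using $i_k\neq i'_k$ from \cref{rem:basicproperties}) and from $(i_{k-1},\bar{j}_{k-1})$ (since $\bar{j}_k\neq\bar{j}_{k-1}$), hence is not disturbed by the swap $\mathbb{T}_{k-1}\to\mathbb{T}_k$. Because $\bar{j}_k\in B_{k-1}$, this edge lies inside the $B_{k-1}$ component of $H_{k-1}$. Writing $H_k = (H_{k-1}\setminus(i_k,\bar{j}_k))\sqcup(i'_k,\bar{j}_k)$, removing $(i_k,\bar{j}_k)$ splits $B_{k-1}$ into a $\bar{j}_k$-piece and an $i_k$-piece while leaving $A_{k-1}$ untouched, and reinserting $(i'_k,\bar{j}_k)$ — with $i'_k\in A_{k-1}$ by \cref{lem:adjtrees} — fuses $A_{k-1}$ to the $\bar{j}_k$-piece. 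Thus $A_k = A_{k-1}\cup(\bar{j}_k\text{-piece of } B_{k-1})\supseteq A_{k-1}$, and the mirror argument yields $B_0\supseteq B_1\supseteq\cdots\supseteq B_{\ell-1}$. Combined with the trivial inclusions $\bar{j}_{k'}\in A_{k'}\subseteq A_k$ for $k'\leq k$ and $\bar{j}_{k'}\in B_{k'-1}\subseteq B_k$ for $k'>k$, this proves (a).

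Both (b) and (c) now reduce to checking which component of $H_m$ a given edge falls into. For (b) with $k=k'$, the two edges at $\bar{j}_k$ in $\mathbb{G}(t)$ along the path have distinct near labels by \cref{rem:basicproperties}, giving $i'_k\neq i_k$; for $k<k'$, the edge $(i'_k,\bar{j}_k)$ survives every swap $\mathbb{T}_s\to\mathbb{T}_{s+1}$ for $s=k,\dots,k'-1$ (each removed $(i_s,\bar{j}_s)$ has either $\bar{j}_s\neq\bar{j}_k$ or else $s=k$ with $i_s=i_k\neq i'_k$), so $(i'_k,\bar{j}_k)\in H_{k'}$ puts $i'_k$ on the $A_{k'}$ side (along with $\bar{j}_k$, by (a)), disjoint from $i_{k'}\in B_{k'}$. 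For (c), since $T_t\subseteq\mathbb{T}_m$ for all $m$, the edge $(i,\bar{j}_k)$ lies in $H_m$ whenever $m\neq k$; applying this at $m=k'> k$ places $i\in A_{k'}$ disjoint from $i_{k'}\in B_{k'}$ (with the boundary case $k'=k$ handled by noting $(i_k,\bar{j}_k)\notin\mathbb{T}_{k+1}\supseteq T_t$), and at $m=k'-1\leq k-1$ places $i\in B_{k'-1}$ disjoint from $i'_{k'}\in A_{k'-1}$.

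The main obstacle will be the careful component bookkeeping in the monotonicity step for (a): one must verify that $(i_k,\bar{j}_k)\in\mathbb{T}_{k-1}$, that this edge sits on the $B_{k-1}$ side of $H_{k-1}$, and that $i'_k\in A_{k-1}$, all in the correct order so that \cref{rem:basicproperties} and \cref{lem:adjtrees} are invoked before any appeal to (b) or (c). Once the chain $A_0\subseteq\cdots\subseteq A_{\ell-1}$ is in place, the remaining deductions become essentially a lookup of which component each relevant edge lies in.
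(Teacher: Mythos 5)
Your proposal is correct and takes essentially the same approach as the paper: establishing the monotone chain of components of the forests $H_k$ along the path (which the paper obtains by referring back to the connectivity argument of \cref{prop:treelinkagetree}), and then determining for each relevant edge which side of the split it lies on. The only cosmetic difference is that for (b) and (c) you track edge survival across successive swaps directly, whereas the paper reduces to the endpoints of a sub-path and invokes \cref{prop:pullplustope}(a) to place both edges inside a single tree $\mathbb{T}(t+e_{\bar{j}_1})$.
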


\begin{proof}
    (a) follows from the proof of connectivity for \cref{prop:treelinkagetree}. For (b), it suffices to show that $i'_1\neq i_{\ell - 1}$. The cases $\ell = 1$ and $\ell = 2$ are obvious from \cref{rem:basicproperties}, so we assume $\ell \geq 3$. From \cref{prop:pullplustope}(a) we see that $(i'_1, \bar{j}_1)$ and $(i_{\ell-1}, \bar{j}_{\ell-1})$ are both in $\tilde{\mathbb{G}}(t)^{\rightarrow \bar{j}_1} = \mathbb{T}(t+e_{\bar{j}_1})$, and from (a) we also have that $\bar{j}_1$ and $\bar{j}_{\ell-1}$ are in different connected components of $H_1 = \mathbb{T}(t+e_{\bar{j}_1})\setminus (i_1, \bar{j}_1)$, which means that $i'_1\neq i_{\ell-1}$. (c) follows from the same argument as (b).
\end{proof}

In other words, if we look at only the edges labels that are equal to $i$, they must all be oriented away from the unique vertex that is labeled $i$.

\section{The Big Picture}\label{sec:bigpicture}

In \cref{tab:summary} we list our current understanding of the axiomatics of generic tropical oriented matroids. There are two main classes of cryptomorphic objects: ones that only satisfy a property in the first column under ``Properties/Axioms'' (corresponding to pointed linkage matching fields), and ones that satisfy one or more properties that together span both columns (corresponding to generic tropical oriented matroids).

\begin{table}[h!]
\bgroup
\def\arraystretch{1.2}

\begin{tabular}{|cccc|}
\hline
\multicolumn{1}{|c|}{\textbf{Object}} &
  \multicolumn{1}{c|}{\textbf{Elements}} &
  \multicolumn{2}{c|}{\textbf{Properties/Axioms}} \\ \hline
\multicolumn{1}{|c|}{Pointed matching field} &
  \multicolumn{1}{c|}{\begin{tabular}[c]{@{}c@{}}Partial left semi-matchings\\ (Also right semi-matchings)\end{tabular}} &
  \multicolumn{1}{c|}{Linkage} &
   \\ \hline
\rowcolor[HTML]{C0C0C0} 
\multicolumn{1}{|r}{\cellcolor[HTML]{C0C0C0}Reduction $\downarrow$} &
  \multicolumn{1}{l}{\cellcolor[HTML]{C0C0C0}$\uparrow$ Completion} &
  \multicolumn{2}{c|}{\cellcolor[HTML]{C0C0C0}} \\ \hline
\multicolumn{1}{|c|}{Matching stack/ensemble} &
  \multicolumn{1}{c|}{Partial matchings} &
  \multicolumn{1}{c|}{\begin{tabular}[c]{@{}c@{}}Extended left linkage\\ (Closure + Left linkage)\end{tabular}} &
  \begin{tabular}[c]{@{}c@{}}Right\\ linkage\end{tabular} \\ \hline
\rowcolor[HTML]{C0C0C0} 
\multicolumn{1}{|r}{\cellcolor[HTML]{C0C0C0}Iterated amalgamation $\downarrow$} &
  \multicolumn{1}{l}{\cellcolor[HTML]{C0C0C0}$\uparrow$ Sub-matchings} &
  \multicolumn{2}{c|}{\cellcolor[HTML]{C0C0C0}} \\ \hline
\multicolumn{1}{|c|}{Extended tope arrangement} &
  \multicolumn{1}{c|}{} &
  \multicolumn{1}{c|}{Tope compatibility} &
   \\ \cline{1-1} \cline{3-4} 
\multicolumn{1}{|c|}{(Pre-)Trianguloid} &
  \multicolumn{1}{c|}{} &
  \multicolumn{1}{c|}{Combinatorial sector} &
  Hexagon \\ \cline{1-1} \cline{3-4} 
\multicolumn{1}{|c|}{} &
  \multicolumn{1}{c|}{} &
  \multicolumn{1}{c|}{Chow covector} &
   \\ \cline{3-4} 
\multicolumn{1}{|c|}{} &
  \multicolumn{1}{c|}{} &
  \multicolumn{1}{c|}{$\Delta$-linkage} &
   \\ \cline{3-4} 
\multicolumn{1}{|c|}{\multirow{-3}{*}{}} &
  \multicolumn{1}{c|}{\multirow{-5}{*}{\begin{tabular}[c]{@{}c@{}}Left semi-matchings\\ (Topes)\end{tabular}}} &
  \multicolumn{2}{c|}{$\nabla$-linkage} \\ \hline
\rowcolor[HTML]{C0C0C0} 
\multicolumn{1}{|r}{\cellcolor[HTML]{C0C0C0}$\Delta$-linkage covectors $\downarrow$} &
  \multicolumn{1}{l}{\cellcolor[HTML]{C0C0C0}$\uparrow$ Pulling} &
  \multicolumn{2}{c|}{\cellcolor[HTML]{C0C0C0}} \\ \hline
\multicolumn{1}{|c|}{} &
  \multicolumn{1}{c|}{} &
  \multicolumn{2}{c|}{Tree compatibility} \\ \cline{3-4} 
\multicolumn{1}{|c|}{\multirow{-2}{*}{\begin{tabular}[c]{@{}c@{}}Fine mixed subdivision\\ (or triangulation)\end{tabular}}} &
  \multicolumn{1}{c|}{\multirow{-2}{*}{Spanning trees}} &
  \multicolumn{2}{c|}{Tree linkage} \\ \hline
\rowcolor[HTML]{C0C0C0} 
\multicolumn{1}{|r}{\cellcolor[HTML]{C0C0C0}Subgraphs $\downarrow$} &
  \multicolumn{1}{l}{\cellcolor[HTML]{C0C0C0}$\uparrow$ Maximal graphs} &
  \multicolumn{2}{c|}{\cellcolor[HTML]{C0C0C0}} \\ \hline
\multicolumn{1}{|c|}{\begin{tabular}[c]{@{}c@{}}(Extended) Generic \\ tropical oriented matroid\end{tabular}} &
  \multicolumn{1}{c|}{\begin{tabular}[c]{@{}c@{}}Acyclic subgraphs\\ (Generated by elimination)\end{tabular}} &
  \multicolumn{2}{c|}{Type compatibility} \\ \hline
\end{tabular}

\egroup
\bigskip

\caption{A summary of the relationships between different combinatorial objects mentioned in this paper and their properties. Properties that span the same set of columns are equivalent to each other.}\label{tab:summary}
\end{table}

With the exception of generic tropical oriented matroids, one important commonality of the objects in this paper is that they require there to be exactly one graph for each possible left/right degree vector (or each pair of left and right degree vectors, in the case of matching stacks). This condition is absent for generic tropical oriented matroids, but is instead generated from the boundary axiom and the elimination axiom. It remains to be seen whether there is any direct connection between the elimination axiom and some formulation of the linkage axiom (such as the one suggestively named ``Elimination" in \cref{thm:linkageequiv}).

\subsection{Future directions}

There are several potential ways to generalize the work of this paper: \begin{itemize}
    \item Instead of considering fine mixed subdivisions of $P = n\Delta^{d-1}$, one can replace each summand of the Minkowski sum with a face of $\Delta^{d-1}$. The resulting polytope will be a \emph{generalized permutohedron}, whose fine mixed subdivisions biject to triangulations of a \emph{root polytope} (\cite{Permutohedra}, Section 12). This generalization translates to considering acyclic subgraphs of a particular subgraph $H\subseteq K_{n,d}$. We note that the theory of trianguloids is already generalized to this case (\cite{Trianguloids}, Section 5), and a generalization of tropical oriented matroids has been conjectured in \cite{TriangulationTOM}.
    \item The argument in \cref{sec:equivaxiom} suggests that there may be some sort of symmetry between $\Delta$-linkage and $\nabla$-linkage originating from left and right linkage, even though in the $P = n\Delta^{d-1}$ case the latter implies the former. This motivates considering fine mixed subdivisions of $P = n\Delta^{d-1} + m\nabla^{d-1}$ where this symmetry might be more apparent. This Minkowski sum is also related to the Tutte polynomial of a (poly)matroid (\cite{RootTutte, TutteLattice, UniversalTutte}).
    \item It might also be possible to generalize to \emph{non-fine} mixed subdivisions, where the corresponding graphs no longer need to be acyclic. 
\end{itemize}

\bigskip

\paragraph{\bfseries Acknowledgments.} 
The author would like to thank Alex Postnikov for introducing the author to the problem and their continued support. The author would also like to thank Georg Loho for many motivating discussions and useful references.

\bibliographystyle{alpha}
\bibliography{refs}

\end{document}